\documentclass[11pt,leqno]{amsart}
\usepackage{amssymb}
\usepackage{amsfonts}
\usepackage{amsthm}
\usepackage{mathtools}
\usepackage{todonotes}

\usepackage{amsthm}

\usepackage[pagebackref,colorlinks,citecolor=blue,linkcolor=blue]{hyperref}

\usepackage{cite}   

\usepackage{amsmath,bbm,amssymb,amsxtra}

\makeatother
\makeatletter
\let\Enumerate=\enumerate
\renewcommand{\enumerate}{\Enumerate%
	\setlength{\@topsep}{0pt}
	\setlength{\itemsep}{0pt}%
	\setlength{\parskip}{0pt plus 1pt}%
	\renewcommand{\theenumi}{\textup{(\alph{enumi})}}%
	\renewcommand{\labelenumi}{\theenumi}%
}
\let\endEnumerate=\endenumerate
\renewcommand{\endenumerate}{\endEnumerate\unskip}

\def\Xint#1{\mathchoice
	{\XXint\displaystyle\textstyle{#1}}
	{\XXint\textstyle\scriptstyle{#1}}
	{\XXint\scriptstyle\scriptscriptstyle{#1}}
	{\XXint\scriptscriptstyle\scriptscriptstyle{#1}}
	\!\int}

\def\XXint#1#2#3{{\setbox0=\hbox{$#1{#2#3}{\int}$ }
		\vcenter{\hbox{$#2#3$ }}\kern-.6\wd0}}
\def\dashint{\Xint-}

\newtheorem{theorem}{Theorem}[section]
\newtheorem{lemma}[theorem]{Lemma}
\newtheorem{proposition}[theorem]{Proposition}

\newtheorem{corollary}[theorem]{Corollary}

\numberwithin{equation}{section}

\theoremstyle{definition}
\newtheorem{example}[theorem]{Example}

\DeclareMathOperator{\capp}{cap}
\newcommand{\caps}{\capp_{s,1}}
\newcommand{\loc}{_{\rm loc}}

\DeclareMathOperator*{\esssup}{ess\,sup}

\DeclareMathOperator{\diam}{diam}
\DeclareMathOperator{\supp}{spt}

\newcommand{\BV}{\mathrm{BV}}

\title[Weak Harnack inequality and Cartan property]{Weak Harnack inequality and Cartan property for   nonlocal $W^{s,1}$-minimizers}

\author{Panu Lahti} 
\address{Academy of Mathematics and Systems Science, Chinese Academy of Sciences, Beijing 100190, PR China}\email{panulahti@amss.ac.cn}
\author{Yuxin Li} 
\address{Academy of Mathematics and Systems Science, Chinese Academy of Sciences, Beijing 100190, PR China}\email{ liyuxin@amss.ac.cn}
\author{Khanh Nguyen} 
\address{Academy of Mathematics and Systems Science, Chinese Academy of Sciences, Beijing 100190, PR China}\email{  khanhnguyen@amss.ac.cn}

\subjclass[2020]{  
	35R11, 
	46E36 
	\\
	Key words and phrases: weak Harnack inequality,  nonlocal superminimizer, semicontinuity, Cartan property, thinness, metric measure space}

\begin{document}
	
	\maketitle
	\begin{center}
		March 22, 2026
	\end{center}

	\begin{abstract}
		We establish a 
		weak Harnack inequality for  nonlocal
		$W^{s,1}$-subminimizers 
		in a complete, connected, doubling metric
		measure space where $0<s<1$. As a
		corollary, we prove that $W^{s,1}$-subminimizers are semicontinuous,
		up to a suitable choice of pointwise representative.
		We then prove \emph{Cartan-type properties}
		for $W^{s,1}$-superminimizers.
		The theory turns out to be mostly analogous with the local case
		of $\BV$ super- and subminimizers.
		Our results seem to be new even in the classical Euclidean setting.
	\end{abstract}
	
	\tableofcontents
	
	\section{Introduction}
	
	In 2016, Di Castro--Kuusi--Palatucci \cite{DKP16} studied a weak Harnack inequality for the following problem:
	for an open set $\Omega$ in $\mathbb R^n$ and
	$f\in W^{s,p}(\mathbb R^n)$, where $n\in\mathbb N$, $0<s<1$, and $1<p<\infty$,
	\begin{equation}\label{eq1.1-14oct}
		\begin{cases}
			(-\Delta)^s_p u\, \leq\,  0&\ \ {\rm in\ \ }\Omega,\\
			u=f &\ \ {\rm in\ \ } \mathbb R^n\setminus\Omega,
		\end{cases}
	\end{equation}
	where the symbol $(-\Delta)^s_p$ denotes the standard fractional $p$-Laplacian operator.
	Here $W^{s,p}(A),$ where
	$A\subset\mathbb R^n$ is measurable, denotes  the collection of all $p$-integrable functions
	$f$ on $A$ for which the following semi-norm
	\[
	\|u\|_{\dot W^{s,p}(A)}:= \left(\int_{A}\int_{A}\frac{|u(x)-u(y)|^p}{|x-y|^{n+ps}}\,dx\,dy\right)^{\frac{1}{p}}
	\]
	is finite. Denote by $W^{s,p}_0(A)$ the space consisting of all functions $u$ in $W^{s,p}(\mathbb R^n)$ for which $u=0$
	almost everywhere (a.e.) on $\mathbb R^n\setminus A$. We say that $u$ is a weak subsolution to
	the problem \eqref{eq1.1-14oct} if $u=f$ a.e. on $\mathbb R^n\setminus\Omega$ and for every
	nonnegative function $\eta\in W^{s,p}_0(\Omega)$,
	\[
	\int_{\mathbb R^n}\int_{\mathbb R^n} \frac{|u(x)-u(y)|^{p-2}(u(x)-u(y))(\eta(x)-\eta(y))}{|x-y|^{n+ps}}\,dx\,dy\geq 0.
	\]
	By \cite[Theorem 1.1]{DKP16}, for every $0<\delta<1$ and
	every weak subsolution to the problem \eqref{eq1.1-14oct}, there is a constant $C>0$ depending only on $n,p,s$ such that for all balls $B(x_0,r)\subset\Omega$ with radius $r>0$ and center at $x_0$, 
	\begin{align*}
		&\quad \esssup_{y\in B(x_0,r/2)} u(y) \\
		&\leq \delta \cdot {\rm Tail}{(u_+,x_0,r/2)} + C\cdot \delta^{-\frac{(p-1)n}{sp^2}} \left( \frac{1}{|B(x_0,r)|}\int_{B(x_0,r)} u_+^p(x)\,dx \right)^{\frac{1}{p}},
	\end{align*}
	where $u_+:=\max\{u,0\}$ and 
	\[
	{\rm Tail}(u,x,r):= \left( r^{ps}\int_{\mathbb R^n\setminus B(x,r)} \frac{|u(y)|^{p-1}}{|x-y|^{n+ps}}\,dy \right)^{\frac{1}{p-1}}.
	\]
	
	We have that $u$ is a weak subsolution to  the problem \eqref{eq1.1-14oct} if and only if $u$ is 
	subminimizer of the following functional
	\begin{equation}\label{eq1.2-7dec}\notag
		F_{s,p}(u):= \left( \int_\Omega\int_\Omega +2\int_\Omega\int_{\Omega^c} \right) \frac{|u(x)-u(y)|^{p}}{|x-y|^{n+ps}}\,dx\,dy,
	\end{equation}
	i.e. $F_{s,p}(u)\leq F_{s,p}(u+\eta)$ for every nonpositive function $\eta\in W^{s,p}_0(\Omega)$,
	see \cite[Theorem 2.3]{DKP16}. Here $\Omega^c:=\mathbb R^n\setminus \Omega$.
	The aim of this paper is to study a weak Harnack inequality for subminimizers, or alternatively
	weak subsolutions to the problem \eqref{eq1.1-14oct}, in the case of $p=1$. 

	It is natural to consider the problem in the more general setting of metric measure spaces $(X,d,\mu)$.
	Let $\Omega\subset X$ be an open set.
	For $0<s<1$, we consider the following functional $F$ of two measurable functions $u,v$:
	\begin{align*}
		F(u,v)
		:= \left( \int_\Omega\int_\Omega +2 \int_\Omega\int_{\Omega^c} \right) \frac{|u(x)-u(y)| 
			-|v(x)-v(y)| }{\mu(B_{x,y})d(x,y)^s}\,d\mu(x)\,d\mu(y),
	\end{align*}
	where $B_{x,y}:=B(x,d(x,y))\cup B(y,d(y,x))$ and $\Omega^c:=X\setminus\Omega$.
	See Subsection \ref{sec2.1} for more discussion on this functional.  A measurable
	function $u:X\to[-\infty,\infty]$ is said to be a \emph{nonlocal $W^{s,1}$-sub(super)minimizer}
	{in $\Omega$,}
	or shortly an \emph{$s$-sub(super)minimizer} 
	{in $\Omega$,}
	if
	$u\in \dot W^{s,1}(\Omega)$ and for every nonpositive (nonnegative) function
	$\eta\in W^{s,1}_0(\Omega)$, we have
	\begin{equation}\notag
		F(u,u+\eta)\leq 0.
	\end{equation}
	{It is an \emph{$s$-minimizer} if the above holds for every $\eta\in W^{s,1}_0(\Omega)$.}
	Here $W^{s,1}_0(\Omega)$ and $\dot W^{s,1}(\Omega)$ are defined as in
	Subsection \ref{sec2.1}.
	
	We denote by $B(x,r)$ the open ball with radius $r>0$ and center at $x\in X$.
	We say that $(X,d,\mu)$ is a metric measure space equipped with a \emph{doubling measure}  if there exist a doubling constant $C_\mu>0$ such that
	$
	0<\mu(B(x,r))<\infty 
	$
	and
	$
	\mu(B(x,2r))\leq C_\mu \mu(B(x,r)) 
	$
	for all balls $B(x,r)$. A positive real constant $Q>0$ is called a \emph{doubling dimension} if there is a constant $c>0$ such that
	\begin{equation}\notag
		{\frac{\mu(B(x,r))}{\mu(B(x,R))}\geq 
			c
			\left( \frac{r}{R}\right)^Q}
	\end{equation}
	for all $0<r<R$ and $x\in X$.

	Our first main result is a weak Harnack inequality for  subminimizers 
	{in a metric space.}
	This extends the weak Harnack inequality
	\cite[Theorem 1.1]{DKP16} to the case $p=1$.
	
	\begin{theorem}[Weak Harnack inequality]
		\label{thm1}
		Let $(X,d,\mu)$ be a complete, connected metric measure space equipped with a doubling measure.
		Let $0<s<1$ and let $Q$ be a doubling dimension such that $Q>s$.
		Let $\Omega\subset X$ be a nonempty open set and let $u$ be an $s$-subminimizer 
		in $\Omega$.
		
		Then there exists a constant $C>0$ depending only on $s$ and the doubling constant $C_\mu$ such that for every $k_0\in\mathbb R$,
		every $x_0\in\Omega$, and all $0<r<R$ with $B(x_0,R)\subset\Omega$,
		\[
		\esssup_{y\in B(x_0,r)}(u(y)-k_0)
		\le C\left(\frac{R}{R-r}\right)^Q
		\dashint_{B(x_0,R)}(u-k_0)_+\,d\mu.
		\]
		
	\end{theorem}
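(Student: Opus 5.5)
The plan is a De Giorgi iteration. Its two ingredients are a Caccioppoli-type energy estimate for truncations of $u$, valid for every level, and a fractional Sobolev--Poincaré inequality on balls with exponent $Q/(Q-s)$; the latter is available in doubling spaces precisely because $Q>s$. Replacing $u$ by $u-k_0$ (which is again an $s$-subminimizer, since $F$ only sees differences), we may assume $k_0=0$. For a level $k\ge 0$ write $w_k:=(u-k)_+$.

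\textbf{Step 1 (Caccioppoli).} Fix $r\le \rho'<\rho\le R$ and a Lipschitz cutoff $\varphi$ with $0\le\varphi\le 1$, $\varphi=1$ on $B(x_0,\rho')$, $\supp\varphi\subset B(x_0,\rho)$ and $\mathrm{Lip}\,\varphi\le 2/(\rho-\rho')$. Test subminimality with the nonpositive function $\eta:=-\varphi w_k\in W^{s,1}_0(\Omega)$, so that $F(u,u-\varphi w_k)\le 0$. Then split the double integral into pairs $(x,y)$ both in $B(x_0,\rho)$, and pairs with one point outside.
\begin{itemize}
\item For pairs inside, the pointwise identity for $|a-b|$ under truncation gives a lower bound by $\int\!\!\int_{B_{\rho'}\times B_{\rho'}}|w_k(x)-w_k(y)|\,K$ (with $K=1/(\mu(B_{x,y})d(x,y)^s)$). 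The error is controlled by $\int\!\!\int |\varphi(x)-\varphi(y)|\,w_k\,K$.
\item For pairs with $y\notin B(x_0,\rho)$, the increase of $|u(x)-u(y)|$ is at most $\varphi(x)w_k(x)$. This is the special feature of $p=1$: no power of $u(y)$ enters, so no tail term appears.
\end{itemize}
Both errors are then bounded by $w_k(x)$ times $\int_{X\setminus B(x,\cdot)}K(x,y)\,d\mu(y)\lesssim(\rho-\rho')^{-s}$, using the doubling property and summing over dyadic annuli. The result should be
\[
\int_{B_{\rho'}}\!\int_{B_{\rho'}}\frac{|w_k(x)-w_k(y)|}{\mu(B_{x,y})d(x,y)^s}\,d\mu\,d\mu\le \frac{C}{(\rho-\rho')^{s}}\int_{B_\rho}w_k\,d\mu .
\]
I expect this to be the main obstacle. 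The difficulty is getting the bookkeeping of the $p=1$ truncation identity right, especially the cross terms where $u(y)>k$ and $\varphi(y)\neq\varphi(x)$. If needed, I would instead pass to superlevel sets $\{u>t\}$ via the nonlocal coarea formula and compare perimeters of $\{u>t\}$ and $\{u>t\}\setminus\operatorname{supp}\varphi$-type competitors.

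\textbf{Step 2 (Sobolev).} Combine Step 1 with the fractional Sobolev inequality on $B_{\rho'}$, applied to $w_k$ (or to $w_k\psi$ for a further cutoff, to handle the mean value). This should give
\[
\Big(\dashint_{B_{\rho'}} w_k^{Q/(Q-s)}\Big)^{(Q-s)/Q}\le C\Big(\frac{\rho}{\rho-\rho'}\Big)^{s}\dashint_{B_\rho}w_k .
\]

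\textbf{Step 3 (Iteration).} Set $k_j:=d(1-2^{-j})$, $\rho_j:=r+(R-r)2^{-j}$ and $Y_j:=\dashint_{B_{\rho_j}}w_{k_j}$. Since $w_{k_{j+1}}\le w_{k_j}$ and $w_{k_{j+1}}>0$ forces $w_{k_j}>d2^{-j-1}$, Chebyshev gives $\mu(\{w_{k_{j+1}}>0\}\cap B_{\rho_{j+1}})/\mu(B_{\rho_{j+1}})\le C2^{j}Y_j/d$. Hölder applied with Step 2 (levels $k_{j+1}$, radii $\rho_{j+1}<\rho_j$, with an intermediate radius if needed) then yields
\[
Y_{j+1}\le C\,2^{bj}\Big(\frac{R}{R-r}\Big)^{s}d^{-s/Q}\,Y_j^{1+s/Q}.
\]
The standard fast-geometric-convergence lemma gives $Y_j\to0$ provided $Y_0\le c\,(R/(R-r))^{-Q}d$. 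Choose $d:=C(R/(R-r))^{Q}\dashint_{B_R}u_+$, which is admissible since doubling bounds $Y_0$ by $C$ times this average. Then $(u-d)_+=0$ a.e.\ on $B(x_0,r)$, which is the claim; the exponent $Q$ arises as $s\cdot(Q/s)$.
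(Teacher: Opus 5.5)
Your proposal is correct and follows the paper's proof. The shared ingredients are a Caccioppoli estimate from testing with $-\varphi(u-k)_+$ (no tail term because $p=1$), the fractional Sobolev--Poincar\'e inequality \eqref{Poincare-local-2} applied to the cut-off truncation, Chebyshev plus H\"older, and De Giorgi iteration with $k_n=k_0+d(1-2^{-n})$, $\rho_n=r+2^{-n}(R-r)$. Use the cut-off version: the right-hand side of \eqref{Poincare-local-2} involves $\dot W^{s,1}(2B)$, which the bare $B_{\rho'}\times B_{\rho'}$ estimate does not control.

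The only difference is how the energy estimate is obtained. The paper passes through the first variation (Lemma \ref{lem1.3-1oct}) and then does sign-function bookkeeping. Your direct comparison of $|u(x)-u(y)|$ with $|v(x)-v(y)|$, $v=u-\varphi w_k$, also works and is simpler. It rests on the pointwise bound $|u(x)-u(y)|-|v(x)-v(y)|\ge|\varphi w_k(x)-\varphi w_k(y)|-2\min\{w_k(x),w_k(y)\}|\varphi(x)-\varphi(y)|$, which you should state explicitly.
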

	
	The weak Harnack inequality that we establish in Theorem~\ref{thm1} does not involve
	any tail term. If we additionally assume that $\mu$ supports a $1$-Poincar\'e inequality, see the definition in Subsection \ref{sec2.2},
	then the constant $C$ in Theorem \ref{thm1} can be taken to be $(C'/s)^{Q/s}$, which is uniformly
		bounded as $s\to 1^-$;
	here $C'>0$ depends only on the doubling and Poincar\'e constants, see Theorem \ref{thm1-20nov}.
	This stability of the constant is essentially obtained
	from the sharp fractional Poincar\'e inequality proved recently in \cite{KLLZ25}.
	
	As a consequence of 
	the weak Harnack inequality, with some further work,
	one usually obtains the continuity of solutions. By  \cite[Theorem 1.2]{DKP16}, 
	any solution to the problem \eqref{eq1.1-14oct} is locally H\"older continuous on $\Omega$.  This result is a counterpart to the H\"older regularity theory of 
	$p$-harmonic functions $u$ when $1<p<\infty$, i.e. $-\Delta_pu=0$, see for instance \cite{BB11,HKM06}. Such continuity  is too much to hope for in the fractional case because 
	there is an $s$-minimizer ${\bf 1}_{S}$ , $S\subset \mathbb{R}^2$,
	in an open set
	on the plane for which
	the boundary
	$\partial S$ is a nonlocal minimal surface, see \cite{CRS10,Kl25} for a discussion
	on nonlocal minimal surfaces. 
	Nevertheless, using the weak Harnack inequality of Theorem \ref{thm1},
	we can obtain \emph{semicontinuity} of 
	{$s$-sub(super)minimizers,}
	as in the corollary below. 
	
	Let $u:X\to [-\infty,\infty]$.
	The \emph{lower and upper approximate limits} of $u$ at $x\in X$
	are defined respectively by 
	\begin{equation}\label{eq3.19-2011}
		u^\wedge(x):= \sup\left\{ t\in\mathbb R: \lim_{r\to 0^+}\frac{\mu(\{y\in B(x,r): u(y)<t\}) }{\mu(B(x,r))}=0 \right\}
	\end{equation}
	and
	\begin{equation}\label{eq3.20-2011}
		u^\vee(x):= \inf\left\{ t\in\mathbb R: \lim_{r\to 0^+}\frac{\mu( \{y\in B(x,r): u(y)>t\}) }{\mu(B(x,r))}=0 \right\}.
	\end{equation}
	
	\begin{corollary}\label{cor3.3-16oct}
		Let $(X,d,\mu)$ be a complete, connected metric measure space equipped with a doubling measure. Let $0<s<1$. Let $\Omega\subset X$ be a nonempty open set. 
		Then $u^\wedge$ is lower semicontinuous on $\Omega$ for every $s$-superminimizer $u$ in $\Omega$, and $u^\vee$ is upper semicontinuous on $\Omega$ for every $s$-subminimizer $u$ in $\Omega$.
	\end{corollary}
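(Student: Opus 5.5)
The plan is to prove the statement for $s$-subminimizers and deduce the superminimizer case by symmetry. If $u$ is an $s$-superminimizer, then $-u$ is an $s$-subminimizer: $F(-u,-u+\eta)=F(u,u-\eta)$, and $-\eta$ is nonnegative whenever $\eta$ is nonpositive. Moreover $(-u)^\vee=-u^\wedge$ directly from \eqref{eq3.19-2011} and \eqref{eq3.20-2011}. So let $u$ be an $s$-subminimizer in $\Omega$, fix $x\in\Omega$, and show
\[
\limsup_{z\to x}u^\vee(z)\le u^\vee(x).
\]
There is nothing to prove if $u^\vee(x)=+\infty$, so assume $u^\vee(x)<\infty$. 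Fix a real number $t>u^\vee(x)$. By the definition of $u^\vee$ and monotonicity in $t$,
\[
\frac{\mu(\{y\in B(x,r):u(y)>t\})}{\mu(B(x,r))}\to 0 \quad\text{as } r\to 0.
\]
The key quantity is then $\dashint_{B(x,r)}(u-t)_+\,d\mu$. Smallness of the superlevel set alone does not make this average small, so we need integrability: $u$ should be locally integrable near $x$. This can be deduced from $u\in \dot W^{s,1}(\Omega)$ and a fractional Poincaré-type argument. Alternatively, apply Theorem~\ref{thm1} with $k_0=t$ to get local boundedness from above first. Even without that, we can truncate: set $\tilde u=\min\{u,t+1\}$. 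I expect truncation from above of a subminimizer to remain a subminimizer, as in the local BV case, and this would be checked by comparison. A cleaner route avoids truncation. Apply Theorem~\ref{thm1} with $k_0=t$ on a fixed ball $B(x,R_0)\Subset\Omega$ with radius $r=R_0/2$. This shows $u\le t+M$ a.e.\ on $B(x,R_0/2)$ for some finite $M$, provided $(u-t)_+\in L^1(B(x,R_0))$. That integrability comes from $u\in L^1_{\rm loc}(\Omega)$, which I take to be part of the setting of Subsection~\ref{sec2.1}.

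Next, on small balls $B(x,2\rho)$ with $2\rho<R_0/2$, we have $(u-t)_+\le M\mathbf 1_{\{u>t\}}$. Hence
\[
\dashint_{B(x,2\rho)}(u-t)_+\,d\mu\le M\frac{\mu(\{y\in B(x,2\rho):u(y)>t\})}{\mu(B(x,2\rho))}=:M\varepsilon(\rho)\to0.
\]
Apply Theorem~\ref{thm1} with $R=2\rho$ and $r=\rho$ (the factor $(R/(R-r))^Q=2^Q$ is harmless) to get $\esssup_{B(x,\rho)}u\le t+C2^QM\varepsilon(\rho)$.

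Finally, for any $z\in B(x,\rho)$, small balls around $z$ lie in $B(x,\rho)$, where $u\le t+C2^QM\varepsilon(\rho)$ a.e. Therefore $u^\vee(z)\le t+C2^QM\varepsilon(\rho)$. Letting $z\to x$ and then $\rho\to0$ gives $\limsup_{z\to x}u^\vee(z)\le t$. Letting $t\downarrow u^\vee(x)$ yields upper semicontinuity.

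The main obstacle is the integrability and local boundedness step, i.e.\ making sure the right-hand side of Theorem~\ref{thm1} is finite on some ball around $x$. The first thing to try is to use that $u\in L^1_{\rm loc}(\Omega)$ as part of the definition of $\dot W^{s,1}(\Omega)$ in Subsection~\ref{sec2.1}. If that is not included, one can instead derive local integrability of $u_+$ from the finiteness of the double integral restricted to $B(x,R_0)\times B(x,R_0)$, by comparing $u$ with its values on a set of positive measure where it is finite.
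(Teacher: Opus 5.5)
Your proof is correct and is essentially the paper's argument, mirrored to the subminimizer case. The steps match: local integrability via \eqref{eq2.1-11dec} (exactly the comparison argument you sketch as a fallback), then Theorem~\ref{thm1} once for local boundedness and again on shrinking balls at a level $t$ near the approximate limit. The only cosmetic difference is that the paper takes $t=u^\wedge(x)$ and splits off an $\varepsilon$-layer $\{t-\varepsilon\le u<t\}$, whereas you take $t>u^\vee(x)$ strictly and let $t\downarrow u^\vee(x)$; the truncation detour is not needed.
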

	
	A set $A\subset X$
	is said to be \emph{$s$-thin} at $x\in X$ if 
	\[
	\lim_{r\to0^+} \frac{\caps(A\cap B(x,r), B(x,2r))}{\caps(B(x,r),B(x,2r))} =0.
	\] 
	Here the condenser $(s,1)$-capacity ${\rm cap}_{s,1}$ is defined as in Subsection \ref{subsection5.1}.
	
	With the help of the weak Harnack inequality of Theorem \ref{thm1},
	we are able to prove the following \emph{Cartan-type properties} for superminimizers.
	
	\begin{theorem}[Weak Cartan property]\label{weakCartan-main}
		Let $(X,d,\mu)$ be a complete, connected metric measure space equipped with a doubling measure.  
		Let $0<s<1$.
		Let $A\subset X$ be $s$-thin at $x\in \overline A \setminus A$. 
		Then there exist
		$k\in\mathbb N$ depending only on $s$ and the doubling constant, $R>0$, and
		sets $ E_0, E_1, \ldots, E_{k-1}\subset X$ that satisfy the following properties:
		\begin{enumerate}
			\item \label{a-item-} each ${\bf 1}_{E_{j}}$ is an $s$-superminimizer in $B(x,R)$ for each $j=0,\ldots,k-1$; 
			
			\item \label{b-item-} ${\bf 1}_{E_{j}}\in W^{s,1}(X)$ for each $j=0,\ldots,k-1$;
			
			\item \label{c-item-} $\max\{{\bf 1}_{E_0}^\wedge, {\bf 1}_{E_2}^{\wedge},\,\ldots, {\bf 1}_{E_{k-1}}^\wedge\}\equiv 1$ on $A\cap B(x,R)$;
			
			\item \label{d-item-} ${\bf 1}_{E_0}^\vee(x)={\bf 1}_{E_2}^\vee(x)=\ldots={\bf 1}_{E_{k-1}}^\vee(x)=0$;
			
			\item \label{e-item-} the set 
			\[
			\left\{y\in X: \max_{0\leq j\leq k-1} {\bf 1}_{E_j}^\vee(y)>0\right\}
			\]
			is $s$-thin at $x$.
		\end{enumerate}
	\end{theorem}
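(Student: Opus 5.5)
We follow the strategy of the local BV weak Cartan property. The construction uses thinness at many dyadic scales and then groups the scales into $k$ residue classes so that the pieces are well separated.

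\emph{Dyadic decomposition and choice of scales.} Set $r_i:=2^{-i}R$ and $A_i:=A\cap (\overline{B(x,r_i)}\setminus B(x,r_{i+1}))$. Thinness and the comparison $\caps(B(x,r),B(x,2r))\simeq \mu(B(x,r))r^{-s}$ (from doubling) give $\caps(A_i\cap B(x,r_i),B(x,2r_i))\le \varepsilon_i\,\mu(B(x,r_i))r_i^{-s}$ with $\varepsilon_i\to0$. Choosing $R$ small, we may assume all $\varepsilon_i$ are as small as we like.

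\emph{Construction of each $E_j$.} For each $i$, we would pick a set $F_i\supset A_i$, with $A_i$ contained in the measure-theoretic interior of $F_i$, such that $F_i\subset B(x,2r_i)$ and $\|{\bf 1}_{F_i}\|_{\dot W^{s,1}}\lesssim \varepsilon_i\mu(B(x,r_i))r_i^{-s}$. This should come from a coarea argument for $W^{s,1}$ applied to a near-optimal capacity test function, together with $W^{s,1}$-quasicontinuity if needed to get the interior points. We then replace $F_i$ by a solution of an obstacle problem: minimize the $W^{s,1}$ energy among sets $E\supset F_i$ with $E\subset B(x,2r_i)$. A truncation/coarea argument shows a set minimizer exists, so ${\bf 1}_{E}$ is an $s$-superminimizer in $X\setminus \overline{F_i}$ and in a neighborhood where the obstacle is inactive. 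Next we fix $k$ large and set $E_j:=\bigcup_{i\equiv j \bmod k} \tilde E_i$, where $\tilde E_i$ is the obstacle solution. Then (b) follows from summability, once we arrange $\sum_i\varepsilon_i\mu(B(x,r_i))r_i^{-s}<\infty$ along the chosen subsequence, and (c) holds by construction. For (d) and (e) we use Theorem~\ref{thm1} applied to $1-{\bf 1}_{E_j}$ (equivalently, the weak Harnack inequality for subminimizers) together with the small measure of $E_j\cap B(x,r)$: the density estimate $\mu(\tilde E_i)\lesssim (\varepsilon_i)^{Q/(Q-s)}\mu(B(x,r_i))$ comes from the fractional isoperimetric inequality. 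This gives ${\bf 1}_{E_j}^\vee(y)=0$ off a set whose capacity at scale $r$ is controlled by $\sum_{i\ge i(r)}\varepsilon_i$, which yields (e) and in particular (d).

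\emph{Main obstacle: superminimality in all of $B(x,R)$ in (a).} Each ${\bf 1}_{\tilde E_i}$ is superminimizing only away from the obstacle. Moreover, the union of superminimizers need not be a superminimizer, because of nonlocal interaction between different $i$. The role of $k$ is to separate the pieces in the same residue class by a factor $2^k$ in scale, so that the interaction terms $\int_{\tilde E_i}\int_{\tilde E_{i'}}$ are small compared to the individual energies. To handle this we would not solve the obstacle problems separately. Instead, we would define $E_j$ directly as a minimizer of the energy $F$ among sets containing $\bigcup_{i\equiv j}F_i$, in a domain containing $B(x,R)$. On the obstacle set the indicator equals $1$, which is its maximum, so perturbations by $\eta\ge0$ there are trivial. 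This means ${\bf 1}_{E_j}$ is an $s$-superminimizer in all of $B(x,R)$, and it also sidesteps the need to union superminimizers. The price is that the estimates in (d) and (e) must now be redone for the joint minimizer. We expect to do this by comparing with the competitor $\bigcup_i F_i$ and localizing via Theorem~\ref{thm1} on annuli. The separation by $k$ ensures that in the annulus near $r_i$, the set $E_j$ has small density away from the single obstacle $F_i$, so the density argument still works scale by scale. This is the step where $k$, depending only on $s$ and $C_\mu$, enters: it absorbs the tail contributions from the other scales, which decay like $2^{-ks}$ times a doubling factor.
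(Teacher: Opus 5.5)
\paragraph{Verdict.} Your final construction, a single obstacle problem per residue class mod $k$ whose obstacle is the union of the pieces at the scales in that class, is the one the paper uses. With it, (a) is automatic: $u+\eta$ stays admissible for every $\eta\ge0$, so you do not need the ``indicator equals its maximum'' remark. Items (b) and (c) are also immediate. But (d) and (e) are the whole difficulty, and your plan for them has a genuine gap.

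\paragraph{The weak Harnack inequality is used in the wrong direction.} Theorem~\ref{thm1} applied to the subminimizer $1-{\bf 1}_{E_j}$ only bounds ${\bf 1}_{E_j}$ from below: it says $E_j$ is full where it is dense. Small measure plus superminimality can never give ${\bf 1}_{E_j}^\vee(y)=0$. For instance, the solution of an obstacle problem whose obstacle is a tiny ball is a superminimizer of tiny measure with ${\bf 1}^\wedge=1$ on that ball. You need an upper bound on ${\bf 1}_{E_j}$, and this holds only where the obstacle is inactive, since there the obstacle solution is also subminimizing. The paper proves this as part (b) of Theorem~\ref{thm1-20nov}, via the Caccioppoli inequality of Proposition~\ref{lem2.2-13oct}. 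That inequality uses the test function $\eta=-\phi(u-k)_+$, which keeps $u+t\eta\ge\psi$ when $\psi\le k$. Your claim that an obstacle solution is ``superminimizing only away from the obstacle'' has this backwards.

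\paragraph{The obstacle must vanish on definite annuli.} To use that upper bound, the obstacle must vanish on a definite annulus at every scale of the class. Your $F_i$ are only required to lie in $B(x,2r_i)$ and may reach down to $x$. The paper instead uses $W\cap D_j$, where $W\supset A$ is open and $s$-thin at $x$ (Lemma~\ref{openset}). This set is empty on $\frac{9}{20}B_{j+lk}\setminus 2^{-k}B_{j+lk}$ for every $l$.

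\paragraph{Comparing with $\bigcup_i F_i$ gives no small-scale control.} That comparison controls only $P_s(E_j)$, which is dominated by the largest scales. It says nothing about $P_s(E_j\cap B(x,\rho))$ for small $\rho$, because cutting $E_j$ at radius $\rho$ creates interaction terms. What (d) and (e) need is the scale-by-scale bound
\[
P_s(E_j\cap B_{j+lk+k-1})\le\caps(W\cap B_{j+lk},2B_{j+lk}).
\]

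\paragraph{How the paper gets the scale-by-scale bound.} It argues by induction on $l$ (Proposition~\ref{empty-intersection}), starting from the outermost scale, where the global bound suffices. Each step runs as follows:
\begin{enumerate}
\item Suppose $E$ already misses the annulus $\frac14B_{lk}\setminus 2^{1-k}B_{lk}$. Then the inner piece $E\cap B_{lk+k-1}$ is separated from the rest of $E$.
\item Compare $E$ with the local competitor $E_{lk}\cup(E\setminus B_{lk+k-1})$, where $E_{lk}$ solves the obstacle problem with obstacle $W'\cap B_{lk}$.
\item Lemma~\ref{nonlocal} bounds the interaction $L_s(E\cap B_{lk+k-1},E\setminus B_{lk+k-1})$ by $\frac14P_s(E\cap B_{lk+k-1})$, so it can be absorbed. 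This is exactly where $k$ enters.
\item The isoperimetric inequality and the obstacle weak Harnack inequality then show that $E$ misses the annulus at the next scale $l+1$.
\end{enumerate}
Your sentence that ``the separation by $k$ ensures \dots\ small density away from the single obstacle $F_i$'' is precisely the statement that must be proved. Nothing in your proposal supplies this mechanism of gap, local competitor, absorption and induction.
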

	
	The claim \ref{c-item-} in Theorem \ref{weakCartan-main} gives that there is at least one $j\in\{0,\ldots,k-1\}$ such that $x\in \overline {E_j}\setminus E_j$ and $\lim_{E_j\cap A\ni y\to x}{\bf 1}_{E_j}^\wedge(y)=1$,
	{in particular ${\bf 1}_{E_j}^\wedge$ fails to be upper semicontinuous at $x$}.
	In Example \ref{ex3.9-15dec}, we construct a set $A$
	in a Euclidean space that is $s$-thin at the origin $O$,
	such that  $O\in\overline{A}\setminus A$ and moreover $C_{s,1}(\{O\})>0$.
	This gives an example of a situation where Theorems \ref{weakCartan-main}
	and \ref{thm3.17-26nov} are applicable.
	
	Notice that Theorem \ref{weakCartan-main} is called a \emph{weak} Cartan property
	because we need $k$ different $s$-superminimizers ${\bf 1}_{E_j}$ whose
	\emph{union} covers $A$ in a neighborhood of $x$.
	If only one superminimizer is needed, then we use the term \emph{(strong) Cartan property}.
	If $x$ has strictly positive capacity, we can show such a property.
	
	\begin{theorem}
		[Strong Cartan property]
		\label{thm3.17-26nov}
		Let $(X,d,\mu)$ be a complete, connected metric measure space equipped with a doubling measure.  Let $0<s<1$. 
		Let $A\subset X$ be  
		{$s$-thin}
		at $x\in \overline A\setminus A$ with $C_{s,1}(\{x\})>0$. 
		Then for all $0<R_0<\frac{1}{4}\diam X$, there exists
		an $s$-superminimizer $u$ 
		{in $B(x,R_0)$}
		such that 
		\begin{equation}\notag
			\lim_{A\ni y\to x}u^\wedge (y)=\infty>u^\vee(x).
		\end{equation}
	\end{theorem}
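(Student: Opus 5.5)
We build $u$ as a sum of solutions to obstacle-type problems on dyadic annuli, each of whose lower approximate limits is $\ge 1$ on the piece of $A$ in that annulus. First we shrink to small scales. Since $A$ is $s$-thin at $x$, we pick radii $r_i\downarrow 0$ with $r_1<R_0/2$, $r_{i+1}\le r_i/2$, such that
\[
\caps(A\cap B(x,r_i),B(x,2r_i))\le 2^{-i}\caps(B(x,r_i),B(x,2r_i)).
\]
We want to control the pointwise value at $x$ through the capacity of $\{x\}$. The key quantitative input is $C_{s,1}(\{x\})>0$. For $p=1$, and in our setting, this should force $\caps(B(x,r),B(x,2r))\approx\mu(B(x,r))r^{-s}$ to stay bounded below as $r\to0$. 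More usefully, it should make the capacity of the annular pieces of $A$ small compared to the capacity of $\{x\}$.

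For each $i$, let $A_i:=A\cap B(x,r_i)$, possibly intersected with an annulus to keep the supports disjoint from $x$. We take a capacitary ($s$-superminimizing) potential $u_i$ for $A_i$ relative to a fixed ball $B(x,R_0)$. Concretely, $u_i$ minimizes the $W^{s,1}$ energy among functions with $u_i\ge 1$ on a neighbourhood of $A_i$ (using the outer capacity) and $u_i=0$ outside $B(x,R_0)$. By truncation we may take $0\le u_i\le 1$, and by the coarea formula for the $W^{s,1}$ functional we may take $u_i={\bf 1}_{E_i}$ with $E_i$ a minimal superlevel set. Then ${\bf 1}_{E_i}$ is an $s$-superminimizer in $B(x,R_0)$, and its energy is comparable to $\caps(A_i,B(x,R_0))$, which is small. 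Because $E_i\supset$ an open neighbourhood of $A_i$, we get ${\bf 1}_{E_i}^\wedge=1$ on $A_i$.

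We then set $u:=\sum_i u_i$. A sum of superminimizers is a superminimizer for the $p=1$ functional, since $|a+b|\le|a|+|b|$ is subadditive and the minimality inequalities add after splitting $\eta$ proportionally. If this fails, we instead define $u$ as a single obstacle-problem solution with obstacle $\sum_i{\bf 1}_{U_i}$, where $U_i$ are the open sets. Taking the obstacle solution has the advantage that superminimality is automatic.

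It remains to check the two limits.
\begin{itemize}
\item For $y\in A$ near $x$, $y\in A_i$ for all $i$ up to some $N(y)\to\infty$. Using the lower semicontinuity and superadditivity of $\wedge$ (Corollary \ref{cor3.3-16oct}), this gives $u^\wedge(y)\ge N(y)\to\infty$.
\item For the upper limit at $x$, we apply Theorem \ref{thm1} to the subminimizer $\min$-type comparison. More simply, we need $u^\vee(x)<\infty$, i.e.\ $x$ is not in the "polar" set of $u$. Since $\sum_i$ of the energies is finite, we have $u\in W^{s,1}$. Sets where $u^\vee=\infty$ have zero $C_{s,1}$ capacity (capacitary weak-type estimate $C_{s,1}(\{u^\vee>t\})\le \|u\|/t$). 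Because $C_{s,1}(\{x\})>0$, it follows that $u^\vee(x)<\infty$.
\end{itemize}

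The main obstacle is the last step: establishing that the superlevel sets $\{u^\vee>t\}$ have capacity at most $C\|u\|_{W^{s,1}}/t$, with the right pointwise representative. This requires a quasicontinuity-type result. I would try to derive it from Theorem \ref{thm1} applied locally, together with a standard covering argument.
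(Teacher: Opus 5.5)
\textbf{There is a genuine gap.} Your endgame (open $U_i\supset A\cap B(x,r_i)$, an obstacle built from $\sum_i{\bf 1}_{U_i}$, its obstacle solution, and $C_{s,1}(\{x\})>0$ to get $u^\vee(x)<\infty$) matches the paper's. The missing step is summability, $\sum_i\caps(U_i,B(x,R_0))<\infty$, which you assume when you write that the energies have finite sum. You need it so that the obstacle class $\mathcal K_{\psi,0}(B(x,R_0))$ is nonempty and $u\in W^{s,1}(X)$; in other words, you need $\caps(A\cap B(x,r),B(x,R_0))\to0$. Your choice of $r_i$ only yields
\[
\caps(A_i,B(x,R_0))\le\caps(A_i,B(x,2r_i))\le 2^{-i}\caps(B(x,r_i),B(x,2r_i))\simeq_s 2^{-i}\,\frac{\mu(B(x,r_i))}{r_i^s}.
\]
Positivity of $C_{s,1}(\{x\})$ gives only a \emph{lower} bound on $\mu(B(x,r))/r^s$, which is the wrong direction. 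In Example~\ref{ex3.9-15dec} one has $\mu(B(O,r))/r^s\simeq r^{n+\delta-s}\to\infty$, and thinness gives no rate of decay for the ratio. So in general no choice of $r_i$ makes this bound small, let alone summable. Your sentence that $C_{s,1}(\{x\})>0$ ``should make the capacity of the annular pieces of $A$ small'' is exactly the statement that needs proof.

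In the paper this is Lemma~\ref{lem3.16-26nov}, and it is where the real work lies:
\begin{enumerate}
\item The weak Cartan property (Theorem~\ref{weakCartan-main}) gives ${\bf 1}_{E_0},\dots,{\bf 1}_{E_{k-1}}\in W^{s,1}(X)$ with $\max_j{\bf 1}_{E_j}^\wedge\equiv1$ (hence $\max_j{\bf 1}_{E_j}^\vee\equiv1$) on $A\cap B(x,R)$ and ${\bf 1}_{E_j}^\vee(x)=0$.
\item Quasicontinuity (Proposition~\ref{prop:quasi.-rep.}) gives an open $G$ with $C_{s,1}(G)<\varepsilon<C_{s,1}(\{x\})$, off which every ${\bf 1}_{E_j}^*$ is continuous and equals ${\bf 1}_{E_j}^\vee$.
\item Then $x\notin G$, and continuity at $x$ forces $A\cap B(x,r)\subset G$ for small $r$. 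Hence $C_{s,1}(A\cap B(x,r))<\varepsilon$, and \eqref{eq4.7-29oct} transfers this to $\caps(\cdot,B(x,R_0))$.
\end{enumerate}
So $C_{s,1}(\{x\})>0$ is used twice, and the weak Harnack inequality enters through the Cartan property, not through a weak-type estimate. Conversely, the step you flag as the main obstacle is off the shelf. Proposition~\ref{prop:quasi.-rep.} gives $C_{s,1}(\mathcal N_u)=0$, so $x$ is a Lebesgue point of $u\in W^{s,1}(X)$ and $u^\vee(x)=u^*(x)<\infty$.

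Two smaller points. First, sums of $s$-superminimizers are not $s$-superminimizers when $p=1$. Subadditivity bounds both $F_s(u_1+u_2)$ and $F_s(u_1+u_2+\eta)$ by $F_s(u_1+\eta_1)+F_s(u_2+\eta_2)$, which compares nothing. Already ${\bf 1}_{\{x_1>0\}}+{\bf 1}_{\{x_2>0\}}$ fails in the local $\mathrm{BV}$ case, so the obstacle fallback is necessary. Second, the $A_i$ must be the nested sets $A\cap B(x,r_i)$, not annular pieces, for $u^\wedge(y)\ge N(y)\to\infty$ to hold. That bound then follows directly from $u\ge N(y)$ a.e.\ on the open set $\bigcap_{i\le N(y)}U_i\ni y$, without Corollary~\ref{cor3.3-16oct}.
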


	The Cartan properties that we prove
	are analogs of results in the local case, from which the terminology is also borrowed.
	We refer the reader to \cite{BBL15,BBL18,La20,MZ} and references therein for
	a discussion on the analogous results
	in the local case of $W^{1,p}$-superminimizers and  BV-superminimizers.

	\section{Preliminaries}
	Throughout the paper,   we always assume that $0<s<1$  and 
	that $(X,d,\mu)$ is a complete, connected metric measure space equipped with a doubling measure defined as below.
	\subsection{Nonlocal minimizers}\ \label{sec2.1}
	
	A triple $(X,d,\mu)$ is said to be a metric measure space if $(X,d)$
	is a nonempty separable metric space and $\mu$ is a locally finite Borel-regular
	outer measure on $X$.
	Hence for every
	set $A\subset X$, we have
	\[
	\mu(A)= \inf \{ \mu(O): A\subset O\subset X,\, O\  {\rm is\ open} \},
	\]
	see e.g. \cite[Proposition 3.3.37]{HKST15}.
	Then $\mu(A)$ is well-defined for every $A\subset X$.
	
	Recall that we denote by $B(x,r)$ the open ball with radius $r>0$ and center $x\in X$.
	For $\tau>0$, we set $\tau B(x,r):=B(x,\tau r)$.
	We always assume $\mu(B)>0$ for every ball $B=B(x,r)$.
	On a few occasions, we use closed balls, denoted by 
	$\overline{B}{(x,r)}:=\{y\in X:d(y,x)\le r\}$.

	For an open set $\Omega\subset X$ and
	a measurable function $u:\Omega\to [-\infty, \infty]$, we set
	\[
	\|u\|_{\dot W^{s,1}(\Omega)}=\int_{\Omega}\int_{\Omega}\frac{|u(x)-u(y)|}{\mu(B_{x,y})d(x,y)^s}\,d\mu(x)\,d\mu(y),
	\]
	where $B_{x,y}:=B(x,d(x,y))\cup B(y,d(y,x))$, and
	\[
	\|u\|_{W^{s,1}(\Omega)}:=\|u\|_{L^1(\Omega)}+\|u\|_{\dot W^{s,1}(\Omega)}.
	\]
	We denote by $ \dot W^{s,1}(\Omega)$ and $W^{s,1}(\Omega)$ the spaces consisting of all measurable functions $u$ on $\Omega$ for which  the quantities $\|u\|_{\dot W^{s,1}(\Omega)}$ and $\|u\|_{W^{s,1}(\Omega)}$ are respectively finite. We denote by $W^{s,1}_0(\Omega)$ the collection of all functions $u\in W^{s,1}(X)$ such that $u= 0$ a.e. on $\Omega^c$, where $\Omega^c:=X\setminus\Omega$.
	We also notice
	that if $\|u\|_{\dot W^{s,1}(\Omega)}<\infty$, then $u$ is locally integrable on $\Omega$;
	indeed,
	we have for a.e. $y\in \Omega$ that $u(y)\in\mathbb R$ and $\int_{\Omega}\frac{|u(x)-u(y)|}{\mu(B_{x,y})d(x,y)^s}\,d\mu(x)<\infty$. Hence for each ball $B\subset\Omega$ with radius $r$, we have that for a.e. $y\in B$,  
	\begin{align}\label{eq2.1-11dec}
		\dashint_B |u(x)|\,d\mu(x)
		&\leq  |u(y)|+\dashint_{B} {|u(x)-u(y)|}\,d\mu(x)\\
		&\leq |u(y)|+\, \frac{\mu(4B) }{\mu(B)}(2r)^s\int_{B} \frac{|u(x)-u(y)|}{\mu(B_{x,y})d(x,y)^s}\,d\mu(x)<\infty,  \notag
	\end{align}
	because
	$B_{x,y}\subset 4B$
	and $d(x,y)\leq 2r$ for all $x\in B$.
	
	Let $\Omega\subset X$ be an open set.
	We consider the following functional $F$ of
	two measurable functions $u,v$ on $X$:
	\[
	F(u,v):=\left(\int_{\Omega}\int_{\Omega}+2\int_{\Omega}\int_{\Omega^c}\right) \frac{|u(x)-u(y)|-|v(x)-v(y)|}{\mu(B_{x,y})d(x,y)^s}\,d\mu(x)\,d\mu(y).
	\]

	A measurable function $u:X\to[-\infty,\infty]$ is an \emph{$s$-superminimizer}
	(resp. \emph{$s$-subminimizer}) {in $\Omega$}
	if $u\in \dot W^{s,1}(\Omega)$ and for every nonnegative (resp. nonpositive)
	function $\eta\in W^{s,1}_0(\Omega)$, we have \begin{equation}\label{eq1.2-27oct}
		F(u,u+\eta)\leq 0.
	\end{equation}

	Note that such $u$ is assumed to belong to $\dot W^{s,1}(\Omega)$. Recall that 
	\[
	F_{s}(u):=F_{s,1}(u)= \left( \int_\Omega\int_\Omega +2\int_\Omega\int_{\Omega^c} \right) \frac{|u(x)-u(y)|}{\mu(B_{x,y})d(x,y)^s}\,d\mu(x)\,d\mu(y).
	\]
	If $F_{s}(u)<\infty$, then we have from the triangle inequality that $F_s(u+\eta)<\infty$ since $\eta\in W^{s,1}(X)$. It follows from adding $F_{s}(u+\eta)$ to both sides of \eqref{eq1.2-27oct} that if $F_s(u)<\infty$ then the condition \eqref{eq1.2-27oct} is equivalent to  the condition of the minimization problem of the functional $F_{s}$.
	Since $u\in \dot W^{s,1}(\Omega)$, we have that the condition  $F_s(u)<\infty$ is equivalent to the finiteness of the following nonlocal quantity
	\[
	\int_\Omega\int_{\Omega^c} \frac{|u(x)-u(y)|}{\mu(B_{x,y})d(x,y)^s}\,d\mu(x)\,d\mu(y).
	\]

	\subsection{Doubling measure and  Poincar\'e inequality}\
	\label{sec2.2}

	We say that $(X,d,\mu)$ is a \emph{metric measure space equipped with a doubling measure} if there exists a doubling constant $C_\mu>0$  such that
	\begin{equation}\notag
		\mu(B(x,2r))\leq C_\mu \mu(B(x,r)) 
	\end{equation}
	for all balls $B(x,r)$. 
	
	We say that  $\mu$ supports a \emph{$1$-Poincar\'e inequality} if there exist  Poincar\'e constants $C_P>0$,  $\lambda\ge1$ such that
	\begin{equation}\label{equ1.3-2011}\notag
		\dashint_{B(x,r)} |u - u_{B(x,r)}|\,d\mu
		\leq C_P r \dashint_{B(x,\lambda r)} g\, d\mu
	\end{equation}
	for every ball $B(x,r)$, every function $u$ integrable on balls, and every upper gradient $g$ of $u$, where $u_B:=\dashint_{B}u\, d\mu:=\frac{1}{\mu(B)}\int u\, d\mu.$
	Here we say that a Borel function $g:X\to[0,\infty]$ is an upper gradient of $u$ if 
	\[
	|u(\gamma(0))-u(\gamma(1))|\leq \int_\gamma g\, ds
	\]
	for every rectifiable curve $\gamma$ in $X$.
	We refer the reader to \cite{BB11,HKST15,Ha03,HK00,Sh00} for a discussion on upper gradients, Poincar\'e inequalities, and their applications in complete, doubling metric measure spaces.

	We write $a \lesssim b$ or $b \gtrsim a$ 
	if there is an implicit comparison constant $C>0$ depending only on  the doubling constant
	(and  
	{Poincar\'e constants}	
	if we assume a Poincar\'e inequality) such that $a \leq C\cdot b$,  and $a \simeq b$ if
	$a \lesssim b \lesssim a$. If $C$ depends on a distinguished and important parameter $t$,
	then we write $a\lesssim_t b$, or $b\gtrsim_t a$, or $a\simeq_t b$.
	
	A positive real constant $Q>0$ is called a \emph{doubling dimension} if there is a constant $c>0$ such that
	\begin{equation}\label{eq1.2-2011}
		{ \frac{\mu(B(x,r))}{\mu(B(x,R))}\geq {c} \left( \frac{r}{R}\right)^Q }
	\end{equation}
	for all $0<r<R$ and $x\in X$. If $\mu$ is a doubling measure with a doubling constant $C_\mu>0$, then $Q=\log_2(C_\mu)$
	satisfies \eqref{eq1.2-2011}
	with the constant $c>0$ depending only on the doubling constant $C_\mu$;
	a smaller $Q$ may work as well.

	\begin{theorem}[{\cite[Lemma 2.2]{DLV23}}]
		There exists a constant $C\ge 1$ depending only on the doubling constant so that for every ball
		$B$ with radius $r>0$ and for every measurable function $u$, 
		\begin{align}
			\dashint_{B}|u-u_{B}| \,d\mu
			\leq\,  \frac{C r^s}{\mu(B)} \|u\|_{\dot W^{s,1}(B)}.
			\label{Poincare-local-1}
		\end{align}
	\end{theorem}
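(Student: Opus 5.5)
The plan is to prove the inequality directly, without any chaining argument: for $x,y$ in the same ball the kernel $\frac{1}{\mu(B_{x,y})d(x,y)^s}$ is bounded below by a multiple of $\frac{1}{\mu(B)r^s}$. Let $B=B(z,r)$.

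First, we may assume $\|u\|_{\dot W^{s,1}(B)}<\infty$, since otherwise there is nothing to prove. Then, exactly as in \eqref{eq2.1-11dec} applied with $\Omega=B$, for a.e.\ $y\in B$ we have $u(y)\in\mathbb R$ and $\int_B \frac{|u(x)-u(y)|}{\mu(B_{x,y})d(x,y)^s}\,d\mu(x)<\infty$. Hence $u\in L^1(B)$, so $u_B$ is a well-defined real number.

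Next, by Jensen's (or the triangle) inequality, $|u(x)-u_B|\le \dashint_B|u(x)-u(y)|\,d\mu(y)$ for every $x\in B$. Integrating over $x$ gives
\[
\dashint_B|u-u_B|\,d\mu\le \frac{1}{\mu(B)^2}\int_B\int_B|u(x)-u(y)|\,d\mu(y)\,d\mu(x).
\]

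The key geometric step is a uniform lower bound for the kernel on $B\times B$. For $x,y\in B$ we have $d(x,y)<2r$. Moreover $B_{x,y}\subset B(x,2d(x,y))\subset B(x,4r)\subset B(z,5r)$. By doubling, $\mu(B_{x,y})\le\mu(8B)\le C_\mu^3\mu(B)$. Consequently
\[
1\le \frac{C_\mu^3\,2^s\,\mu(B)\,r^s}{\mu(B_{x,y})\,d(x,y)^s}\qquad\text{for all } x,y\in B,\ x\neq y.
\]
The diagonal $x=y$ contributes nothing, since the integrand $|u(x)-u(y)|$ vanishes there. Inserting this bound into the double integral yields
\[
\dashint_B|u-u_B|\,d\mu\le \frac{2C_\mu^3\,r^s}{\mu(B)}\|u\|_{\dot W^{s,1}(B)},
\]
which is \eqref{Poincare-local-1} with $C=2C_\mu^3$, a constant depending only on $C_\mu$.

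There is no real obstacle here. The only points needing care are the integrability of $u$ on $B$, which is handled by the finiteness reduction above, and checking the inclusion $B_{x,y}\subset 5B$ so that doubling applies with a fixed number of steps.
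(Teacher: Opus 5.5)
Your proof is correct. The averaging step, the inclusion $B_{x,y}\subset 5B$ (so $\mu(B_{x,y})\le C_\mu^3\mu(B)$), and $d(x,y)^s\le 2^s r^s$ together give the inequality with $C=2C_\mu^3$, independent of $s$.

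The paper does not prove this statement itself but cites \cite[Lemma 2.2]{DLV23}. Your argument is the standard direct proof of that lemma: bound the kernel from below uniformly on $B\times B$. It is also the same kernel estimate the paper itself uses in \eqref{eq2.1-11dec}.
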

	
	\begin{theorem}[{\cite[Theorem 3.4]{DLV23}}]

		Let $Q>s$ be a doubling dimension.
		
		Then there exists a constant $C\ge 1$ depending only on $s$ and the doubling constant so that for all balls $B$ with radius $r>0$ and for all measurable functions $u$, 
		\begin{align}
			\left(\dashint_{B}|u-u_{B}|^{\frac{Q}{Q-s}} \,d\mu\right)^{\frac{Q-s}{Q}} 
			\leq\, 
			\frac{C r^s}{\mu(B)} \|u\|_{\dot W^{s,1}(2 B)}.
			\label{Poincare-local-2}
		\end{align}
	\end{theorem}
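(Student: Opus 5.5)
The statement just above is the local fractional Sobolev--Poincar\'e inequality \eqref{Poincare-local-2}, quoted from \cite[Theorem 3.4]{DLV23}. I would prove it in two stages: first a pointwise estimate in terms of a fractional maximal-type function, then a weak-type bound combined with truncation to reach the strong $L^{Q/(Q-s)}$ norm.

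\emph{Step 1: pointwise telescoping.} Fix a Lebesgue point $x\in B$ of $u$ and consider the balls $B_i:=B(x,2^{-i}r)$. For $i\ge 1$ these satisfy $B_i\subset 2B$, and $B_0\subset 2B$ as well. Telescoping gives
\[
|u(x)-u_{B}|\le \sum_{i\ge0}|u_{B_{i+1}}-u_{B_i}|+|u_{B_0}-u_B|.
\]
Each term is bounded by \eqref{Poincare-local-1} together with doubling:
\[
|u_{B_{i+1}}-u_{B_i}|\lesssim \dashint_{B_i}|u-u_{B_i}|\,d\mu\lesssim \frac{(2^{-i}r)^s}{\mu(B_i)}\,\|u\|_{\dot W^{s,1}(B_i)}.
\]
The term $|u_{B_0}-u_B|$ is handled the same way using $2B$.

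\emph{Step 2: weak-type estimate.} Put $M:=\|u\|_{\dot W^{s,1}(2B)}$. Split the sum at a scale $i_0$. For small balls, $i\ge i_0$, I would instead split $\|u\|_{\dot W^{s,1}(B_i)}$ as an integral over $y\in B_i$ of the density $g(y):=\int_{2B}\frac{|u(z)-u(y)|}{\mu(B_{z,y})d(z,y)^s}\,d\mu(z)$, so that $\|g\|_{L^1(2B)}=M$. The resulting sum is then a fractional maximal function of $g$ of order $s$, meaning $\sup_\rho \rho^s\dashint_{B(x,\rho)}g\,d\mu$. For large balls, the trivial bound $\|u\|_{\dot W^{s,1}(B_i)}\le M$ applies, and the doubling dimension gives $\mu(B_i)\ge c\,2^{-iQ}\mu(B)$. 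Hence $\sum_{i<i_0}(2^{-i}r)^s/\mu(B_i)\lesssim r^s 2^{i_0(Q-s)}/\mu(B)$; this geometric sum converges in the right direction precisely because $Q>s$. Optimizing $i_0$ in the standard Hedberg way yields
\[
\mu(\{x\in B: |u(x)-u_B|>t\})\lesssim \left(\frac{r^sM}{t\,\mu(B)^{(Q-s)/Q}}\right)^{Q/(Q-s)}.
\]
This is the weak $L^{Q/(Q-s)}$ bound with the correct scaling, and it uses that the fractional maximal operator maps $L^1$ into weak $L^{Q/(Q-s)}$ on a space of doubling dimension $Q$. That mapping property follows from a Vitali covering argument plus the lower mass bound \eqref{eq1.2-2011}.

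\emph{Step 3: strong type via truncation.} Upgrade the weak bound by the Maz'ya truncation trick. Apply it to the truncations $v_k:=\min\{\max\{u-m,2^k\},2^{k+1}\}-2^k$, where $m$ is a median of $u$ on $B$, and do the same for $-(u-m)$. The key fact is that the $W^{s,1}$ seminorm is \emph{not} exactly additive over level truncations, unlike the local case. However, $\sum_k|v_k(x)-v_k(y)|=|(u-m)_+(x)-(u-m)_+(y)|\le|u(x)-u(y)|$ pointwise, so $\sum_k\|v_k\|_{\dot W^{s,1}(2B)}\le M$. Summing the weak estimates over $k$, and using $\ell^1\subset\ell^{Q/(Q-s)}$, gives the strong bound for $u-m$. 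Replacing $m$ by $u_B$ costs only a constant factor.

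I expect the main obstacle to be Step 3. The level sets must be handled on $B$ while the seminorm lives on $2B$, and the median must be taken carefully so that each $v_k$ vanishes on half of $B$, which is what lets the weak estimate apply to $v_k$ without an average term. I would use the median relative to $B$ and apply the weak estimate in the form $\mu(\{v_k\ge 2^k\}\cap B)\lesssim(\cdots)$, which holds because $(v_k)_B\le 2^{k-1}$.
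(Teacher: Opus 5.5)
Your route differs from the paper's, which gives no proof of \eqref{Poincare-local-2}. The paper simply quotes \cite[Theorem 3.4]{DLV23} and remarks that it needs a reverse doubling condition, supplied by connectedness via \cite[Corollary 3.8]{BB11}. Your Haj{\l}asz--Koskela-style argument uses only three ingredients: telescoping with \eqref{Poincare-local-1}, a Hedberg splitting, and Maz'ya truncation. Once repaired as below, it relies only on doubling, the lower mass bound \eqref{eq1.2-2011}, and the enlarged ball $2B$. It would therefore give the inequality without reverse doubling, with constants depending also on $Q$ and $c$. Step 3 is correct. In fact your identity $\sum_k|v_k(x)-v_k(y)|=|(u-m)_+(x)-(u-m)_+(y)|$ shows the seminorm \emph{is} exactly additive over these truncations; this is the discrete form of the coarea formula \eqref{eq2.7-27nov}, contrary to your remark.

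Step 2, however, does not work as written. The small-scale part $\sum_{i\ge i_0}(2^{-i}r)^s\dashint_{B_i}g\,d\mu$ is a truncated Riesz potential of $g$, not the fractional maximal function $\sup_\rho \rho^s\dashint_{B(x,\rho)}g\,d\mu$. Each term is bounded by the latter, but there are infinitely many terms. For instance, $g(y)=|y-x|^{-s}$ on a neighbourhood of $x$ in $\mathbb R^n$ has finite fractional maximal function at $x$, while the sum diverges. Moreover, a bound independent of $i_0$ would leave nothing to optimize.

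The fix is the genuine Hedberg argument. Bound $\dashint_{B_i}g\,d\mu\le\mathcal Mg(x)$ with the Hardy--Littlewood maximal function, so the small scales contribute $\lesssim s^{-1}(2^{-i_0}r)^s\mathcal Mg(x)$. The large scales contribute $\lesssim r^s2^{i_0(Q-s)}M/\mu(B)$, as you say. Choose $2^{-i_0Q}\simeq M/(\mu(B)\mathcal Mg(x))$; this allows $i_0\ge 0$, since $g$ vanishes off $2B\subset B(x,3r)$ and hence $\mathcal Mg(x)\gtrsim M/\mu(B)$ for $x\in B$. This gives $|u(x)-u_B|\lesssim r^s\,\mathcal Mg(x)^{(Q-s)/Q}\,(M/\mu(B))^{s/Q}$ plus the harmless term from $|u_{B_0}-u_B|$. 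The weak $(1,1)$ bound for $\mathcal M$ then yields the weak estimate.

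Note that the correct weak bound is $\bigl(r^sM/(t\,\mu(B)^{s/Q})\bigr)^{Q/(Q-s)}$, not with $\mu(B)^{(Q-s)/Q}$. Your exponent has the wrong scaling, and feeding it into Step 3 would not produce $Cr^sM/\mu(B)$. With $\mu(B)^{s/Q}$, Step 3 gives $\bigl(\dashint_B|u-m|^{Q/(Q-s)}\,d\mu\bigr)^{(Q-s)/Q}\lesssim r^sM/\mu(B)^{s/Q+(Q-s)/Q}=r^sM/\mu(B)$, as required.
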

	In fact, \eqref{Poincare-local-2} holds for doubling measures satisfying the reverse doubling condition of \cite[Theorem 3.4]{DLV23}, which is ensured by connectedness; see \cite[Corollary 3.8]{BB11}. Moreover, the proof of \cite[Theorem 3.4]{DLV23} shows that the constant in \eqref{Poincare-local-2} may degenerate only as $s \to 0$.
	
	Under the additional assumption that $X$ supports a $1$-Poincar\'e inequality, one obtains the following quantitatively sharp fractional Poincar\'e inequality  from \cite{KLLZ25}. We remark that the assumption of Poincar\'e inequality is stronger than connectedness since every metric measure space supporting  a Poincar\'e inequality is connected, see e.g. \cite[Section 4.5]{BB11} or \cite[Section 8]{HKST15}.
	
	\begin{theorem}[{\cite[Theorem 1.1]{KLLZ25}}]
		Let $Q>s$ be a doubling dimension. Suppose that $\mu$ supports a $1$-Poincar\'e inequality.
		
		Then there exist  constants $C\ge 1$ and $\tau\ge 1$ depending only on the doubling and Poincar\'e constants so that for all balls $B$ with radius $r>0$ and for all measurable functions $u$, 
		\begin{align}
			\, \left(\dashint_{B}|u-u_{B}|^{\frac{Q}{Q-s}} \,d\mu\right)^{\frac{Q-s}{Q}} 
			\leq\,  \frac{C(1-s)r^s}{\mu(\tau B)} \|u\|_{\dot W^{s,1}(\tau B)}.
			\label{Poincare-local}
		\end{align}
	\end{theorem}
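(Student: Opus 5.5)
The plan is first to prove the $L^1$ version of \eqref{Poincare-local} with the sharp factor $(1-s)$, and then to upgrade the exponent from $1$ to $\frac{Q}{Q-s}$ by a standard truncation argument. For the $L^1$ step I would regularize $u$ at scale $t$ and use the $1$-Poincar\'e inequality. Fix a ball $B=B(x_0,r)$, and for $0<t<r$ set
\[
E(t):=\int_{\tau B}\int_{\tau B\cap\{d(x,y)<ct\}}\frac{|u(x)-u(y)|}{\mu(B_{x,y})}\,d\mu(y)\,d\mu(x).
\]
Let $u_t$ be a discrete convolution of $u$ at scale $t$, built from a Lipschitz partition of unity subordinate to a cover of $X$ by balls of radius $t$. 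By doubling, $u_t$ has an upper gradient $g_t$ satisfying
\[
g_t(x)\lesssim \frac1t\dashint_{B(x,ct)}\dashint_{B(x,ct)}|u(z)-u(w)|\,d\mu(z)\,d\mu(w).
\]
Integrating, using Fubini and $\mu(B(x,ct))\simeq\mu(B_{z,w})$ when $d(z,w)\lesssim t$, gives $\int_{\lambda B}g_t\,d\mu\lesssim t^{-1}E(t)$. By the same reasoning, $\|u-u_t\|_{L^1(B)}\lesssim E(t)$.

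Next, combine these bounds with the $1$-Poincar\'e inequality applied to $u_t$ on $B$. This gives, for every $0<t<r$,
\[
\dashint_B|u-u_B|\,d\mu\lesssim \dashint_B|u-u_t|\,d\mu+r\dashint_{\lambda B}g_t\,d\mu\lesssim \frac{r}{t}\,\frac{E(t)}{\mu(\tau B)},
\]
with $\tau\ge\lambda$ and $\tau$ large enough to contain all the balls involved. The factor $(1-s)$ comes from averaging this estimate in $t$ against the probability weight $w(t)=(1-s)r^{s-1}t^{-s}$ on $(0,r)$:
\[
\dashint_B|u-u_B|\,d\mu\lesssim \frac{(1-s)r^s}{\mu(\tau B)}\int_0^r t^{-s-1}E(t)\,dt .
\]
By Fubini, $\int_0^\infty t^{-s-1}{\bf 1}_{\{d(x,y)<ct\}}\,dt=c^{s}s^{-1}d(x,y)^{-s}$, so the right-hand side is bounded by $\frac{(1-s)r^s}{s\,\mu(\tau B)}\|u\|_{\dot W^{s,1}(\tau B)}$. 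For $s\ge1/2$ the factor $1/s$ is harmless. For $s<1/2$ the factor $(1-s)$ is comparable to $1$, and there I would instead quote \eqref{Poincare-local-1}, which already gives the bound with a constant independent of $s$.

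For the upgrade to exponent $Q/(Q-s)$, I would use the same estimate on all subballs $B(x,\rho)\subset B$. Telescoping along the chain of balls $B(x,2^{-k}r)$ gives the pointwise bound
\[
|u(x)-u_B|\lesssim (1-s)\sum_k \frac{(2^{-k}r)^s}{\mu(B(x,2^{-k}r))}\|u\|_{\dot W^{s,1}(B(x,\tau2^{-k}r))}
\]
for Lebesgue points $x$. Then I would apply the truncation method of Haj\l asz--Koskela to the levels $\min\{\max\{|u-u_B|-2^j,0\},2^j\}$. The Gagliardo seminorm restricted to each level band sums over $j$ to at most $\|u\|_{\dot W^{s,1}}$, since $|v(x)-v(y)|$ splits additively across the truncations. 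This turns the weak-type estimate coming from the fractional maximal operator into the strong-type estimate at the endpoint exponent $Q/(Q-s)$, exactly as in the $p=1$ Sobolev--Poincar\'e theory.

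I expect the main obstacle to be this truncation step. The constant has to stay uniform as $s\to1^-$, so the weak-type bound for the fractional maximal operator of order $s$ must carry no blow-up in $s$ near $1$. I would first attempt to obtain that weak-type bound by a direct covering argument, using the lower mass bound \eqref{eq1.2-2011}.
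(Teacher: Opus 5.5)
The paper does not prove this statement; it is quoted from \cite{KLLZ25}, so your argument has to stand on its own. Your first step is sound. Regularizing at scale $t$, applying the $1$-Poincar\'e inequality, and averaging against $(1-s)r^{s-1}t^{-s}\,dt$ gives the $L^1$ inequality with factor $(1-s)/s$, and \eqref{Poincare-local-1} covers $s<1/2$.

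The gap is in the upgrade to the exponent $Q/(Q-s)$, and you have located it in the wrong place. The weak-type bound for the fractional maximal operator is harmless: a covering argument with \eqref{eq1.2-2011} gives it uniformly in $s$. The real problem is that your pointwise bound is a \emph{sum} over dyadic scales of the energies $\|u\|_{\dot W^{s,1}(B(x,\tau 2^{-k}r))}$ on nested balls. That is a Riesz-potential-type quantity, not a supremum. Passing from it to a maximal function needs a Hedberg or Haj\l asz--Koskela splitting, whose large-scale part is the series $\sum_{k\le k_0}2^{k(Q-s)}$. After optimization this costs a factor $(1-2^{-(Q-s)})^{-s/Q}$, which is $\simeq(1-s)^{-1}$ when $Q=1$. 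The theorem allows $X=\mathbb R$, $Q=1$, $s\to1^-$ with $C$ independent of $s$, and Theorem \ref{thm1-20nov} relies on exactly this. In that regime the loss cancels precisely the factor $(1-s)$ you are trying to keep. Your route works only when $Q-s$ stays bounded away from $0$.

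The loss sits in the majorant itself, so no refinement of the weak-type or truncation step can remove it. Take $X=\mathbb R$, $B=(-r,r)$, $v={\bf 1}_E$ with $E=(-\ell/2,\ell/2)$, and choose $\ell$ so that $(r/\ell)^{1-s}=(1-s)^{-1}$.
\begin{itemize}
\item For $|y|\gg\ell$, every scale $\rho=2^{-k}r$ with $|y|\lesssim\rho\le r$ contributes $\simeq(1-s)\rho^{s-1}\|v\|_{\dot W^{s,1}(B(y,\tau\rho))}\simeq(\ell/\rho)^{1-s}\ge 1-s$ to your majorant.
\item So the majorant exceeds $1/2$ on $\{|y|<r2^{-c/(1-s)}\}$. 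This set has relative measure whose $(1-s)$-th power is $\simeq1$.
\item The right-hand side of the desired weak-type estimate at level $1/2$ is $\simeq C(\ell/r)^{1-s}=C(1-s)$.
\end{itemize}
Hence $C\gtrsim(1-s)^{-1}$. Since $v$ is a characteristic function, truncation does not help.

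At $p=1$ you should avoid multiscale chains. By the coarea formula \eqref{eq2.7-27nov} and Minkowski's inequality applied to $(u-m)_\pm$, with $m$ a median of $u$ on $B$, it suffices to prove the relative isoperimetric inequality
\[
\mu(E\cap B)^{(Q-s)/Q}\lesssim(1-s)\,r^s\mu(B)^{-s/Q}\,\|{\bf 1}_E\|_{\dot W^{s,1}(\tau' B)}\quad\text{whenever } \mu(E\cap B)\le\mu(B)/2 .
\]
To prove it:
\begin{enumerate}
\item For each density point $x$ of $E\cap B$, pick one radius $\rho_x\le 2r$ at which the density of $E$ in $B(x,\rho_x)$ lies in $[1/(2C_\mu),1/2]$, as in the proofs of Lemmas \ref{lem4.10} and \ref{lemma4.11-Nov.12}.
\item Apply your sharp $L^1$ bound on that single ball to get $\mu(B(x,\rho_x))\lesssim(1-s)\rho_x^s\|{\bf 1}_E\|_{\dot W^{s,1}(\tau B(x,\rho_x))}$.
\item Convert $\rho_x^s$ into $\mu(B(x,\rho_x))^{s/Q}$ using \eqref{eq1.2-2011}.
\item Sum over a disjoint Vitali subfamily of the balls $\tau B(x,\rho_x)$, using superadditivity of $\Omega\mapsto\|{\bf 1}_E\|_{\dot W^{s,1}(\Omega)}$ and $\sum a_i^{(Q-s)/Q}\ge(\sum a_i)^{(Q-s)/Q}$.
\end{enumerate}
Each point uses a single scale, so no geometric series in $Q-s$ appears.
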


	Moreover, as a consequence of \eqref{Poincare-local-2}-\eqref{Poincare-local},
	we obtain the following isoperimetric inequality.
	We recall that $\supp u:=\overline{\{x\in X: u(x)\neq 0\}}$
	denotes the support of $u$.
	Moreover, we denote by ${\bf 1}_E:X\to \{0,1\}$ the characteristic function of the set $E\subset X$. 
	
	\begin{corollary}
		There exists a constant $C_1\ge 1$ depending only on $s$ and the
		doubling constant such that for all measurable functions $u$ on $X$ with
		$\supp u \subset \overline{B(x,r)}$,
		where  $x\in X$ and  $0<r<\frac{1}{4}\diam X$, 
		\begin{align}\label{isoperimetric-func.}
			\|u\|_{L^1(B(x,r))} \leq C_1 r^s \|u\|_{\dot W^{s,1}(X)}.
		\end{align}
		In particular, for every set $E$ with  $\mu(E\setminus B(x,r))=0$, we have
		\begin{align}\label{isoperimetric-set}
			\mu(E)\leq C_1 r^s \|{\bf 1}_E\|_{\dot W^{s,1}(X)}.
		\end{align}
		Moreover, if we additionally assume that $(X,d,\mu)$ supports a $1$-Poincar\'e inequality, then the constant $C_1$ in \eqref{isoperimetric-func.}-\eqref{isoperimetric-set} can be chosen to be $C_1=(1-s)C$, where $C$ 
		depends only on the doubling and Poincar\'e constants. 
	\end{corollary}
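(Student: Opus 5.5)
Since $\supp u\subset\overline{B(x,r)}$ with $r<\frac14\diam X$, the plan is to apply the fractional Poincar\'e inequality \eqref{Poincare-local-2} on the enlarged ball $B=B(x,2r)$, or in the Poincar\'e-space case \eqref{Poincare-local} on $B(x,2r)$ with $\tau B$. The integrability exponent $\frac{Q}{Q-s}>1$ is not needed; we only use that the $L^{Q/(Q-s)}$ average dominates the $L^1$ average by H\"older. Since $\|u\|_{\dot W^{s,1}(2B)}\le \|u\|_{\dot W^{s,1}(X)}$, this gives
\[
\dashint_{B(x,2r)}|u-u_{B(x,2r)}|\,d\mu \lesssim_s \frac{r^s}{\mu(B(x,2r))}\|u\|_{\dot W^{s,1}(X)} .
\]
We may assume $u\in L^1(B(x,2r))$; otherwise the right-hand side is infinite, since finiteness of the seminorm gives local integrability as in \eqref{eq2.1-11dec}.

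The key step is to remove the mean $u_B$. Because $r<\frac14\diam X$ and $X$ is connected, the annulus $B(x,2r)\setminus \overline{B(x,r)}$ should carry a fixed fraction of the measure of $B(x,2r)$. Connectedness gives a point $z$ with $d(x,z)=\frac32 r$, and then $B(z,r/2)\subset B(x,2r)\setminus\overline{B(x,r)}$ with $\mu(B(z,r/2))\gtrsim\mu(B(x,2r))$ by doubling. This is the standard reverse-doubling consequence of connectedness, as in \cite[Corollary 3.8]{BB11}. On this annulus $u=0$, so
\[
|u_B|\,\mu(B(z,r/2))\le \int_{B}|u-u_B|\,d\mu .
\]
Hence $|u_B|\lesssim \dashint_B|u-u_B|\,d\mu$. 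Writing $\int_B|u|\le\int_B|u-u_B|+\mu(B)|u_B|$, we then obtain
\[
\|u\|_{L^1(B(x,r))}\le \|u\|_{L^1(B)}\lesssim \int_B|u-u_B|\,d\mu\lesssim_s r^s\|u\|_{\dot W^{s,1}(X)} .
\]

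The set version \eqref{isoperimetric-set} follows by taking $u={\bf 1}_E$. Changing $u$ on a null set does not affect either side, so we may assume $E\subset B(x,r)$ and hence $\supp u\subset\overline{B(x,r)}$.

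For the Poincar\'e case, we use \eqref{Poincare-local} in place of \eqref{Poincare-local-2}. Then $\frac{r^s}{\mu(\tau B)}\le\frac{r^s}{\mu(B)}$, the constant becomes $C(1-s)$, and all other constants depend only on doubling.

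The main obstacle is the annulus lower bound. One must check that the point $z$ exists, which needs $B(x,2r)\ne X$, guaranteed by $2r<\diam X/2$, together with connectedness. One must also check that the constant does not depend on $s$, so that the sharp $(1-s)$ factor survives.
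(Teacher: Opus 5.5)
Your argument is correct, and it removes the mean $u_{B(x,2r)}$ differently from the paper.

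The paper uses the reverse-doubling bound $\mu(B(x,r))\le\xi\,\mu(B(x,2r))$ together with H\"older's inequality in $L^{Q/(Q-s)}$. This gives $|u_{B(x,2r)}|\le \xi^{s/Q}\bigl(\dashint_{B(x,2r)}|u|^{Q/(Q-s)}\,d\mu\bigr)^{(Q-s)/Q}$, which is then absorbed into the left-hand side of the $L^{Q/(Q-s)}$ triangle inequality. The gain $\xi^{s/Q}<1$ there genuinely relies on the exponent being larger than $1$.

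You instead stay in $L^1$ and use that $u$ vanishes identically on a ball $B(z,r/2)$ inside the annulus, whose measure is comparable to $\mu(B(x,2r))$. This gives $|u_B|\le C_\mu^3\dashint_B|u-u_B|\,d\mu$ directly, with no absorption and no need to check finiteness of an $L^{Q/(Q-s)}$ norm.

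Your route gives better constants. Your constant from this step is $1+C_\mu^3$, independent of $s$. The paper's absorption factor $(1-\xi^{s/Q})^{-1}$ blows up like $1/s$ as $s\to0$. So your argument actually yields $C_1=(1-s)C$ with $C$ depending only on the doubling and Poincar\'e constants, uniformly in $s\in(0,1)$. The paper's proof gives this only for $s$ bounded away from $0$, which is enough for its $s\to1^-$ application.

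A further simplification follows from your observation that the higher exponent is not needed. In the doubling-only case you can bypass \eqref{Poincare-local-2} and apply the $L^1$ inequality \eqref{Poincare-local-1} on $B(x,2r)$ directly. That gives a $C_1$ depending only on $C_\mu$, with no reference to a doubling dimension $Q$.
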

	\begin{proof}
		We prove the result under the assumption of a $1$-Poincar\'e inequality.
		The statement under the mere doubling assumption then follows by applying \eqref{Poincare-local-2}
		instead of \eqref{Poincare-local}.
		By \cite[Lemma~3.7]{BB11} and connectedness of the space,
		there exists $0<\xi<1$ such that 
		$\mu(B(x,r))\leq \xi \cdot \mu(B(x,2r))$ for all $0<r<\frac{1}{4}\diam X$. Let $Q>s$ be a doubling dimension. Then we obtain from
		$\supp u \subset \overline{B(x,r)}$ and the H\"older inequality that
		\begin{align*}
			\dashint_{B(x,2r)}|u|\,d\mu
			&=\frac{\mu(B(x,r))}{\mu(B(x,2r))}\dashint_{B(x,r)}|u|\,d\mu\\
			&\leq \frac{\mu(B(x,r))}{\mu(B(x,2r))} \left(\dashint_{B(x,r)} |u|^{\frac{Q}{Q-s}}\,d\mu \right)^{\frac{Q-s}{Q}}\\
			&= \left(\frac{\mu(B(x,r))}{\mu(B(x,2r))} \right)^{\frac{s}{Q}} \left(\frac{1}{\mu(B(x,2r))}\int_{B(x,r)} |u|^{\frac{Q}{Q-s}}\,d\mu \right)^{\frac{Q-s}{Q}}\\
			&\leq \xi^{\frac{s}{Q}}\left(\dashint _{B(x,2r)}|u|^{\frac{Q}{Q-s}}\,d\mu \right)^{\frac{Q-s}{Q}}.
		\end{align*}
		By the triangle inequality for the $L^{\frac{Q}{Q-s}}$-norm,
		there is a constant $C>0$ depending only on the doubling and Poincar\'e constants such that
		\begin{align*}
			&\left(\dashint_{B(x,2r)} |u|^{\frac{Q}{Q-s}}\,d\mu \right)^{\frac{Q-s}{Q}}\\
			&\leq \left(\dashint_{B(x,2r)} |u-u_{B(x,2r)}|^{\frac{Q}{Q-s}}\,d\mu \right)^{\frac{Q-s}{Q}} +|u_{B(x,2r)}|\\
			&\overset{\eqref{Poincare-local}}{\le} C(1-s)\frac{r^s}{\mu(B(x, r))}\|u\|_{\dot W^{s,1}(X)}+ \xi^{\frac{s}{Q}}\left(\dashint _{B(x,2r)}|u|^{\frac{Q}{Q-s}}\,d\mu \right)^{\frac{Q-s}{Q}}.
		\end{align*}
		Subtracting the second term from both sides, we then obtain from the doubling property and the H\"older inequality that
		\begin{align*}
			\dashint_{B(x,r)}|u|\,d\mu
			&\leq \frac{\mu(B(x,2r))}{\mu(B(x,r))} \left(\dashint_{B(x,2r)} |u|^{\frac{Q}{Q-s}}\,d\mu \right)^{\frac{Q-s}{Q}}\\
			&\leq C_\mu (1-\xi^{\frac{s}{Q}})^{-1} C(1-s)\frac{r^s}{\mu(B(x,r))}\|u\|_{\dot W^{s,1}(X)}.
		\end{align*}
		This proves \eqref{isoperimetric-func.}. The estimate \eqref{isoperimetric-set} follows immediately by applying \eqref{isoperimetric-func.} to $u={\bf 1}_{E\cap B(x,r)}$.
	\end{proof}
	
	\subsection{Obstacle problem}\
	
	Given a nonempty  open set $\Omega\subset X$, a function $\psi:\Omega\to [-\infty,\infty)$,
	and a  function $f\in \dot W^{s,1}(X)$, we set
	\[
	\mathcal K_{\psi,f}(\Omega):=\{u\in \dot W^{s,1}(X):u\geq \psi \text{ a.e. on } \Omega \text{ and } u=f \text{ a.e. on } X\setminus \Omega \}.
	\]
	We say that $u\in \mathcal{K}_{\psi, f}(\Omega)$ is a \emph{solution of the $\mathcal{K}_{\psi,f}(\Omega)$-obstacle problem}
	(or briefly a \emph{$\mathcal K_{\psi,f}(\Omega)$-solution}) if 
	$$\|u\|_{\dot W^{s,1}(X)}\leq \|v\|_{\dot W^{s,1}(X)}\quad 
	\text{for all $v\in  \mathcal{K}_{\psi, f}(\Omega)$.}$$ 
	
	If the characteristic function of a set $E$ is a solution of the obstacle problem, we simply refer to $E$ as a solution. Moreover, when $\psi=\mathbf{1}_W$ for some $W\subset X$, we denote $\mathcal{K}_{W,f}:=\mathcal{K}_{\psi,f}$.
	
	We observe some useful properties of solutions.
	
	\begin{proposition}\label{prop2.4-22nov}
		Let 
		$x\in X$ and $0<r<\frac{1}{4}\diam X$.
		Suppose that $\mathcal K_{\psi,f}(\Omega)\neq \emptyset$ where $\Omega\subset B(x,r)$ 
		is a nonempty open set, $\psi:\Omega\to [-\infty,\infty)$, and $f\in \dot W^{s,1}(X)$.
		Then  there exists a $\mathcal K_{\psi,f}(\Omega)$-solution $u\in \dot W^{s,1}(X)$.
		
		Furthermore, such a $\mathcal K_{\psi, f}(\Omega)$-solution $u\in \dot W^{s,1}(X)$  is an $s$-superminimizer 
		{ in $\Omega$. }
		
		If moreover $f\in W^{s,1}(X)$, then such $u$ also belongs to $W^{s,1}(X)$.
	\end{proposition}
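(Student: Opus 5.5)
We argue by the direct method in the calculus of variations. Set $m:=\inf\{\|v\|_{\dot W^{s,1}(X)}: v\in \mathcal K_{\psi,f}(\Omega)\}$, which is finite since $\mathcal K_{\psi,f}(\Omega)\neq\emptyset$. Take a minimizing sequence $v_i\in\mathcal K_{\psi,f}(\Omega)$ and set $w_i:=v_i-f$. Then $w_i=0$ a.e. on $X\setminus\Omega$, so $\supp w_i\subset\overline{B(x,r)}$, and $\|w_i\|_{\dot W^{s,1}(X)}\le \|v_i\|_{\dot W^{s,1}(X)}+\|f\|_{\dot W^{s,1}(X)}$ is bounded. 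Since $r<\frac14\diam X$, the isoperimetric inequality \eqref{isoperimetric-func.} gives $\|w_i\|_{L^1(B(x,r))}\le C_1r^s\|w_i\|_{\dot W^{s,1}(X)}$. Hence $(w_i)$ is bounded in $W^{s,1}(X)$, with all supports in the fixed bounded set $\overline{B(x,r)}$.

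The main step is compactness. We need that a family bounded in $W^{s,1}$ with supports in a fixed ball is precompact in $L^1$, so that a subsequence satisfies $w_i\to w$ in $L^1(X)$ and a.e. In the doubling setting we would prove this by a Kolmogorov--Riesz/Fréchet-type argument. For a radius $\delta$ we approximate $w_i$ by the averages $A_\delta w_i(y):=\dashint_{B(y,\delta)}w_i\,d\mu$. Then
\[
\int|w_i-A_\delta w_i|\,d\mu\le \int\dashint_{B(y,\delta)}|w_i(z)-w_i(y)|\,d\mu(z)\,d\mu(y)\lesssim \delta^s\|w_i\|_{\dot W^{s,1}(X)},
\]
because $\mu(B_{y,z})\lesssim\mu(B(y,\delta))$ for $d(y,z)<\delta$. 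For fixed $\delta$, the family $(A_\delta w_i)_i$ is equicontinuous and uniformly bounded on the compact set $\overline{B(x,2r)}$; this set is compact by completeness and doubling, which make $X$ proper. Arzelà--Ascoli together with a diagonal argument then yields $L^1$-convergence of a subsequence. This is the step I expect to need the most care, mainly in verifying the equicontinuity of $A_\delta w_i$ in a general doubling space; alternatively, one can cite a known fractional compactness theorem.

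Set $u:=f+w$. Along a further subsequence $v_i\to u$ a.e., so $u\ge\psi$ a.e. on $\Omega$ and $u=f$ a.e. on $X\setminus\Omega$. Fatou's lemma applied to the double integral gives $\|u\|_{\dot W^{s,1}(X)}\le\liminf_i\|v_i\|_{\dot W^{s,1}(X)}=m$. Hence $u\in\mathcal K_{\psi,f}(\Omega)$ is a solution. If $f\in W^{s,1}(X)$, then $u=f+w\in W^{s,1}(X)$, since $w\in L^1(X)$ by Fatou and $u\in\dot W^{s,1}(X)$.

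For the superminimizer property, note first that $u\in\dot W^{s,1}(X)\subset\dot W^{s,1}(\Omega)$. Let $\eta\in W^{s,1}_0(\Omega)$ with $\eta\ge0$. Then $u+\eta\ge\psi$ a.e. on $\Omega$ and $u+\eta=f$ a.e. on $\Omega^c$, so $u+\eta\in\mathcal K_{\psi,f}(\Omega)$, and minimality gives $\|u\|_{\dot W^{s,1}(X)}\le\|u+\eta\|_{\dot W^{s,1}(X)}$. Split $X\times X$ into the pieces $\Omega\times\Omega$, $\Omega\times\Omega^c$, $\Omega^c\times\Omega$ and $\Omega^c\times\Omega^c$. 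On $\Omega^c\times\Omega^c$ the integrands of $u$ and $u+\eta$ agree, and that common term is finite because $u\in\dot W^{s,1}(X)$. By the symmetry of the kernel, the two mixed pieces are equal. All terms are finite, so subtracting gives exactly $F(u,u+\eta)\le0$.
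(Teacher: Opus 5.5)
Your proposal is correct and follows essentially the same route as the paper: a minimizing sequence, the isoperimetric inequality \eqref{isoperimetric-func.} to make $v_i-f$ bounded in $W^{s,1}(X)$, a compactness theorem giving a.e.\ convergence, Fatou's lemma, and the competitor $u+\eta$ for the superminimizer property, which you check in more detail than the paper does.

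For the compactness step the paper uses exactly your fallback, citing \cite[Theorem 5.2]{Kl25}, and you should do the same. Your Arzel\`a--Ascoli sketch does not work as written: with plain ball averages, equicontinuity can fail in a general doubling space, because $y\mapsto\mu(B(y,\delta))$ and $y\mapsto\int_{B(y,\delta)}w\,d\mu$ need not be continuous. If you wanted a self-contained argument, you would need a different approximation, for instance discrete convolutions.
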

	\begin{proof}
		Since $\mathcal K_{\psi,f}(\Omega)\neq \emptyset$, there is
		a sequence $\{ u_i\}_{i\in\mathbb N}$ of functions $u_i\in \mathcal{K}_{\psi,f}(\Omega)$ such that
		\[
		\lim_{i\to \infty} \|u_i\|_{\dot W^{s,1}(X)}
		=I
		:=\inf\{\|u\|_{\dot W^{s,1}(X)}: u\in \mathcal{K}_{\psi,f}(\Omega)\}< \infty.
		\]
		We may assume that $\|u_i\|_{\dot W^{s,1}(X)}<I+1$.
		By  the isoperimetric inequality \eqref{isoperimetric-func.},
		we have for each $i\in \mathbb{N}$
		\begin{align}
			\int_X|u_i-f|\,d\mu
			&=\int_{\Omega}|u_i-f|\,d\mu\quad \textrm{since\ }u_i=f\ {\rm a.e.\ on\ }X\setminus \Omega\notag \\
			&\le
			{C_1r^s}
			\|u_i-f\|_{\dot W^{s,1}(X)}\notag \\
			&\leq
			{C_1r^s}
			\left(I+1+\|f\|_{\dot W^{s,1}(X)} \right), \label{eq2.7-16dec}
		\end{align}
		which implies that $u_i-f$ is  bounded in $L^1(X)$.
		Combining this with the fact that both $u_i$ and $f$ are bounded in $\dot W^{s,1}(X)$, we obtain that
		$u_i-f$ is a bounded sequence in $W^{s,1}(X)$. Thus by \cite[Theorem 5.2]{Kl25}, there exists a subsequence (not relabeled and denoted $\{u_i-f\}_{i\in\mathbb N}$) and a function $v\in W^{s,1}(X)$ such that $u_i-f\to v$ in $L^1\loc(X)$. We can select a further subsequence (still not relabeled and denoted $\{u_i-f\}_{i\in\mathbb N}$) such that $u_i(x)-f(x)\to v(x)$ for a.e. $x\in X$. Notice from $u_i\geq \psi$ a.e. on $\Omega$ and $u_i=f$ a.e. on $X\setminus\Omega$ that $v\geq \psi-f$ a.e. on $\Omega$ and $v=0$ a.e. on $X\setminus\Omega$. Letting $u:=v+f$, we then have that  $u\geq \psi $ a.e. on $\Omega$ and $u=f$ a.e. on $X\setminus \Omega$. 
		Moreover, $u_i\to u$ in $L^1\loc (X)$, and so  Fatou's lemma gives  that
		\[
		\|u\|_{\dot W^{s,1}(X)}\leq \liminf_{i\to \infty}\|u_i\|_{\dot W^{s,1}(X)}=I,
		\]
		which implies $u\in \mathcal K_{\psi,f}(\Omega)$ with minimal energy. Therefore, $u\in\dot W^{s,1}(X)$ is a $\mathcal K_{\psi,f}(\Omega)$-solution.
		
		We now prove the last two claims. First, 
		notice that $u+\varphi\in \mathcal K_{\psi,f}(\Omega)$ for every nonnegative function $\varphi\in W^{s,1}_0(\Omega)$, and hence by the definition of a $\mathcal K_{\psi,f}(\Omega)$-solution, we obtain  that $u\in \dot W^{s,1}(X)$ is an $s$-superminimizer 
		{in $\Omega$.}
		
		Moreover,
		if $f\in W^{s,1}(X)$, then  we have from \eqref{eq2.7-16dec} that the above sequence $u_i$ is bounded in $L^1(X)$  and $u_i\to u$ in $L^1_{\rm loc}$, and so Fatou's lemma gives that $u\in L^1(X)$. This implies that $u\in W^{s,1}(X)$ since $u\in \dot W^{s,1}(X)$.
		The proof is completed.
	\end{proof}
	By  Tonelli's theorem one can obtain a coarea formula, see e.g. \cite{KLLZ25} in the metric
	space setting and \cite{ Vi91}  in the Euclidean space: for a locally integrable function $u$
	on an open set $\Omega\subset X$,
	\begin{equation}\label{eq2.7-27nov}
		\|u\|_{\dot W^{s,1}(\Omega)}=\int_{-\infty}^\infty \| {\bf 1}_{\{ u>t\}}\|_{\dot W^{s,1}(\Omega)} \,dt.
	\end{equation}
	\begin{proposition}\label{prop2.5-22nov}		
		Let 
		$x\in X$ and $0<r<\frac{1}{4}\diam X$.
		Let $\Omega\subset B(x,r)$ be a nonempty open set, and let $A\subset \Omega$.
		Suppose that $\mathcal K_{A,0}(\Omega)\neq \emptyset$. 
		Then  there exists  a $\mathcal K_{A,0}(\Omega)$-solution $E$ with $A\subset E\subset \Omega$.
	\end{proposition}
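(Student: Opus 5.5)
By Proposition~\ref{prop2.4-22nov} with $\psi={\bf 1}_A$ and $f=0$, there is a $\mathcal K_{A,0}(\Omega)$-solution $u\in W^{s,1}(X)$. The plan is to use truncation to reduce to $0\le u\le 1$, and then the coarea formula \eqref{eq2.7-27nov} to pick a superlevel set of $u$ that is itself a solution.

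\textbf{Truncation.} Replace $u$ by $w:=\min\{\max\{u,0\},1\}$. Since ${\bf 1}_A\in\{0,1\}$, we still have $w\ge {\bf 1}_A$ a.e.\ on $\Omega$ and $w=0$ a.e.\ on $X\setminus\Omega$. Truncation is $1$-Lipschitz pointwise, so $|w(x)-w(y)|\le|u(x)-u(y)|$ and hence $\|w\|_{\dot W^{s,1}(X)}\le\|u\|_{\dot W^{s,1}(X)}$. Thus $w$ is again a solution, with $0\le w\le 1$.

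\textbf{Coarea argument.} For $t\in(0,1)$ set $E_t:=\{w>t\}$. Then ${\bf 1}_{E_t}\ge{\bf 1}_A$ a.e.\ on $\Omega$, because $w\ge1>t$ a.e.\ on $A$. Also ${\bf 1}_{E_t}=0$ a.e.\ off $\Omega$. Hence ${\bf 1}_{E_t}\in\mathcal K_{A,0}(\Omega)$ as soon as its energy is finite, and by minimality
\[
\|{\bf 1}_{E_t}\|_{\dot W^{s,1}(X)}\ge \|w\|_{\dot W^{s,1}(X)}=:I .
\]
The coarea formula \eqref{eq2.7-27nov} on $X$, using that $\{w>t\}$ equals $X$ for $t<0$ and is empty for $t\ge1$, gives
\[
I=\int_0^1\|{\bf 1}_{E_t}\|_{\dot W^{s,1}(X)}\,dt .
\]
An integrand that is $\ge I$ on a set of measure one and integrates to $I$ must equal $I$ for a.e.\ $t\in(0,1)$. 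For such $t$, ${\bf 1}_{E_t}$ is a solution.

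\textbf{Pointwise containment.} Membership in $\mathcal K_{A,0}(\Omega)$ only gives ``a.e.'' statements, whereas we need $A\subset E\subset\Omega$ everywhere. Fix a good $t$ and set
\[
E:=(E_t\cap\Omega)\cup A .
\]
This changes $E_t$ only on a null set: $\mu(E_t\setminus\Omega)=0$ since $w=0$ a.e.\ off $\Omega$, and $\mu(A\setminus E_t)=0$ since $w\ge1$ a.e.\ on $A$. Here $\mu$ is the outer measure, so ``a.e.\ on $A$'' makes sense even for nonmeasurable $A$. Therefore ${\bf 1}_E={\bf 1}_{E_t}$ a.e., so ${\bf 1}_E$ has the same energy and constraints, and $E$ is a solution with $A\subset E\subset\Omega$.

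The main obstacle I expect is the measurability of $A$: if $A$ is not measurable, then $E$ may fail to be measurable. I would handle this by replacing $A$ by a measurable hull $\tilde A\supset A$ with $\mu(\tilde A\setminus A)=0$, using Borel regularity, and taking $\tilde A\subset\Omega$. The a.e.\ constraint is identical for $A$ and $\tilde A$, so one can build $E$ from $\tilde A$ in the same way and still have $A\subset E\subset\Omega$.
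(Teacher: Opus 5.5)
Your proof is correct and follows the paper's route: take a solution $u\in W^{s,1}(X)$ from Proposition~\ref{prop2.4-22nov}, use the coarea formula \eqref{eq2.7-27nov} to find $t\in(0,1)$ for which $\{u>t\}$ is admissible with energy at most $\|u\|_{\dot W^{s,1}(X)}$, and then adjust on a null set. Your truncation is harmless but unnecessary, since the paper uses $\int_0^1\le\int_{-\infty}^{\infty}$ to get such a $t$ directly. Your measurability worry is moot: $A\setminus E_t$ has outer measure zero and is therefore $\mu$-measurable, so $E=(E_t\cap\Omega)\cup(A\setminus E_t)$ is already measurable. The hull fix is therefore not needed, and as phrased it is off, because a nonmeasurable $A$ has no measurable $\tilde A\supset A$ with $\mu(\tilde A\setminus A)=0$.
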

	\begin{proof}
		By Proposition \ref{prop2.4-22nov}, 
		there is   a $\mathcal K_{A,0}(\Omega)$-solution $u\in  W^{s,1}(X)$. By the coarea
		formula \eqref{eq2.7-27nov},  there exists $t\in (0,1)$ such that
		$\|{\bf 1} _{\{u>t\}}\|_{\dot W^{s,1}(X)}\leq \|u\|_{\dot W^{s,1}(X)} $. For such $0<t<1$,
		letting $E:=\{u>t\}$, we have from $u\in \mathcal K_{A,0}(\Omega)$ that ${\bf 1}_E\geq {\bf 1}_A$
		a.e. on $\Omega$ and ${\bf 1}_E=0$ a.e.  on $\Omega^c$.
		Thus $E\in \mathcal K_{A,0}(\Omega)$, and so  $E$ is a $\mathcal K_{A,0}(\Omega)$-solution.
		After possibly modifying $E$ in a set of zero measure, we get $A\subset E\subset\Omega$.
	\end{proof}
	
	\subsection{Capacity, thinness, and Hausdorff measure}\label{subsection5.1}\
	
	As before, let $0<s<1$. We define the \emph{$(s,1)$-capacity} of $A\subset X$ by
	\[
	C_{s,1}(A):=\inf \|u\|_{W^{s,1}(X)},
	\]
	where the infimum is taken over all functions $u\in W^{s,1}(X)$  satisfying $u\equiv 1$ in a neighborhood of $A$. 
	
	We denote $A\Subset F$ if $\overline A$ is a compact subset of $F$, and we recall that
	$\supp u:=\overline{\{ x\in X: u(x)\neq 0\}}$ is the support of $u$.
	For a bounded set $A$ and a bounded open set $F$ with $A\Subset F$, we define the \emph{condenser $(s,1)$-capacity} by 
	\[
	\caps(A,F):=\inf  \|u\|_{\dot W^{s,1}(X)},
	\]
	where the infimum is taken over all  functions $u\in\dot W^{s,1}(X)$ such that
	$u\equiv 1$ in a neighbourhood of $A$ and $\supp u\subset F$. Such $u$ are called
	\emph{admissible} for $\caps(A,F)$.  For each given bounded open set $F$,  the definition
	gives that the condenser capacity $\caps(.,F)$ is an outer capacity in the sense that
	for any $A\Subset F$, 
	\begin{equation}\label{eq2.8-22nov}
		\caps(A,F)=\inf\{ {\rm cap}_{s,1}(G,F): A\subset G\Subset F,\ G \text{\ is open} \}.
	\end{equation}
	Recall that $(X,d,\mu)$ is a complete, connected metric measure space equipped with a doubling measure.  Then we  have from \cite[Proposition 4.1]{BB23} that for all balls $B(x,r)\subset B(x,R)\subsetneq X$ with $\theta R \leq 2r\leq R$ for a given $0<\theta<1$, 
	\begin{equation}\label{eq4.1-28oct}
		\frac{1}{C_2}\frac{\mu(B(x,r))}{r^s}\le\caps (B(x,r), B(x,R))\le C_2 \frac{\mu(B(x,r))}{r^s},
	\end{equation}
	where $C_2$ depends only on $\theta$, $s$ and the doubling constant.

	\begin{lemma}\label{lem4.3-29oct}
		
		Let $x\in X$ and $0<r<r_1<r_2<\frac{1}{4}\diam X$.
		Then there is a constant $C>0$  depending only on $s$ and the doubling constant such that
		for every $A\subset B(x,r)$,
		\begin{align}
			\caps(A, B(x,r_2))\leq&\, \caps(A,B(x,r_1))\label{e4.2-29oct} \\
			\leq&\,  C\left(1+\frac{r_2^s}{(r_1-r)^s}\right)\caps(A, B(x,r_2)). \label{e4.3-29oct}
		\end{align} 
	\end{lemma}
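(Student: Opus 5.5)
The first inequality \eqref{e4.2-29oct} is immediate from the definition. Any $u$ admissible for $\caps(A,B(x,r_1))$ has $u\equiv 1$ in a neighbourhood of $A$ and $\supp u\subset B(x,r_1)\subset B(x,r_2)$. Since $A\subset B(x,r)$, we also have $A\Subset B(x,r_2)$, so $u$ is admissible for $\caps(A,B(x,r_2))$. Taking the infimum gives the claim.

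For \eqref{e4.3-29oct} I take an admissible $u$ for $\caps(A,B(x,r_2))$. After truncating to $\min\{\max\{u,0\},1\}$, which does not increase $\|u\|_{\dot W^{s,1}(X)}$, I may assume $0\le u\le 1$. Let $\eta$ be the Lipschitz cutoff $\eta(y)=\min\{1,\max\{0,(\rho-d(x,y))/(\rho-r)\}\}$ with $\rho:=(r+r_1)/2$. It equals $1$ on $B(x,r)$, has support in $\overline{B(x,\rho)}\subset B(x,r_1)$, and is $L$-Lipschitz with $L\simeq 1/(r_1-r)$. The function $v:=u\eta$ is then admissible for $\caps(A,B(x,r_1))$.

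To estimate its energy I use the pointwise bound
\[
|v(y)-v(z)|\le |u(y)-u(z)|+u(z)\,|\eta(y)-\eta(z)|,
\]
which gives
\[
\|v\|_{\dot W^{s,1}(X)}\le \|u\|_{\dot W^{s,1}(X)}+\int_X u(z)\int_X\frac{|\eta(y)-\eta(z)|}{\mu(B_{y,z})d(y,z)^s}\,d\mu(y)\,d\mu(z).
\]
For the inner integral I split into $d(y,z)<t$ and $d(y,z)\ge t$, with $t:=r_1-r$. In the first region I use $|\eta(y)-\eta(z)|\le L\,d(y,z)$ together with the standard dyadic-annuli estimate
\[
\int_{B(z,t)}\frac{d(y,z)^{1-s}}{\mu(B(z,d(y,z)))}\,d\mu(y)\lesssim_s t^{1-s},
\]
where doubling lets me replace $\mu(B_{y,z})$ by $\mu(B(z,d(y,z)))$. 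In the second region I use $|\eta(y)-\eta(z)|\le 1$ and the tail estimate $\int_{X\setminus B(z,t)}\frac{d\mu(y)}{\mu(B(z,d(y,z)))d(y,z)^s}\lesssim_s t^{-s}$, again via annuli. Together these bound the inner integral by $C t^{-s}=C(r_1-r)^{-s}$, so
\[
\|v\|_{\dot W^{s,1}(X)}\le \|u\|_{\dot W^{s,1}(X)}+C(r_1-r)^{-s}\|u\|_{L^1(X)}.
\]

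The main step is to control $\|u\|_{L^1(X)}$ by the energy. Since $\supp u\subset B(x,r_2)$ and $r_2<\frac14\diam X$, the isoperimetric inequality \eqref{isoperimetric-func.} gives $\|u\|_{L^1}\le C_1 r_2^s\|u\|_{\dot W^{s,1}(X)}$. Combining,
\[
\|v\|_{\dot W^{s,1}(X)}\le C\left(1+\frac{r_2^s}{(r_1-r)^s}\right)\|u\|_{\dot W^{s,1}(X)},
\]
and taking the infimum over admissible $u$ yields \eqref{e4.3-29oct}.

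I expect the annuli estimates to be the main technical point. The tail estimate uses only doubling, since the sum $\sum_k (2^kt)^{-s}$ converges. The near estimate also needs only doubling, with $\sum_k (2^{-k}t)^{1-s}$ converging because $s<1$. The constants therefore depend only on $s$ and $C_\mu$.
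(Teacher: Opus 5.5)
Your proof is correct and follows essentially the same route as the paper: you multiply an admissible $u$ for $\caps(A,B(x,r_2))$ by a Lipschitz cutoff with constant $\simeq (r_1-r)^{-1}$, bound the cross term by a dyadic-annuli estimate of order $(r_1-r)^{-s}\|u\|_{L^1(X)}$, and control $\|u\|_{L^1(X)}$ by $C_1r_2^s\|u\|_{\dot W^{s,1}(X)}$ via \eqref{isoperimetric-func.}. The only cosmetic difference is your preliminary truncation to $0\le u\le 1$, which the paper does not need because it simply carries $|u|$ in the cross term.
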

	\begin{proof}
		We may assume that $A\neq\emptyset$.
		The first inequality is obtained by the definition of the condenser $(s,1)$-capacity.
		We will prove the second inequality. Let $u$ be an arbitrary admissible function for
		$\caps(A, B(x,r_2))$. Let $\eta$ be a $\frac{2}{r_1-r}$-Lipschitz function such that
		$\eta\equiv 1$ in a neighbourhood of $A$, $\supp \eta\Subset B(x,r_1)$ and $0\leq \eta\leq 1$.
		Then $h:=u\eta$ is admissible for $\caps(A, B(x, r_1))$.
		Let $A_j:=B_{j+1}\setminus B_{j}$ where $B_j:= B(y, 2^{j} (r_1-r))$ and $j\in\mathbb Z$ for
		$y\in X$.
		We have from the doubling property that for $y\in X$,
		\begin{align}
			&\quad \int_{X}  \frac{  |\eta(y)-\eta(x)|}{\mu(B_{x,y}) d(x,y)^s}  \,d\mu(x)\notag \\
			&= \ \sum_{j=-\infty}^0 \int_{A_j} \frac{  |\eta(y)-\eta(x)|}{\mu(B_{x,y}) d(x,y)^s}  \,d\mu(x) + \sum_{j=1}^\infty \int_{A_j} \frac{  |\eta(y)-\eta(x)|}{\mu(B_{x,y}) d(x,y)^s}  \,d\mu(x) \notag \\
			&\leq   \sum_{j=-\infty}^0 \int_{A_j} \frac{ \frac{2}{r_1-r}d(x,y) }{\mu(B_j) d(x,y)^s}  \,d\mu(x) +  \sum_{j=1}^\infty \int_{A_j} \frac{2}{\mu(B_j) d(x,y)^s}  \,d\mu(x) \notag \\
			&\simeq    \frac{1}{r_1-r}  \sum_{j=-\infty}^0 (2^{j}(r_1-r))^{1-s} +  \sum_{j=0}^\infty (2^j(r_1-r))^{-s}\notag \\
			&\simeq    \frac{1}{(r_1-r)^s} \left( \frac{1}{1-2^{-(1-s)}} + \frac{1}{1-2^{-s}} \right)\notag \\
			&\lesssim \frac{1}{s(1-s)}\frac{1}{(r_1-r)^s}, \label{eq2.6-29oct}
		\end{align}
		where the third line is obtained  since $\eta$ is $\frac{2}{r_1-r}$-Lipschitz and
		$0\leq \eta\leq 1$, and the fourth line is obtained since the radius of $B_j$ is $2^j(r_1-r)$. 
		Hence 
		\begin{align}
			&\quad \caps(A, B(x, r_1))\notag \\
			&\leq   \int_X\int_X \frac{|h(x)-h(y)|}{\mu(B_{x,y})d(x,y)^s}\,d\mu(x)\,d\mu(y)\notag \\
			&\leq   \int_X\int_X \frac{|u(x)-u(y)|\eta(x)}{\mu(B_{x,y})d(x,y)^s}\,d\mu(x)\,d\mu(y)\notag \\
			&\quad + \int_X\int_X\frac{|u(y)||\eta(x)-\eta(y)|}{\mu(B_{x,y})d(x,y)^s}\,d\mu(x)\,d\mu(y)\notag \\
			&\lesssim  \int_X\int_X \frac{|u(x)-u(y)|}{\mu(B_{x,y})d(x,y)^s}\,d\mu(x)\,d\mu(y)\notag \\ 
			&\quad +\frac{1}{s(1-s)} \frac{1}{(r_1-r)^s }\int_{B(x,r_2)} {|u(y)|} \,d\mu(y) \notag\\
			&\overset{\eqref{isoperimetric-func.}}{\lesssim_s}
			\int_{X} \int_{X}\frac{|u(x)-u(y)|}{\mu(B_{x,y})d(x,y)^s} \,d\mu(x)  \,d\mu(y)\notag\\
			&\, +\frac{r_2^s}{(r_1-r)^s}\int_{X} \int_{X}\frac{|u(x)-u(y)|}{\mu(B_{x,y})d(x,y)^s} \,d\mu(x)  \,d\mu(y). \label{equ4.2-29oct}
		\end{align}
		Here the comparison constant in the last estimate depends only on $s$ and the doubling constant.
		Since $u$ is an arbitrary admissible function for $\caps(A, B(x, r_2))$, this gives the desired inequality.
	\end{proof}
	\begin{lemma}\label{lemma4.6}

		Let 
		$x\in X$ and $0<r<R<\frac{1}{4}\diam X$.
		Let   $A\subset B(x,r)$.
		Then there exists a $\mathcal{K}_{A,0}(B(x,R))$-solution $E$ satisfying $A\subset E\subset B(x,R)$ and
		\[
		\|{\bf 1}_{E}\|_{\dot W^{s,1}(X)}\leq \caps(A,B(x,R)).
		\]
		Moreover, every $\mathcal{K}_{A,0}(B(x,R))$-solution $F$ satisfies
		$A\subset F\subset B(x,R)$ (after possibly modifying in a set of zero measure) and
		\begin{equation}\label{eq2.7-19nov}
			\|{\bf 1}_{F}\|_{\dot W^{s,1}(X)}\leq \caps(A,B(x,R)).
		\end{equation}
	\end{lemma}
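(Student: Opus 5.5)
We need to prove Lemma \ref{lemma4.6}. We may assume $\caps(A,B(x,R))<\infty$; this holds because a Lipschitz cutoff that equals $1$ on $B(x,r)$ and is supported in $B(x,R)$ is admissible. Indeed, by the computation \eqref{eq2.6-29oct}, such a cutoff has finite $\dot W^{s,1}(X)$-norm.

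The plan is to compare solutions of the obstacle problem with admissible functions for the condenser capacity. Let $u$ be admissible for $\caps(A,B(x,R))$, so $u\equiv 1$ in a neighbourhood of $A$ and $\supp u\subset B(x,R)$. Then $u\ge {\bf 1}_A$ a.e. on $B(x,R)$ and $u=0$ on $X\setminus B(x,R)$. Hence $u\in\mathcal K_{A,0}(B(x,R))$, and in particular $\mathcal K_{A,0}(B(x,R))\neq\emptyset$.

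First we obtain existence. Since $A\subset B(x,r)\subset B(x,R)$ with $R<\frac14\diam X$, Proposition \ref{prop2.5-22nov} applies with $\Omega=B(x,R)$. It yields a $\mathcal K_{A,0}(B(x,R))$-solution $E$ with $A\subset E\subset B(x,R)$.

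Next we prove the energy bound. Let $F$ be any $\mathcal K_{A,0}(B(x,R))$-solution, with $E$ as a special case. By minimality, $\|{\bf 1}_F\|_{\dot W^{s,1}(X)}\le \|u\|_{\dot W^{s,1}(X)}$ for every admissible $u$ from the first paragraph. Taking the infimum over admissible $u$ gives \eqref{eq2.7-19nov}, and in particular the stated bound for $E$.

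It remains to check the inclusions $A\subset F\subset B(x,R)$ for an arbitrary solution $F$. Since $F$ is a solution, ${\bf 1}_F\ge {\bf 1}_A$ a.e. on $B(x,R)$, and ${\bf 1}_F=0$ a.e. outside $B(x,R)$. Therefore $\mu(A\setminus F)=0$ and $\mu(F\setminus B(x,R))=0$. Replacing $F$ by $(F\cup A)\cap B(x,R)$ changes $F$ only on a null set, so neither the energy nor membership in $\mathcal K_{A,0}(B(x,R))$ is affected. This gives $A\subset F\subset B(x,R)$.

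One point needs attention here. If $A$ is non-measurable, "a.e. on $\Omega$" should be read via the outer measure, so the set $A\setminus F$ is $\mu$-null and adding it keeps ${\bf 1}_F$ measurable up to a null set, since $\mu$ is complete as a Borel-regular outer measure. This is the only delicate point; the rest is direct from the definitions and Proposition \ref{prop2.5-22nov}.
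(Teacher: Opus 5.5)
Your proof is correct and follows essentially the paper's route: an admissible function gives $\mathcal K_{A,0}(B(x,R))\neq\emptyset$, Proposition \ref{prop2.5-22nov} gives the solution $E$, and minimality against admissible functions gives the energy bound. The differences are cosmetic. You take the infimum directly for every solution $F$, whereas the paper uses an $\varepsilon$-argument for $E$ and then compares $F$ with $E$. You get finiteness of the capacity from an explicit Lipschitz cutoff rather than from Lemma \ref{lem4.3-29oct} and \eqref{eq4.1-28oct}.

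There is one small slip, which the paper also makes. An admissible function for $\caps(A,B(x,R))$ need not be nonnegative, so $u\ge {\bf 1}_A$ on $B(x,R)\setminus A$ is not automatic. Replace $u$ by $u_+$: it is still admissible, it lies in $\mathcal K_{A,0}(B(x,R))$, and $\|u_+\|_{\dot W^{s,1}(X)}\le \|u\|_{\dot W^{s,1}(X)}$, so taking the infimum is unaffected.
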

	\begin{proof}
		Let $\varepsilon>0$ be arbitrary. 
		We have that $\caps(A,B(x,R))<\infty$ because
		\begin{align*}
			\caps(A,B(x,R)) & \overset{\eqref{e4.2-29oct} \& \eqref{e4.3-29oct}}{\lesssim_{r,R,s}} \caps (A, B(x,2r))\\
			&\leq \caps (B(x,r), B(x,2r)) \overset{\eqref{eq4.1-28oct}}{<}\infty.
		\end{align*}
		By the definition of the condenser capacity, we find $u\in \dot W^{s,1}(X)$ with $u= 1$ in a neighbourhood of $A$, $\supp u \Subset  B(x,R)$, and
		\[
		\caps(A,B(x,R))+\varepsilon\ge \|u\|_{\dot W^{s,1}(X)}.
		\]
		Notice that such $u\in \mathcal{K}_{A,0}(B(x,R))$, and so
		$\mathcal K_{A,0}(B(x,R))\neq\emptyset$. 
		By Proposition \ref{prop2.5-22nov}, 
		there exists a $\mathcal{K}_{A,0}(B(x,R))$-solution $E\subset X$. By modifying in a set of zero measure, we may assume that $A\subset E\subset B(x,R)$.
		Combining this with the fact that $u\in \mathcal{K}_{A,0}(\Omega)$, the above inequality gives that 
		$\|{\bf 1}_{E}\|_{\dot W^{s,1}(X)}\leq \caps(A,B(x,R)) +\varepsilon$. Letting $\varepsilon\to0$, this gives the main claim.
		The last claim \eqref{eq2.7-19nov} is obtained by the main claim and the fact that $\|{\bf 1}_{F}\|_{\dot W^{s,1}(X)}\leq \|{\bf 1}_{E}\|_{\dot W^{s,1}(X)}$ because $F$ is a $\mathcal{K}_{A,0}(B(x,R))$-solution and ${\bf 1}_E\in \mathcal K_{A,0}(B(x,R))$.
	\end{proof}

	For $A\subset X$ and a point $x\in X$,
	$A$ is said to be \emph{$s$-thin} at $x$ if
	\begin{equation}\label{eq2.8-2011}
		\lim_{r\to0^+} \frac{\caps(A\cap B(x,r), B(x,2r))}{\caps(B(x,r),B(x,2r))} =0.
	\end{equation}
	If $A$ is not $s$-thin at $x$, we say that it is \emph{$s$-thick} at $x$. 
	
	\begin{lemma}\label{lem4.4-29oct}

		Let $x\in X, R>0$ and let $A\subset X$ and $M>1$ be such that
		\begin{equation}\label{eq4.6-29oct}
			\lim_{i\to\infty}\frac{\caps(A\cap B(x,M^{-i}R), B(x, 2M^{-i}R))}{\caps(B(x,M^{-i}R), B(x, 2M^{-i}R))}=0.
		\end{equation}
		Then $A$ is $s$-thin at $x$, i.e. 
		\[
		\lim_{r\to0^+}  \frac{\caps(A\cap B(x,r), B(x,2r))}{\caps(B(x,r),B(x,2r))} =0.
		\]
	\end{lemma}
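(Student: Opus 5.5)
Given $r>0$ small, I pick $i\in\mathbb N$ with $M^{-i-1}R\le r< M^{-i}R$ and write $r_i:=M^{-i}R$. The idea is to compare the ratio at scale $r$ with the ratio at scale $r_i$, at the cost of a constant depending only on $M$, $s$ and the doubling constant. Letting $r\to0^+$ forces $i\to\infty$, so \eqref{eq4.6-29oct} then gives the claim. I may assume $R<\frac18\diam X$; this can be arranged by discarding finitely many $i$, which in any case are irrelevant as $r\to0$. The case of bounded $X$ needs the radii to be small, and that is exactly what this reduction provides.

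\emph{Numerator.} Since $A\cap B(x,r)\subset A\cap B(x,r_i)$, monotonicity of the condenser capacity in the first argument gives
\[
\caps(A\cap B(x,r),B(x,2r))\le \caps(A\cap B(x,r_i),B(x,2r)).
\]
I then need to shrink the condenser from $B(x,2r)$ back to $B(x,2r_i)$, where $2r\ge 2r_i/M$. I apply Lemma~\ref{lem4.3-29oct}, specifically \eqref{e4.3-29oct}, with $r_1=2r$ and $r_2=2r_i$. The obstacle is that the set $A\cap B(x,r_i)$ is contained in $B(x,r_i)$, and $r_i$ may exceed $r_1=2r$ when $M>2$, so the lemma does not apply directly. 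To fix this I do not use $A\cap B(x,r_i)$ at all: I keep $A\cap B(x,r)\subset B(x,r)$ and apply the lemma with inner radius $r$, $r_1=2r$, $r_2=2r_i$ (assuming $r_i>r$, which holds). This yields
\[
\caps(A\cap B(x,r),B(x,2r))\le C\Bigl(1+\frac{(2r_i)^s}{r^s}\Bigr)\caps(A\cap B(x,r),B(x,2r_i))\le C(1+(2M)^s)\caps(A\cap B(x,r_i),B(x,2r_i)),
\]
where the last step uses $r_i/r\le M$ together with monotonicity in the first argument.

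\emph{Denominator.} By \eqref{eq4.1-28oct} with $\theta=1$, $\caps(B(x,r),B(x,2r))\simeq_s \mu(B(x,r))/r^s$, and likewise at scale $r_i$. Doubling and $r_i\le Mr$ give $\mu(B(x,r_i))\lesssim_M\mu(B(x,r))$, while $r^s\le r_i^s$. Hence
\[
\caps(B(x,r_i),B(x,2r_i))\lesssim_{M,s}\caps(B(x,r),B(x,2r)).
\]

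Dividing the numerator bound by the denominator bound, the ratio at scale $r$ is at most $C(M,s,C_\mu)$ times the ratio at scale $r_i$, and the latter tends to $0$ by \eqref{eq4.6-29oct}. The only delicate point is the mismatch of radii in the numerator step, which is resolved by keeping the smaller set $A\cap B(x,r)$ until after the condenser has been enlarged.
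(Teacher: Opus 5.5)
Your proposal is correct and is essentially the paper's argument: apply \eqref{e4.3-29oct} to $A\cap B(x,r)$ with $r_1=2r$ and $r_2=2M^{-i}R$, compare the denominators via \eqref{eq4.1-28oct} and doubling, and only then enlarge the set to $A\cap B(x,M^{-i}R)$ by monotonicity. One minor point: \eqref{eq4.1-28oct} is stated for $\theta\in(0,1)$ rather than $\theta=1$, but with outer radius $2r$ the condition $\theta\cdot 2r\le 2r$ holds for any such $\theta$, so nothing changes.
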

	\begin{proof}
		For $r\in (M^{-i-1}R,\, M^{-i}R]$ with $2M^{-i}R<\frac{1}{4}\diam X$, applying \eqref{eq4.1-28oct} and \eqref{e4.3-29oct} 
		{to}
		$r_1=2r,\, r_2=2M^{-i}R$ yields
		\[
		\frac{\caps(A\cap B(x,r),\, B(x,2r))}{\caps(B(x,r),\, B(x,2r))}\lesssim_{M,s}
		\frac{\caps(A\cap B(x,r),\, B(x,2M^{-i}R))}{\caps(B(x,M^{-i}R),\, B(x,2M^{-i}R))}.
		\]
		Since $B(x,r)\subset B(x,M^{-i}R)$, we obtain that
		\[
		\frac{\caps(A\cap B(x,r),\, B(x,2r))}{\caps(B(x,r),\, B(x,2r))}\lesssim_{M,s}
		\frac{\caps(A\cap B(x,M^{-i}R),\, B(x,2M^{-i}R))}{\caps(B(x,M^{-i}R),\, B(x,2M^{-i}R))}.
		\]
		By \eqref{eq4.6-29oct}, the right-hand side tends to $0$ as $i\to\infty$. This completes the proof.
	\end{proof}

	\begin{lemma}\label{openset}
		
		Let  $x\in X\setminus A$ be such that  $A$ is $s$-thin at $x$. Then there exists an open set $W$ such that $A\subset W$ and  $W$ is $s$-thin at $x$.

	\end{lemma}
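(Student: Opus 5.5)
We will build $W$ from open neighbourhoods of the pieces of $A$ lying in dyadic annuli around $x$, choosing them with capacity tolerances that decay fast enough. Set $r_i:=2^{-i}R$ for some $R<\frac18\diam X$. Consider the annular pieces $A_i:=A\cap \big(B(x,r_i)\setminus \overline{B}(x,r_{i+2})\big)$ for $i\ge 0$. Since $x\notin A$, these pieces together with $A\setminus B(x,r_0)$ cover $A$. By the outer property \eqref{eq2.8-22nov}, for each $i$ we choose an open set $G_i$ with
\[
A_i\subset G_i\subset B(x,r_i)\setminus \overline{B}(x,r_{i+3})
\]
and
\[
\caps(G_i,B(x,2r_i))\le \caps(A_i,B(x,2r_i))+2^{-i}\caps(B(x,r_i),B(x,2r_i)).
\]
To keep $G_i$ inside the annulus, we intersect with the open annulus; this does not increase capacity because the condenser capacity is monotone in the first argument. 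We then set
\[
W:=\bigcup_{i\ge0}G_i\cup \big(X\setminus \overline{B}(x,r_1)\big).
\]
This $W$ is open and contains $A$, and since the outer part avoids $B(x,r_1)$ it does not affect the thinness at $x$.

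To verify thinness, we use Lemma \ref{lem4.4-29oct} with $M=2$, so it suffices to show that
\[
\frac{\caps(W\cap B(x,r_k),B(x,2r_k))}{\caps(B(x,r_k),B(x,2r_k))}\to0 .
\]
For $k\ge 2$ we have $W\cap B(x,r_k)\subset\bigcup_{i\ge k-1}G_i$. The condenser capacity is countably subadditive in the first argument, which follows from the fact that the maximum of admissible functions has energy at most the sum. Hence
\[
\caps(W\cap B(x,r_k),B(x,2r_k))\le\sum_{i\ge k-1}\caps(G_i,B(x,2r_k)).
\]
For $i\ge k$, \eqref{e4.2-29oct} gives $\caps(G_i,B(x,2r_k))\le\caps(G_i,B(x,2r_i))$. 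The term $i=k-1$ is handled with \eqref{e4.3-29oct}, or simply by shifting the indexing.

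Each resulting term is at most $\caps(A\cap B(x,r_i),B(x,2r_i))+2^{-i}\caps(B(x,r_i),B(x,2r_i))$. By \eqref{eq4.1-28oct}, $\caps(B(x,r_i),B(x,2r_i))\simeq \mu(B(x,r_i))r_i^{-s}$.

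The main obstacle is summing over $i$, because the thinness ratio $\varepsilon_i:=\caps(A\cap B(x,r_i),B(x,2r_i))/\caps(B(x,r_i),B(x,2r_i))$ only tends to zero and is not summable. The sum to control is
\[
\sum_{i\ge k}(\varepsilon_i+2^{-i})\,\mu(B(x,r_i))r_i^{-s}.
\]
This needs the comparison $\mu(B(x,r_i))r_i^{-s}\lesssim \theta^{i-k}\mu(B(x,r_k))r_k^{-s}$ for some $\theta<1$. Reverse doubling from connectedness gives $\mu(B(x,r_i))\le \xi^{i-k}\mu(B(x,r_k))$, which yields a factor $(\xi 2^{s})^{i-k}$. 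This factor need not be less than $1$.

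If it is not, we will instead use the one-step bound via \eqref{e4.3-29oct}, which controls the capacity of $G_i$ relative to the larger condenser $B(x,2r_k)$. We also fall back on a cruder but sufficient estimate: we sum only the finitely many $i$ with $r_i\ge \delta r_k$, and we bound the tail $\bigcup_{i\ge m}G_i\subset B(x,r_m)$ by the capacity of the small ball, $\caps(B(x,r_m),B(x,2r_k))$. That capacity is small relative to $\caps(B(x,r_k),B(x,2r_k))$ once $m-k$ is large, because a capacity potential of a small ball has small energy in a fixed condenser. This two-scale splitting, choosing $m-k$ large first and then $k$ large, is the step I expect to need the most care.
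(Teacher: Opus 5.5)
There is a genuine gap, and it is where you expected trouble. Because you choose the neighbourhoods $G_i$ of the annular pieces $A_i$ independently, your only tool for estimating $\caps(W\cap B(x,r_k),B(x,2r_k))$ is countable subadditivity. So you must show that $\sum_{i\ge k}\caps(G_i,B(x,2r_k))=o\big(\caps(B(x,r_k),B(x,2r_k))\big)$. Thinness only controls $\caps(A\cap B(x,r),B(x,2r))$ as a whole, not the sum of the capacities of its annular pieces. Moreover, the normalisation $\caps(B(x,r_i),B(x,2r_i))\simeq\mu(B(x,r_i))r_i^{-s}$ need not decay. For the weight $|x|^{\delta}$ of Example~\ref{ex3.9-15dec} it grows like $r_i^{n+\delta-s}$, so your bound $\sum_{i\ge k}\varepsilon_i\,\caps(B(x,r_i),B(x,2r_i))$ is infinite unless $\varepsilon_i$ decays geometrically. (When $\xi 2^{s}<1$ your summation does go through, but the lemma must hold in general.)

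The fallback does not rescue this. Your claim that $\caps(B(x,r_m),B(x,2r_k))$ is small compared with $\caps(B(x,r_k),B(x,2r_k))$ once $m-k$ is large is false whenever $C_{s,1}(\{x\})>0$. By monotonicity, $\caps(B(x,r_m),B(x,2r_k))\ge\caps(\{x\},B(x,2r_k))$ for every $m$. By Lemma~\ref{lem2.12-15dec} together with Lemma~\ref{lem4.4-29oct}, the ratio $\caps(\{x\},B(x,2r_k))/\caps(B(x,r_k),B(x,2r_k))$ stays bounded away from $0$ for infinitely many $k$. For the weight $|x|^\delta$, a scaling argument shows it is the same positive constant at every scale. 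This is exactly the case in which the lemma is needed later: Theorem~\ref{weakCartan-main} is used in Lemma~\ref{lem3.16-26nov} precisely when $C_{s,1}(\{x\})>0$. So the tail $\bigcup_{i\ge m}G_i$ cannot be handled by the size of the ball. It has to be handled by the thinness of $A$, which returns you to the summation problem.

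The missing idea is to arrange that $W\cap B_i$ lies in a \emph{single} controlled open set at every scale, so that no summation over scales is needed. Set $B_i:=B(x,2^{-i})$. Using \eqref{eq2.8-22nov}, pick for each $i$ one open $W_i$ with $A\cap\overline{B_i}\subset W_i\Subset 2B_i$ and $\caps(W_i,2B_i)\le\caps(A\cap\overline{B_i},2B_i)+\frac1i\caps(B_i,2B_i)$. This is a neighbourhood of the whole tail of $A$, not of one annular piece. Then intersect these on each annulus:
\[
W:=(X\setminus\overline{B_1})\cup\bigcup_{i\ge2}\Big(\Big(\bigcap_{j=1}^{i-1}W_j\Big)\setminus\overline{B_i}\Big).
\]
This set is open and contains $A$ because $x\notin A$. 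It also satisfies $W\cap B_i\subset W_i$ for every $i$, so monotonicity alone gives $\caps(W\cap B_i,2B_i)\le\caps(W_i,2B_i)$. Lemma~\ref{lem4.3-29oct} gives $\caps(A\cap\overline{B_i},2B_i)\lesssim_s\caps(A\cap 2B_i,4B_i)$, and \eqref{eq4.1-28oct} gives $\caps(B_i,2B_i)\simeq_s\caps(2B_i,4B_i)$. Lemma~\ref{lem4.4-29oct} with $M=2$ then concludes.
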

	\begin{proof}
		Let $B_i:=B(x, 2^{-i})$, $i\in\mathbb N$.
		By Lemma \ref{lem4.3-29oct}, we have
		\[
		\caps(A\cap \overline{B_i}, 2B_i)\lesssim_s  \caps(A\cap\overline{B_i}, 4B_i)\leq  \caps(A\cap {2B_i}, 4B_i).
		\]
		Recall that $\caps$ is an outer capacity in the sense \eqref{eq2.8-22nov}. Hence for each $i\in\mathbb N$, there is an open set $W_i$ such that $A\cap \overline{B_i}\subset W_i\Subset 2B_i$ and 
		\[
		{\caps(W_i, 2B_i)}\leq { \caps(A\cap \overline{B_i}, 2B_i)}+ \frac{{\caps(B_i, 2B_i)}}{i}.
		\]
		Set
		\[
		W:= (X\setminus \overline{B_1}) \cup\left( \bigcup_{i=2}^\infty \left(\bigcap_{j=1}^{i-1}W_j\right)\setminus \overline{B_{i}}\right).
		\]
		Then $W$ is open since $X\setminus \overline{B_1}$ and each $(\bigcap_{j=1}^{i-1}W_j)\setminus \overline{B_{i}}$ is open. Moreover $A\subset W$ and $W\cap B_i\subset W_i$ for each $i\in\mathbb N$. Hence from the two above estimates, we obtain
		\begin{align*}
			\frac{\caps(W\cap B_i, 2B_i)}{\caps(B_i, 2B_i)}
			&\leq \frac{\caps(W_i, 2B_i)}{\caps(B_i, 2B_i)}\\
			&\leq \frac{ \caps(A\cap \overline{B_i}, 2B_i)}{\caps(B_i, 2B_i)} + \frac{1}{i}\\
			&\lesssim_s \frac{ \caps(A\cap 2B_i, 4B_i)}{\caps(B_i, 2B_i)} + \frac{1}{i}\\ 
			&\overset{\eqref{eq4.1-28oct}}{\simeq_s}\frac{ \caps(A\cap 2B_i, 4B_i)}{\caps(2B_i, 4B_i)} + \frac{1}{i} \\
			&\to 0 \quad \text{as $i\to\infty$},
		\end{align*}
		since $A$ is $s$-thin at $x$. By Lemma \ref{lem4.4-29oct}, we conclude that $W$ is $s$-thin at $x$.

	\end{proof}
	
	By \cite[Lemma 4.9]{BB23}, if $(X,d,\mu)$ is a complete, connected metric measure space equipped
	with a doubling measure, then for
	$A\subset B(x,r)$ with
	$B(x,3r)\subsetneq X$, we have
	\begin{equation}\label{eq4.7-29oct}
		\frac{C_{s,1}(A)}{1+r^{s}}\lesssim_s \caps(A, B(x, 2r))\lesssim_s \left( 1+\frac{1}{r^{s}}\right)C_{s,1}(A),
	\end{equation}
	where the comparison constant depends only on $s$ and the doubling constant.

	\begin{example}\label{ex3.9-15dec}
		As usual, let $0<s<1$. We show that there exists a complete, connected metric measure space $(X,d,\mu)$ equipped a doubling measure, and a set $A$ that is $s$-thin at
		$O\in X$, such that $O\in \overline{A}\setminus A$ and $C_{s,1}(\{O\})>0$.
		Let $X=\mathbb{R}^n$ with the Euclidean distance $d$ and define the measure
		\[
		d\mu(x): =w(x)dx := |x|^{\delta}\,dx
		\]
		where $  -n < \delta < s-n.$
		Then $(X,d,\mu)$ is a metric measure space equipped a doubling measure {by \cite[Corollary 15.35]{HKM06} applied with $-n<\delta$.}
		For the origin $O$ and  { for all $ r>0$,}
		\begin{equation}\label{eq2.22-13mar}
			\mu(B(O,r)) = \int_{B(O,r)} |x|^{\delta}\,dx \simeq r^{\,n+\delta}.
		\end{equation}
		By \cite[Theorem 1.3]{BB23} { applied with \eqref{eq2.22-13mar}},  it follows from $0<n+\delta<s$ that $C_{s,1}(\{O\})>0$.			
		For any point $x\neq O$, the weight $|x|^{\delta}$ is bounded in a neighborhood of $x$. Let $r_j:= d(O,x)/(j+2)$ and $B_j:= B(x,r_j)$. Then we have that
		\[
		C_{s,1}(\{x\}) \overset{\eqref{eq4.7-29oct}}{\lesssim_s} \text{cap}_{s,1}(B_j, 2B_j) 
		\overset{\eqref{eq4.1-28oct}}{\simeq_s} \frac{\mu(B_j)}{r_j^s} 
		\lesssim |x|^\delta r_j^{\,n-s} \to 0 \quad \text{as } j\to\infty.
		\]
		Hence $C_{s,1}(\{x\}) = 0$ for every $x\in \mathbb{R}^n \setminus \{O\}$.
		Now let $\{x_j\}_{j\in\mathbb{N}}$ be a sequence in $\mathbb{R}^n\setminus \{O\}$ such that $x_j \to O$ as $j\to \infty$, and define
		\[
		A = \bigcup_{j\in\mathbb{N}} \{x_j\}.
		\]
		Then clearly $O \in \overline{A} \setminus A$.  
		Using subadditivity of capacity and the above estimate for $C_{s,1}(\{x_j\})$, we obtain that
		\begin{align*}
			&\quad \lim_{r\to0^+} \frac{\caps(A\cap B(O,r), 2B(O,r))}{\caps(B(O,r), 2B(O,r))}\\ 
			&\overset{\eqref{eq4.7-29oct}}{\lesssim_s} \lim_{r\to0^+} 
			\sum_{j\in\mathbb N:\, x_j\in B(O,r)}\frac{\left( 1+
				\frac{1}{r^s}\right)C_{s,1}(\{x_j\})}{\caps(B(O,r), 2B(O,r))} \equiv 0.
		\end{align*} 
		This shows that $A$ is $s$-thin at $O$. {The claim of Example \ref{ex3.9-15dec} follows.}
	\end{example}

	The measure-theoretic boundary $\partial^*E$ of a set $E\subset X$ is the set of points $x\in X$ at which both $E$ and its complement have strictly positive upper density, i.e.
	\begin{equation}\label{eq2.19-2011}
		\limsup_{r\to 0^+} \frac{\mu(B(x,r)\cap E)}{\mu(B(x,r))}>0 \quad \text{and }\quad 
		\limsup_{r\to 0^+} \frac{\mu(B(x,r)\setminus E)}{\mu(B(x,r))}>0.
	\end{equation}
	The measure-theoretic interior and exterior of $E$ are respectively defined  by
	\begin{equation}\label{eq2.20-2011}
		I_E:=\left\{x\in X:\lim_{r\to 0^+} \frac{\mu(B(x,r)\setminus E)}{\mu(B(x,r))}=0 \right\}
	\end{equation}
	and
	\begin{equation}\notag
		O_E:=\left\{x\in X:\lim_{r\to 0^+} \frac{\mu(B(x,r)\cap E)}{\mu(B(x,r))}=0 \right\}.
	\end{equation}
	Then  $X=\partial^*E \cup I_E\cup O_E$.
	
	\begin{lemma}\label{lem4.10}

		Let 
		$x\in X$ and $0<r<\frac{1}{4}\diam X$.
		Let  $A\subset B(x,r)$ be such that  $x\in I_A$.
		Then there exists $\sigma_r\in (0,r]$ such that
		\[
		\frac{\mu(B(x,\sigma_r))}{\sigma_r^s}\lesssim \, \caps(A,B(x,2r)).
		\]
	\end{lemma}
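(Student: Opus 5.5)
Since $x\in I_A$, the density of $A$ at $x$ tends to $1$, so there is $\sigma_r\in(0,r]$ with $\mu(B(x,\sigma_r)\setminus A)\le \frac12\mu(B(x,\sigma_r))$, that is, $\mu(A\cap B(x,\sigma_r))\ge \frac12\mu(B(x,\sigma_r))$. Concretely, we take $\sigma_r$ to be some small radius (at most $r$) where this density condition holds. The plan is to show that any admissible function for $\caps(A,B(x,2r))$ has energy at least a constant times $\mu(B(x,\sigma_r))/\sigma_r^s$. We cannot use the isoperimetric inequality \eqref{isoperimetric-func.} at scale $2r$ directly, because that would only give $\mu(B(x,\sigma_r))/r^s$, which is weaker. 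Instead we localize the energy to the ball $B:=B(x,\sigma_r)$, or to a slightly larger ball, and use the mass of $A$ inside it.

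Let $u$ be admissible for $\caps(A,B(x,2r))$, so that $u\equiv1$ on $A$ and $\supp u\subset B(x,2r)$. Truncating to $\min\{\max\{u,0\},1\}$ does not increase the energy, so we may assume $0\le u\le1$. We split into two cases depending on the average $u_{2B}$ over $2B=B(x,2\sigma_r)$.

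\emph{Case 1: $u_{2B}\le\frac14$.} Since $u=1$ on $A\cap B$, which has measure at least $\frac12\mu(B)\gtrsim\mu(2B)$, we get $\dashint_{2B}|u-u_{2B}|\,d\mu\gtrsim 1$. The local fractional Poincar\'e inequality \eqref{Poincare-local-1} on $2B$ then yields $\|u\|_{\dot W^{s,1}(X)}\ge\|u\|_{\dot W^{s,1}(2B)}\gtrsim \mu(2B)/\sigma_r^s\ge \mu(B)/\sigma_r^s$.

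\emph{Case 2: $u_{2B}>\frac14$.} Now $u$ carries substantial mass near $x$ but must vanish outside $B(x,2r)$. If $\sigma_r$ is comparable to $r$, the isoperimetric inequality \eqref{isoperimetric-func.} at scale $2r$ already gives $\|u\|_{\dot W^{s,1}}\gtrsim \int|u|/r^s\gtrsim\mu(B)/\sigma_r^s$. In general we use a chain of dyadic balls $B_k=B(x,2^k\sigma_r)$ running up to scale $4r$, where $u$ has small average since $u=0$ on the large set $B(x,4r)\setminus B(x,2r)$; reverse doubling from connectedness ensures this set is large. Some consecutive pair in the chain must then have averages differing by at least $c/k$, and \eqref{Poincare-local-1} on that ball gives energy $\gtrsim \mu(B_k)/(k(2^k\sigma_r)^s)$. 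This is problematic, because the $1/k$ loss does not obviously cancel.

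The cleaner route, and the one we would actually follow, avoids the chain by choosing $\sigma_r$ cleverly. Define $\sigma_r$ as the largest dyadic radius $\rho=2^{-j}r$ at which $\mu(A\cap B(x,\rho))\ge\frac12\mu(B(x,\rho))$; such a radius exists because $x\in I_A$. If $j=0$, the isoperimetric inequality \eqref{isoperimetric-set} applied at scale $2r$ to the level sets of $u$ (through the coarea formula) gives $\|u\|_{\dot W^{s,1}}\gtrsim\mu(A)/r^s\gtrsim\mu(B(x,r))/r^s$, and we are done. If $j\ge1$, then at radius $2\sigma_r$ the density of $A$ is below $\frac12$, while at $\sigma_r$ it is at least $\frac12$ and $u=1$ there. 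Hence on $2B$ the function $u$ fails to be close to $1$ on a large proportion of the ball: we have $\mu(2B\setminus A)\ge\frac12\mu(2B)$, and although $u$ need not be small there, the set $A\cap B$, of measure $\gtrsim\mu(2B)$, is where $u=1$. We would combine this with Case 1 applied to the function $1-u$ (reversing roles) in order to bound $\dashint_{2B}|u-u_{2B}|$ from below. The main obstacle is that $u$ may be close to $1$ on $2B\setminus A$ as well, so the oscillation on $2B$ need not be large. To handle this we would apply the dichotomy to the level set $\{u\ge\frac12\}$ (via coarea) instead of to $A$, choosing $\sigma_r$ as the maximal dyadic scale at which $\{u\ge\frac12\}$ has density at least $\frac12$. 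If the dependence of $\sigma_r$ on $u$ must be removed, it would be handled by instead taking an optimal obstacle solution $E\supset A$ from Lemma \ref{lemma4.6} and choosing $\sigma_r$ by the density of $E$.
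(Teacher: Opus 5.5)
Your final route is correct and is essentially the paper's proof. You take the optimal set $E\supset A$ from Lemma \ref{lemma4.6}, so that $\|{\bf 1}_E\|_{\dot W^{s,1}(X)}\le\caps(A,B(x,2r))$, choose the scale so that the density of $E$ at $x$ lies between two positive constants less than $1$, and apply \eqref{Poincare-local-1} to ${\bf 1}_E$. The paper instead takes the reverse-doubling constant $\beta$ as the density threshold: since $E\subset B(x,2r)$, the density of $E$ in $B(x,4r)$ is at most $\beta$, which removes your separate top-scale case $j=0$ and avoids the restriction $2r<\frac14\diam X$ needed for \eqref{isoperimetric-set}.

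You should present this argument directly and drop the earlier attempts: an arbitrary small $\sigma_r$, the dyadic chain, and scales chosen by the density of $A$ or of the fixed level set $\{u\ge\frac12\}$ do not work. Intermediate density forces oscillation $\gtrsim 1$ only for the indicator of a set whose energy is controlled by the capacity, and a fixed level set of $u$ need not have energy bounded by $\|u\|_{\dot W^{s,1}(X)}$.
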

	\begin{proof}
		By Lemma \ref{lemma4.6} we find a set $E\subset B(x,2r)$ such that $A\subset E$ and 
		\begin{align}\label{lem4.10-1}
			\|{\bf 1}_E\|_{\dot W^{s,1}(X)}\leq \caps(A,B(x,2r)).
		\end{align}
		{By \cite[Lemma 3.7]{BB11},}
		there exists $0<\beta<1$ depending only on the doubling constant such that $ \mu(B(x,2r))\leq \beta \mu(B(x,4r))$. In particular, since $E\subset B(x,2r)$,
		\begin{equation}\label{eq2.19-22nov}
			\mu(E\cap B(x,4r))\le \beta \mu(B(x,4r)).
		\end{equation}
		Since $x\in I_A\subset I_E$, there is $0<r_0<r$ such that for all $0<t<r_0$,
		\begin{equation}\label{eq2.20-22nov}
			\mu(E\cap B(x,t))>  \beta \mu(B(x,t)).
		\end{equation}
		Setting
		\[
		\sigma':=\inf\{r'>0:\mu(E\cap B(x,r'))\le \beta\mu(B(x,r'))\},
		\]
		we have by \eqref{eq2.19-22nov}-\eqref{eq2.20-22nov} that $r_0\leq\sigma'\le 4r$.
		By the definition of $\sigma'$, there is $\sigma$
		with $\sigma'\leq \sigma< 3 \sigma'/2$ 
		such that $\mu(E\cap B(x,\sigma))\le \beta\mu(B(x,\sigma))$.
		Moreover, since $\sigma/2<3\sigma'/4$, we have from the definition of $\sigma'$ that
		$\mu(E\cap B(x,\sigma/2))>  \beta \mu(B(x,\sigma/2))$.
		Using the doubling property,
		we obtain that
		\begin{align}
			\beta\frac{1}{C_{\mu}}\leq \frac{\mu(E\cap B(x,\sigma/2))}{C_{\mu}\mu(B(x,\sigma/2))} \leq \frac{\mu(E\cap B(x,\sigma))}{\mu(B(x,\sigma))}
			\leq \beta.   \label{lem4.10-2}
		\end{align}
		By the fractional Poincar\'e inequality \eqref{Poincare-local-1}, we have that
		\begin{align}
			&\, \sigma^s\|{\bf 1}_E\|_{\dot W^{s,1}(X)}\notag \\
			&\geq \sigma^s\int_{B(x, \sigma)}\int_{B(x, \sigma)}\frac{|{\bf 1}_E(x)-{\bf 1}_E(y)|}{d(x,y)\mu(B_{x,y})}\,d\mu(x)\,d\mu(y)\notag\\
			&\overset{\eqref{Poincare-local-1}}{\gtrsim} \int_{B(x,\sigma)}|{\bf 1}_E-({\bf 1}_{E})_{B(x,\sigma)}|\,d\mu\notag\\
			&=\frac{2\mu(E\cap B(x,\sigma))\mu(B(x,\sigma)\setminus E)}{\mu(B(x,\sigma))}\notag\\
			&\overset{\eqref{lem4.10-2}}{\simeq_\beta} \mu(B(x,\sigma)).\label{lem4.8-3}
		\end{align}
		Thus 
		\begin{align*}
			\caps(A,B(x,2r))\overset{\eqref{lem4.10-1}}{\geq} \|{\bf 1}_E\|_{\dot W^{s,1}(X)}\gtrsim_{\beta} \frac{\mu(B(x,\sigma))}{\sigma^s}\simeq \frac{\mu(B(x,\sigma/6))}{(\sigma/6)^s}.
		\end{align*}
		Let $\sigma_r:=\sigma/6<r$. Since $\beta$ depends only on the doubling constant, this gives the desired claim.
	\end{proof}
	
	We record the following simple lemma.   
	\begin{lemma}[{\cite[Lemma 4.7]{La20}}] \label{lem4.11}
		Let $f:(0,\infty)\to (0,\infty)$ be such that $\lim_{r\to 0^+}f(r)=0$.
		Pick $\sigma_r\in (0,r]$ for every $r>0$.
		Then 
		\[
		\limsup_{r\to 0^+}\frac{f(r)}{f(\sigma_r)}\ge \frac{1}{2}.
		\]
	\end{lemma}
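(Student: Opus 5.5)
We argue by contradiction. Suppose that $\limsup_{r\to0^+} f(r)/f(\sigma_r)<\frac12$. Then there exist $r_0>0$ and $q<\frac12$ (indeed any $q\in(\limsup,\tfrac12)$ works) such that
\[
f(r)\le q\, f(\sigma_r)\quad\text{for all } 0<r<r_0 .
\]
The idea is to turn this into an iteration that produces an increasing sequence of values of $f$ along radii tending to zero, which contradicts $f(r)\to0$.

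First, we need $\sigma_r<r$ strictly in the regime considered. If $\sigma_r=r$, the inequality would read $f(r)\le q f(r)$ with $q<1$. Since $f>0$, this is impossible. Hence $\sigma_r<r$ for all $0<r<r_0$.

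Fix $\rho_0\in(0,r_0)$ and define inductively $\rho_{k+1}:=\sigma_{\rho_k}<\rho_k$. Iterating the inequality gives
\[
f(\rho_k)\ge q^{-k} f(\rho_0)\to\infty .
\]
This already contradicts the boundedness of $f$ near $0$ implied by $f(r)\to0$, provided $\rho_k\to0$. If instead $\rho_k$ decreases to a limit $\rho_\infty>0$, we need more care. Since $f(r)\to0$, $f$ is bounded by some $M$ on $(0,r_1)$. So it suffices to start with $\rho_0<r_1$: every $\rho_k$ lies in $(0,r_1)$, and hence $f(\rho_k)\le M$ for all $k$. This contradicts $f(\rho_k)\to\infty$ regardless of whether $\rho_k\to0$.

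So the argument reduces to: (i) restricting to small $r$, where $f\le M$ and the ratio bound holds; (ii) iterating along $\rho_{k+1}=\sigma_{\rho_k}\le\rho_k$, which keeps all radii in the good range; (iii) obtaining geometric growth $f(\rho_k)\ge q^{-k}f(\rho_0)$, which contradicts boundedness. Note that this actually shows $\limsup\ge1$, so the stated $\frac12$ is harmless. The only delicate point, and the main thing to check, is that the iterates stay inside the interval where both the ratio bound and the uniform bound $M$ hold. This follows from $\sigma_r\le r$, and no positivity beyond $f>0$ is needed.
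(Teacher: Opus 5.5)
Your proof is correct and complete. From $f(r)\le q\,f(\sigma_r)$ on $(0,r_0)$ with $0<q<1$, the orbit $\rho_{k+1}=\sigma_{\rho_k}$ stays in $(0,\rho_0]$. There $f$ is bounded because $f(r)\to 0$, while $f(\rho_k)\ge q^{-k}f(\rho_0)\to\infty$. As you note, this even gives $\limsup_{r\to0^+}f(r)/f(\sigma_r)\ge 1$, and your preliminary observation that $\sigma_r<r$ is not actually needed.

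The paper itself gives no proof of this lemma; it is quoted from \cite[Lemma 4.7]{La20}. So there is no argument in the present source to compare yours against.

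A one-step variant of the same idea avoids the iteration and explains where the constant $\frac12$ comes from. Given $\delta>0$, set $S_\delta:=\sup_{(0,\delta]}f\in(0,\infty)$ and pick $r\in(0,\delta]$ with $f(r)>\frac12 S_\delta$. Since $\sigma_r\le r\le\delta$, we get $f(\sigma_r)\le S_\delta<2f(r)$, so $f(r)/f(\sigma_r)>\frac12$ for some $r\le\delta$, for every $\delta$. Replacing $\frac12$ by any $\theta<1$ again yields the bound $\ge 1$. Your iteration makes the growth mechanism explicit; the near-maximizer version is shorter. Both use only that $f$ is positive and bounded near $0$, which is weaker than $f(r)\to0$.
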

	
	\begin{lemma}\label{lem2.12-15dec}

		Let $x\in X$ with $C_{s,1}(\{ x\})>0$. Then $\{x\}$ is $s$-thick at $x$.
	\end{lemma}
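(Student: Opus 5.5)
We have to show that if $C_{s,1}(\{x\})>0$, then
\[
\limsup_{r\to0^+}\frac{\caps(\{x\}\cap B(x,r),B(x,2r))}{\caps(B(x,r),B(x,2r))}>0 .
\]
The plan is to compare the numerator with $C_{s,1}(\{x\})$ via \eqref{eq4.7-29oct}, and the denominator with $\mu(B(x,r))/r^s$ via \eqref{eq4.1-28oct}. It then suffices to see that the ratio of the lower bound for the numerator to the upper bound for the denominator does not tend to zero.

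\textbf{Numerator.} For $0<r$ small (so that $B(x,3r)\subsetneq X$, which holds once $r<\frac14\diam X$), apply \eqref{eq4.7-29oct} with $A=\{x\}\subset B(x,r)$. This gives
\[
\caps(\{x\},B(x,2r))\gtrsim_s \frac{C_{s,1}(\{x\})}{1+r^s}\ge \frac{C_{s,1}(\{x\})}{2}
\quad\text{for } r\le 1 .
\]

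\textbf{Denominator.} By \eqref{eq4.1-28oct} with $\theta=1$ (here $R=2r$),
\[
\caps(B(x,r),B(x,2r))\le C_2\,\frac{\mu(B(x,r))}{r^s}.
\]

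\textbf{Combining.} Putting the two estimates together, the ratio is bounded below by
\[
c\,C_{s,1}(\{x\})\,\frac{r^s}{\mu(B(x,r))}.
\]
It remains to show that $r^s/\mu(B(x,r))$ does not tend to $0$; it suffices to have $\mu(B(x,r))\lesssim r^s$ along some sequence $r\to0$. I would get this from the positivity of capacity, as follows.

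\textbf{Main step.} For every $r\le1$ small,
\[
C_{s,1}(\{x\})\lesssim_s (1+r^s)\,\caps(\{x\},B(x,2r))\le 2\,\caps(B(x,r),B(x,2r))\lesssim \frac{\mu(B(x,r))}{r^s}.
\]
The first inequality is the other side of \eqref{eq4.7-29oct}, and the middle one is monotonicity of the capacity. This only yields a lower bound $\mu(B(x,r))\gtrsim r^s$, which points in the wrong direction, so I must instead use capacity upper bounds. Use \eqref{eq4.7-29oct} at the level of $C_{s,1}$: since $\{x\}\subset B(x,r)$,
\[
C_{s,1}(\{x\})\le C_{s,1}(B(x,r))\lesssim_s (1+r^s)\,\caps(B(x,r),B(x,2r))\lesssim \frac{\mu(B(x,r))}{r^s}.
\]
This is again only a lower bound.

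The genuine upper bound must come from test functions. Suppose $\liminf_{r\to0}\mu(B(x,r))/r^s=\infty$, i.e. $\mu(B(x,r))/r^s\to\infty$. I would then show $C_{s,1}(\{x\})=0$ by a dyadic annuli (logarithmic-type) test function. Take $u=\sum_{j=k}^{N}\frac1{N-k}\eta_j$, with $\eta_j$ cutoffs between $B(x,2^{-j-1})$ and $B(x,2^{-j})$, so that $u\equiv1$ near $x$. The energy of each $\eta_j$ is computed as in \eqref{eq2.6-29oct}.

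I expect this step to fail. For $p=1$, averaging cutoffs does not reduce the energy: the energy of $u$ is roughly the average of the energies of the $\eta_j$, i.e. of order $\inf_j\mu(B_j)/r_j^s$, rather than becoming small. Hence this argument gives no contradiction, and the difficulty is real.

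\textbf{What I would try instead.} Rather than proving $\mu(B(x,r))\lesssim r^s$, I would use Lemma \ref{lem4.11} directly. Set
\[
f(r):=\caps(B(x,r),B(x,2r))\simeq \frac{\mu(B(x,r))}{r^s},
\]
and suppose $f(r)\to0$; otherwise $f(r)\to\infty$ along a subsequence, and this case needs a separate argument. The plan is to combine Lemma \ref{lem4.11} with Lemma \ref{lem4.10}, choosing a suitable $\sigma_r$, to conclude that the ratio stays positive along a subsequence. The remaining case, in which $\mu(B(x,r))/r^s\to\infty$ while $C_{s,1}(\{x\})>0$, is the main obstacle, and I do not have an argument for it.
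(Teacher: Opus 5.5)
\textbf{There is a genuine gap.} Your combining step is correct and coincides with the first case of the paper's proof. Whenever $\limsup_{r\to0^+}1/\caps(B(x,r),B(x,2r))>0$ (equivalently $\liminf_{r\to0^+}\mu(B(x,r))/r^s<\infty$), the lower bound from \eqref{eq4.7-29oct} gives a positive $\limsup$ of the ratio. The complementary case $\caps(B(x,r),B(x,2r))\to\infty$, i.e.\ $\mu(B(x,r))/r^s\to\infty$, is left open, as you acknowledge, so the statement is not proved.

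Your attempt to exclude this case by showing $C_{s,1}(\{x\})=0$ cannot work, because in this case $C_{s,1}(\{x\})>0$ always holds. By the argument below, $\caps(\{x\},B(x,2r))\gtrsim\inf_{0<\rho\le r}\mu(B(x,\rho))/\rho^s>0$, and then \eqref{eq4.7-29oct} gives positivity. So here the fixed number $C_{s,1}(\{x\})$ is the wrong quantity to compare with a denominator that blows up. You need a lower bound for $\caps(\{x\},B(x,2r))$ that itself grows like an annulus capacity. Your planned use of Lemma \ref{lem4.11} is also set up backwards: with $f(r)=\caps(B(x,r),B(x,2r))\to0$ you are in the case you have already settled.

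The missing idea, which is how the paper handles this case, is as follows. For small $r$, the outer-capacity property \eqref{eq2.8-22nov} lets you choose $t\in(0,r)$ with $\caps(\{x\},B(x,2r))\ge\caps(B(x,t),B(x,2r))-1$. Since trivially $x\in I_{B(x,t)}$, Lemma \ref{lem4.10} applied to $A=B(x,t)$ yields $\sigma_r\in(0,r]$ with $\caps(B(x,t),B(x,2r))\gtrsim\mu(B(x,\sigma_r))/\sigma_r^s$. By \eqref{eq4.1-28oct}, this is comparable to $\caps(B(x,\sigma_r),B(x,2\sigma_r))$. Hence, with $c>0$ depending only on $s$ and the doubling constant,
\[
\frac{\caps(\{x\},B(x,2r))}{\caps(B(x,r),B(x,2r))}\ge c\,\frac{\caps(B(x,\sigma_r),B(x,2\sigma_r))}{\caps(B(x,r),B(x,2r))}-\frac{1}{\caps(B(x,r),B(x,2r))}.
\]
Now apply Lemma \ref{lem4.11} to $f(r):=1/\caps(B(x,r),B(x,2r))$, which tends to $0$ exactly in this case. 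It shows that the $\limsup$ of the quotient in the first term on the right is at least $\frac12$, while the second term tends to $0$. Hence the ratio has positive $\limsup$, and $\{x\}$ is $s$-thick at $x$.
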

	\begin{proof}
		If $\limsup_{r\to0^+}\left(\caps(B(x,r), B(x,2r))\right)^{-1}=:c>0$, then
		\begin{align*}
			\limsup_{r\to0^+}\frac{\caps(\{x\}, B(x,2r))}{\caps(B(x,r),B(x,2r))}&\overset{\eqref{eq4.7-29oct}}{\gtrsim} \limsup_{r\to0^+}\frac{{C}_{s,1}(\{x\} )}{\caps(B(x,r),B(x,2r))}\\
			&=\, c \cdot {C_{s,1}(\{x\})}>0,
		\end{align*}
		which implies that $\{x\}$ is $s$-thick at $x$. Thus we may assume that 
		\begin{equation}\label{eq4.8-29oct}
			\limsup_{r\to0^+}\frac{1}{\caps(B(x,r),B(x,2r))}=0.
		\end{equation}
		Let $r>0$ {be} sufficiently small that
		\[
		\frac{1}{\caps(B(x,r),B(x,2r))}\leq 1.
		\]
		Recall from \eqref{eq2.8-22nov} that the condenser $(s,1)$-capacity is an outer capacity, and so there exists a sufficiently small $0<t<r$ such that 
		\begin{equation}\label{eq2.24-22nov}
			\caps(\{x\}, B(x,2r)) \geq \caps (B(x,t), B(x,2r)) -1.
		\end{equation}
		By Lemma \ref{lem4.10}, we find $\sigma_r\in(0,r]$ such that
		\begin{align}\notag
			\caps(B(x,t),B(x,2r))
			&\gtrsim   \frac{\mu(B(x,\sigma_r))}{\sigma_r^s }\\
			&\overset{\eqref{eq4.1-28oct}}{\simeq_s}   \caps(B(x,\sigma_r),B(x,2\sigma_r)).\label{eq2.25-22nov}
		\end{align}
		It follows that
		\begin{align*}
			&\frac{\caps(\{x\}, B(x,2r))} {\caps(B(x,r),B(x,2r))}\\
			\overset{\eqref{eq2.24-22nov}}{\geq} &\, \frac{{\rm cap}_{s,1}(B(x,t), B(x,2r))}{{\caps(B(x,r),B(x,2r))}}- \frac{1}{\caps(B(x,r),B(x,2r))}\\
			\overset{\eqref{eq2.25-22nov}}{\gtrsim}_s&\,  \frac{\caps (B(x,\sigma_r), B(x,2\sigma_r)) }{\caps(B(x,r),B(x,2r))}-  \frac{1}{\caps(B(x,r),B(x,2r))}.
		\end{align*}
		Notice from Lemma \ref{lem4.11} applied to $f(r):=\left(\caps(B(x,r),B(x,2r)) \right)^{-1}$ that
		\[
		\limsup_{r\to0^+} \frac{\caps (B(x,\sigma_r), B(x,2\sigma_r)) }{\caps(B(x,r),B(x,2r))}\geq \frac{1}{2}.
		\]
		Combining this with \eqref{eq4.8-29oct}, we conclude from the  above estimates that 
		\[
		\limsup_{r\to 0^+}\frac{\caps(\{x\},B(x,2r))}{\caps(B(x,r),B(x,2r))}>0,
		\]
		which implies that $\{x\}$ is $s$-thick at $x$.
	\end{proof}
	
	For any $A\subset X$ and $0<R<\infty$, the
	\emph{$R$-restricted Hausdorff content of codimension $s$} of $A$ is defined by
	\[
	\mathcal{H}^{-s}_R(A):=\inf{\left\{\sum_{i} \frac{\mu(B(x_i,r_i))}{r_i^s}: A\subset \bigcup_{i} B(x_i,r_i),\, r_i\leq R \right\}},
	\]
	where the infimum is taken over finite and countable collections of balls.
	The \emph{Hausdorff measure  of codimension $s$} of $A$ is then defined by
	\[
	\mathcal{H}^{-s}(A):=\lim_{R\to 0^+} \mathcal{H}^{-s}_R(A).
	\]
	
	\begin{lemma}\label{lem5.7-19nov}
		
		For any $E\subset X$ with ${\bf 1}_E\in \dot W^{s,1}(X)$, we have
		$\mathcal H^{-s}(\partial^*E)=0$, and for every ball $B$,
		\[
		{\rm cap}_{s,1}(\partial^*E\cap B, 2B)=0.
		\]
	\end{lemma}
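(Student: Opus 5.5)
We plan to prove that $\partial^*E$ is $\mathcal H^{-s}$-negligible by a Vitali-type covering argument, and then pass from Hausdorff measure of codimension $s$ to condenser capacity by a cutoff construction.

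\emph{Step 1: a density estimate at points of $\partial^*E$.} Fix $y\in\partial^*E$ and $\delta>0$. By \eqref{eq2.19-2011}, together with the doubling property and a continuity argument in the radius like the one in the proof of Lemma~\ref{lem4.10}, we find arbitrarily small radii $\rho\in(0,\delta)$ for which both
\[
\frac{\mu(B(y,\rho)\cap E)}{\mu(B(y,\rho))}\ge c_y
\quad\text{and}\quad
\frac{\mu(B(y,\rho)\setminus E)}{\mu(B(y,\rho))}\ge c_y .
\]
The constant $c_y>0$ may depend on $y$. Exactly as in \eqref{lem4.8-3}, the fractional Poincar\'e inequality \eqref{Poincare-local-1} then gives
\[
\frac{\mu(B(y,\rho))}{\rho^s}\lesssim_{c_y}\|{\bf 1}_E\|_{\dot W^{s,1}(B(y,\rho))}.
\]

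\emph{Step 2: removing the dependence on $y$.} For $j\in\mathbb N$ let $D_j$ be the set of $y\in\partial^*E$ with $\limsup$ densities of $E$ and of $X\setminus E$ both at least $1/j$. Then $\partial^*E=\bigcup_j D_j$, so it suffices to show $\mathcal H^{-s}(D_j)=0$ for each $j$. On $D_j$ the constant in Step~1 is uniform.

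\emph{Step 3: the covering argument.} We run the $5r$-covering lemma on the balls $B(y,\rho_y)$, $y\in D_j$, with $\rho_y<\delta$. This yields disjoint balls $B_i$ with $D_j\subset\bigcup 5B_i$. Using doubling,
\[
\mathcal H^{-s}_{5\delta}(D_j)\lesssim_j\sum_i\|{\bf 1}_E\|_{\dot W^{s,1}(B_i)}\le \int_{U_\delta}\int_{X}\frac{|{\bf 1}_E(x)-{\bf 1}_E(z)|}{\mu(B_{x,z})d(x,z)^s}\,d\mu(z)\,d\mu(x),
\]
where $U_\delta:=\bigcup_i B_i$ lies in the $\delta$-neighbourhood of $D_j$. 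To show this tends to $0$ as $\delta\to0$, we use absolute continuity of the integral. We have $\mu(U_\delta)\to\mu(\overline{D_j})$-type limits, so we also need $\mu(\partial^*E)=0$. This follows from the Lebesgue differentiation theorem, since a.e.\ point is a density point of $E$ or of its complement.

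\emph{Main obstacle.} The measure of the closure $\overline{D_j}$ need not vanish, so the integral over $U_\delta$ need not be small. The standard fix is to first show that $\mathcal H^{-s}(D_j)<\infty$, which the same estimate gives: it is $\lesssim_j\|{\bf 1}_E\|_{\dot W^{s,1}(X)}$. Then $\mu(D_j)=0$ follows from $\mu(B)\le \rho^s\mu(B)/\rho^s$ with $\rho\to0$. Next we localize: take an open $V\supset D_j$ with $\mu(V)$ small, and restrict the covering to balls inside $V$. The double integral over $V\times X$ then tends to $0$ as $\mu(V)\to0$, because the integrand lies in $L^1(X\times X)$. This gives $\mathcal H^{-s}(D_j)=0$.

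\emph{Step 4: capacity.} For $\caps(\partial^*E\cap B,2B)=0$, cover $\partial^*E\cap B$ by balls $B(x_i,r_i)\subset 2B$ of small radii with $\sum_i\mu(B(x_i,r_i))/r_i^s<\varepsilon$. Take Lipschitz cutoffs $\eta_i$ equal to $1$ on $B(x_i,r_i)$ and supported in $B(x_i,2r_i)$, and estimate $\|\eta_i\|_{\dot W^{s,1}(X)}\lesssim_s \mu(B(x_i,r_i))/r_i^s$ as in \eqref{eq2.6-29oct} together with \eqref{eq4.1-28oct}. Then $u:=\sup_i\eta_i$ is admissible for the condenser capacity, the pointwise bound $|u(x)-u(z)|\le\sup_i|\eta_i(x)-\eta_i(z)|\le\sum_i|\eta_i(x)-\eta_i(z)|$ gives $\|u\|_{\dot W^{s,1}(X)}\lesssim_s\varepsilon$, and its support stays in $2B$ once the radii are small enough.
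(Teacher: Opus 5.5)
Your proposal is correct, but for the first claim it takes a different route from the paper.

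\textbf{First claim.} The paper does not prove $\mathcal H^{-s}(\partial^*E)=0$ by hand. It covers $X$ by balls $B(x_i,5)$ and uses \cite[Lemma 7.1]{BB23} to enlarge each to a closed set $X_i\subset\overline{B(x_i,10)}$ on which $\mu{\lfloor}_{X_i}$ is doubling and comparable to $\mu$ on balls. Each $X_i$ is compact because $X$ is proper, so the paper applies Kline's theorem \cite[Theorem 1.2]{Kl25} on each $(X_i,d,\mu{\lfloor}_{X_i})$ and concludes by subadditivity of $\mathcal H^{-s}$.

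You instead reprove the needed part of Kline's result directly in $X$:
\begin{enumerate}
\item a uniform two-sided density bound on the pieces $D_j$;
\item the local fractional Poincar\'e inequality \eqref{Poincare-local-1}, used as in \eqref{lem4.8-3};
\item a $5r$-covering by balls inside an open set $V\supset D_j$ of small measure;
\item absolute continuity of the integral.
\end{enumerate}
This gives a self-contained argument that uses only doubling and \eqref{Poincare-local-1}. It avoids the compact-subspace detour and the comparability check for $\mu{\lfloor}_{X_i}$. The paper's version is shorter because it outsources the covering argument.

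\textbf{Two tidy-ups.}
\begin{enumerate}
\item You do not need the intermediate bound $\mathcal H^{-s}(D_j)<\infty$. Lebesgue differentiation already gives $\mu(\partial^*E)=0$, and outer regularity then supplies the open sets $V$.
\item Apply absolute continuity to $g(x):=\int_X |{\bf 1}_E(x)-{\bf 1}_E(z)|\,\mu(B_{x,z})^{-1}d(x,z)^{-s}\,d\mu(z)$, which lies in $L^1(X)$, so that $\int_V g\,d\mu\to 0$ as $\mu(V)\to0$. Do not phrase it via smallness of $V\times X$: its product measure need not be small when $\mu(X)=\infty$.
\end{enumerate}

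\textbf{Capacity claim.} The paper uses countable subadditivity of $\caps$ from \cite[Proposition 3.4]{BB23}, together with monotonicity in the second argument and \eqref{eq4.1-28oct}. Your explicit test function $u=\sup_i\eta_i$, with $|u(x)-u(z)|\le\sum_i|\eta_i(x)-\eta_i(z)|$ and the cutoff estimate from \eqref{eq2.6-29oct}, is exactly the proof of that subadditivity combined with the upper bound in \eqref{eq4.1-28oct}. So the two arguments agree in substance there.
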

	Here $\partial^*E$ is defined as in \eqref{eq2.19-2011}.
	\begin{proof}
		By the 5-covering lemma, there exists a sequence of pairwise disjoint unit balls $B(x_i,1)$ so that  $\bigcup_{i\in\mathbb N} B(x_i,5)=X$.
		By \cite[Lemma 7.1]{BB23}, we have that for each $i\in\mathbb N$, there is a closed
		set $X_i$ such that $B(x_i,5)\subset X_i\subset \overline{B(x_i,10)}$,  $\mu{\lfloor}_{X_i}$ is doubling on $(X_i, d,\mu{\lfloor}_{X_i})$,
		and 
		{$\mu{\lfloor}_{X_i}(B(x,r))\simeq \mu(B(x,r))$}
		for all $x\in X_i$ 
		and
		$0<r<2\diam X_i$, where the comparison constant depends only on the doubling constant.
		Recall that $\overline{B(x_i,10)}$ is compact since any complete doubling metric measure space is proper by \cite[Lemma 4.1.14]{HKST15}.
		Since a closed subset of a compact set is compact,
		each $(X_i,d)$ is compact.
		
		Let $E$ satisfy ${\bf 1}_E\in \dot W^{s,1}(X)$.
		By \cite[Theorem 1.2]{Kl25} applied in the compact doubling metric measure spaces $({X_i},d,\mu{\lfloor}_{{X_i}})$, we obtain from ${\bf 1}_E\in  W^{s,1}(X_i, d, \mu{\lfloor}_{X_i})$ and $\mu{\lfloor}_{X_i}(B(x,r))\simeq\mu(B(x,r))$ for $x\in X_i$ and $0<r<2\diam X_i$ that $\mathcal{H}^{-s}(\partial^* E\cap X_i)=0$ for all $i$.
		Because $\bigcup_{i\in\mathbb N}{X_i}=X$ and since the Hausdorff measure of codimension $s$ is subadditive, we conclude that $\mathcal H^{-s}(\partial^*E)=0$.
		
		Now let $x\in X$, $r>0$ and $\varepsilon\in (0,r/3)$. For any cover $\{B(x_j, r_j)\}_{j}$
		(finite or countable)
		of $\partial^*E\cap B(x,r)$ with $r_j\leq \varepsilon$, the subadditivity property of
		$\caps$ (see e.g. \cite[Proposition 3.4]{BB23}) implies that
		\begin{align*}
			&\quad \caps(\partial^*E\cap B(x,r), B(x,2r))\\
			&\leq   \sum_{j:\, B(x_j,r_j)\cap B(x,r)\neq\emptyset} \caps(B(x_j,r_j), B(x,2r)), \\
			&\leq  \sum_{j:\, B(x_j,r_j)\cap B(x,r)\neq\emptyset} \caps(B(x_j,r_j), B(x_j,2r_j))\\
			&\overset{\eqref{eq4.1-28oct}}{\simeq_s} \sum_{j:\, B(x_j,r_j)\cap B(x,r)\neq\emptyset} \frac{\mu(B(x_j,r_j))}{r_j^s},
		\end{align*}
		where the third line holds since $B(x_j,2r_j)\subset B(x, r+ 3r_j) \subset B(x, r+3\varepsilon) \subset B(x,2r)$ for all $j\in\mathbb N$ with $B(x_j,r_j)\cap B(x,r)\neq\emptyset$.
		Therefore,
		\[
		\caps(\partial^*E\cap B(x,r), B(x,2r))\lesssim_s \mathcal H_\varepsilon^{-s}(\partial^* E\cap B(x,r))\leq \mathcal H_\varepsilon^{-s}(\partial^* E).
		\]
		Since $\varepsilon\in (0,r/3)$ is arbitrary and $\mathcal H^{-s}(\partial^*E)=0$, the claim follows.
	\end{proof}
	
	For $a\in \mathbb{R}$, we denote the $\lceil a \rceil$ the smallest integer at least $a$.
	
	\begin{lemma}\label{lemma4.11-Nov.12}

		Let $x\in X$, $r>0$, and let $E\subset X$ such that 
		\begin{align}\label{lem4.11-1}
			\frac{\mu(E\cap B(x,2r))}{\mu(B(x,2r))}\leq \frac{1}{2C_{\mu}^{\lceil \log_2 50 \rceil}}.
		\end{align}
		Then 
		\begin{equation}\label{eq2.27-22nov}
			\caps((I_E\cup \partial^*E)\cap B(x,r),B(x,2r))\lesssim_s\|{\bf 1}_E\|_{\dot W^{s,1}(X)}.
		\end{equation}
	\end{lemma}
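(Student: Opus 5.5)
We split $(I_E\cup\partial^*E)\cap B(x,r)$ into the part lying in $\partial^*E$ and the part lying in $I_E$. By Lemma \ref{lem5.7-19nov}, applied to the ball $B(x,r)$, the first part has zero capacity relative to $B(x,2r)$. We may assume ${\bf 1}_E\in\dot W^{s,1}(X)$, since otherwise there is nothing to prove. By countable subadditivity of $\caps(\cdot,B(x,2r))$ (see \cite[Proposition 3.4]{BB23}), it then suffices to estimate $\caps(I_E\cap B(x,r),B(x,2r))$.

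For each $y\in I_E\cap B(x,r)$ we run a stopping-radius argument, as in the proof of Lemma \ref{lem4.10}. Let $\beta:=1/2$, or any fixed small constant. The density of $E$ in $B(y,t)$ tends to $1$ as $t\to0$. For $t$ of order $r/25$, the ball $B(y,t)$ is contained in $B(x,2r)$ and comparable to it: by doubling, $\mu(B(x,2r))\le C_\mu^{\lceil\log_2 50\rceil}\mu(B(y,t))$ when $t\ge 2r\cdot 2/50$ roughly. So \eqref{lem4.11-1} gives density at most $1/2$ there. This is exactly why the exponent $\lceil\log_2 50\rceil$ appears, and I would choose the initial radius $t_0=r/25$ to match.

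Define $\sigma_y$ as the infimum of radii $t\le t_0$ at which the density of $E$ in $B(y,t)$ is at most $\beta$. As in \eqref{lem4.10-2}, there is then $\sigma_y\le t_0$ such that
\[
\beta/C_\mu\le \frac{\mu(E\cap B(y,\sigma_y))}{\mu(B(y,\sigma_y))}\le\beta .
\]
The fractional Poincar\'e inequality \eqref{Poincare-local-1}, used as in \eqref{lem4.8-3}, then yields
\[
\frac{\mu(B(y,\sigma_y))}{\sigma_y^s}\lesssim \|{\bf 1}_E\|_{\dot W^{s,1}(B(y,\sigma_y))}.
\]

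Next apply the 5-covering lemma to $\{B(y,\sigma_y)\}_y$; the radii are bounded by $r/25$. This gives disjoint balls $B_i=B(y_i,\sigma_i)$ with $I_E\cap B(x,r)\subset\bigcup 5B_i$. Since $\sigma_i\le r/25$, we have $10B_i\subset B(x,2r)$. Using subadditivity and monotonicity of $\caps$ in the second argument, we get
\[
\caps(I_E\cap B(x,r),B(x,2r))\le\sum_i\caps(5B_i,10B_i)\overset{\eqref{eq4.1-28oct}}{\lesssim_s}\sum_i\frac{\mu(5B_i)}{(5\sigma_i)^s}\lesssim\sum_i\frac{\mu(B_i)}{\sigma_i^s}.
\]
By the Poincar\'e estimate above, the last sum is at most $\sum_i\|{\bf 1}_E\|_{\dot W^{s,1}(B_i)}$. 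The sets $B_i\times B_i$ are disjoint, so this sum is at most $\|{\bf 1}_E\|_{\dot W^{s,1}(X)}$, which proves \eqref{eq2.27-22nov}.

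The main obstacle is the bookkeeping in the stopping-time step. First, the starting radius must be large enough that \eqref{lem4.11-1} forces low density, but small enough that the enlarged balls $10B_i$ stay inside $B(x,2r)$; the two requirements must be balanced against the doubling exponent. Second, if $5B_i$ is not compactly contained, one has to apply \eqref{eq4.1-28oct} with a suitable $\theta$, or use Lemma \ref{lem4.3-29oct}. For large balls (when $2r\ge\frac14\diam X$), I would also check that the hypotheses $B\subsetneq X$ of \eqref{eq4.1-28oct} hold; if not, I would reduce to small $r$ or handle the case directly.
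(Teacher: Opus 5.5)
Your proposal follows the paper's proof step by step. The paper likewise discards $\partial^*E$ via Lemma~\ref{lem5.7-19nov} and, for each $y\in I_E\cap B(x,r)$, picks a stopping radius at which the density of $E$ in $B(y,t_y)$ lies in $[\frac{1}{2C_\mu},\frac12]$. It then applies \eqref{Poincare-local-1}, extracts a disjoint $5$-covering subfamily, and bounds $\caps(B(y_i,5t_i),B(y_i,10t_i))\simeq_s \mu(B(y_i,t_i))/t_i^s$ using $B(y_i,10t_i)\subset B(x,2r)$ and disjointness of the $B_i\times B_i$.

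One concrete correction concerns your starting radius $t_0=r/25$, which does not match the exponent in \eqref{lem4.11-1}. The balls $B(y,t_0)$ and $B(x,2r)$ are not concentric, so the ratio of radii $2r/t_0=50$ is not what doubling controls. Doubling only controls $\mu(B(x,2r))/\mu(B(y,t_0))$ through $B(x,2r)\subset B(y,3r)$, so with $\lceil\log_2 50\rceil=6$ you need $2^6t_0\ge 3r$, that is, $t_0\ge 3r/64$. For $t_0=r/25$ you get $3r/t_0=75>64$, and doubling only yields $\mu(B(x,2r))\le C_\mu^7\mu(B(y,t_0))$. Then \eqref{lem4.11-1} bounds the density at $t_0$ only by $C_\mu/2$, which no longer forces density $\le\frac12$. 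Your own threshold $t\ge 2r\cdot 2/50$ is also inconsistent with $t_0=r/25$.

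The paper takes $t_0=r/11$, so $3r/t_0=33\le 64$. Its stopping radii satisfy $t_y<\frac{11}{10}\cdot\frac{r}{11}=\frac{r}{10}$, which keeps $B(y_i,10t_i)\subset B(x,2r)$. In your variant, where $\sigma_y\le t_0$, any $t_0\in[3r/64,r/10]$ works.

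Similarly, the threshold cannot be an arbitrary small constant. It must be $\beta=\frac12$, or some $\beta\in[\frac12,1)$, because density at most $\frac12$ is all the hypothesis gives you at $t_0$.
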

	Here  $\partial^*E, I_E$ are defined as in \eqref{eq2.19-2011}-\eqref{eq2.20-2011}.
	\begin{proof}
		We may assume that ${\bf 1}_E\in \dot W^{s,1}(X)$ because otherwise the claim is obvious.
		By Lemma \ref{lem5.7-19nov}, it suffices to prove that
		\begin{align}\label{lem4.11-2}
			\caps(I_E\cap B(x,r),B(x,2r))\lesssim\|{\bf 1}_E\|_{\dot W^{s,1}(X)}.
		\end{align}
		Let $y\in I_E\cap B(x,r)$. Since $y\in I_E$, there exists $t'\in(0,\frac{r}{11})$
		such that for every $0<t<t'$,
		\begin{equation}\label{eq5.13-19nov}
			\frac{\mu(E\cap B(y,t))}{\mu(B(y,t))}>\frac{1}{2}.
		\end{equation}
		Since $y\in B(x,r)$, we have that $B(y,\frac{r}{11})\subset B(x,2r)$ so that
		in particular $\mu(E\cap B(y,\frac{r}{11}))\leq \mu(E\cap B(x,2r))$,
		and hence the doubling property gives 
		\begin{equation}
			\frac{\mu(E\cap B(y,\frac{r}{11}))}{\mu(B(y,\frac{r}{11}))}
			\leq C_{\mu}^{\lceil \log_2 50 \rceil} \frac{\mu(E\cap B(x,2r))}{ \mu(B(x,2r))}  \overset{\eqref{lem4.11-1}}{\leq}\frac{1}{2}.\label{eq5.14-19nov}
		\end{equation}
		Setting 
		$$
		t_y':=\inf\{t>0:\mu(E\cap B(y,t))\le\frac{1}{2}\mu(B(y,t))\},
		$$
		it follows from \eqref{eq5.13-19nov}-\eqref{eq5.14-19nov}
		that $t'\le t_y'\le\frac{r}{11}$. 
		By the definition of $t'_y$, there is $t_y$ with $t'_y\leq t_y<\tfrac{11}{10}t'_y$ such that
		$\mu(E\cap B(y,t_y))\le\frac{1}{2}\mu(B(y,t_y))$.
		Moreover, since $t_y/2<t'_y$, the definition of $t_y'$ gives that
		$\mu(E\cap B(y,t_y/2))>\frac{1}{2}\mu(B(y,t_y/2))$.
		Using the doubling property
		we obtain that
		\begin{align}
			\frac{1}{2C_{\mu}}<\frac{\mu(E\cap B(y,t_y/2))}{C_{\mu}\mu(B(y,t_y/2))} \leq \frac{\mu(E\cap B(y,t_y))}{\mu(B(y,t_y))}\leq \frac{1}{2}.
		\end{align}
		By an argument similar to \eqref{lem4.8-3}, we get
		\begin{align}\label{lem4.11-3}
			\mu(B(y,t_y))&\lesssim t_y^s\int_{B(y, t_y)}\int_{B(y, t_y)}\frac{|{\bf 1}_E(x)-{\bf 1}_E(y)|}{\mu(B_{x,y})d(x,y)^s}\,d\mu(x)\,d\mu(y)
		\end{align}
		for each $y\in I_E\cap B(x,r)$ and for some $0<t_y<\frac{r}{10}$.
		By the 5-covering lemma, we obtain a sequence $\{B(y_i, t_i)\}_i$ of pairwise disjoint balls with $y_i\in I_E\cap B(x,r)$ and $0<t_i<\frac{r}{10}$, where each ball $B(y_i,t_i)$ satisfies \eqref{lem4.11-3}, such that 
		\[
		I_E\cap B(x,r)\subset \bigcup_{i\in\mathbb N} B(y_i,5 t_i).
		\]
		Then by the  subadditivity of capacity, see e.g. \cite[Proposition 3.4]{BB23}, we have
		\begin{align*}
			&\quad \caps(I_E\cap B(x,r),B(x,2r))\\
			&\leq \sum_{i\in \mathbb N} \caps(B(y_i,5 t_i), B(x,2r))\\
			&{\leq} \sum_{i\in \mathbb N} \caps(B(y_i,5 t_i), B(y_i,10  t_i))
			\quad \textrm{since\ }B(y_i,10 t_i)\subset B(x,2r)\\
			&\overset{\eqref{eq4.1-28oct}}{\simeq_s} \sum_{i\in\mathbb N} \frac{\mu(B(y_i,t_i))}{t_i^s}\\
			&\overset{\eqref{lem4.11-3}}{\lesssim_s}\sum_{i\in \mathbb N}\int_{B(y_i,  t_i)}\int_{B(y_i, t_i)}\frac{|{\bf 1}_E(x)-{\bf 1}_E(y)|}{\mu(B_{x,y})d(x,y)^s}\,d\mu(x)\,d\mu(y)\\
			&\lesssim\|{\bf 1}_{E}\|_{\dot W^{s,1}(X)},
		\end{align*}
		since $\{B(y_i, r_i)\}_i$ are pairwise disjoint.
		This is the desired claim \eqref{lem4.11-2}. 
	\end{proof}
	
	\section{Proofs of main results}
	We recall  that $0<s<1$  and $(X,d,\mu)$ is assumed to be  a complete, connected metric measure space equipped with a doubling measure.
	\subsection{Proof of Theorem \ref{thm1}}\
	
	Theorem \ref{thm1} is obtained from Theorem \ref{thm1-20nov} below.
	In Theorem \ref{thm1-20nov}, we obtain a weak Harnack inequality both for 
	$s$-subminimizers and certain solutions of obstacle problems.
	The latter case will be applied when studying the
	weak Cartan property, see Lemma \ref{empty-intersection}.
	\begin{theorem}
		[Weak Harnack inequality]
		\label{thm1-20nov}

		Let $Q>s$ be a doubling dimension.
		Let $\Omega\subset X$ be a nonempty open set.
		Let  $x_0\in\Omega$, $0<r<R$ with $B(x_0,R)\subset\Omega$, and $k_0\in\mathbb R$.
		Assume that either
		\begin{enumerate}
			\item \label{Thm1.1-item-a}
			$u$ is an $s$-subminimizer 
			{in $\Omega$}; or
			\item \label{Thm1.1-item-b}
			$u$ is a $\mathcal{K}_{\psi,0}(\Omega)$-solution  where $\psi:\Omega\to[-\infty,\infty)$ is a function such that  $\psi\leq k_0$ a.e.\ on $B(x_0,R)$.
		\end{enumerate} 
		Then there exists a constant $C_3>0$  depending on $s$ and the doubling constant such that 
		\[
		\esssup_{y\in B(x_0,r)}(u(y)-k_0)\leq C_3 \left( \frac{R}{R-r}\right)^Q \dashint_{B(x_0,R)} (u-k_0)_+\,d\mu.
		\]
		
		Moreover, if we additionally assume that $\mu$ supports a $1$-Poincar\'e inequality, then the constant $C_3$ can be chosen as
		\[
		C_3 = \left(\frac{C_3'}{s}\right)^{Q/s},
		\]
		where $C_3' >0$ depends only on the doubling and Poincar\'e constants.
		
	\end{theorem}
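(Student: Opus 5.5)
We will run a De Giorgi iteration on the superlevel sets of $u$, with the isoperimetric inequality \eqref{isoperimetric-set} in the role of the Sobolev inequality. The point is that for $p=1$ the energy of a truncation controls, through the coarea formula \eqref{eq2.7-27nov}, the $\dot W^{s,1}$-norm of characteristic functions of level sets. This gives a measure decay with no tail term, because for a subminimizer the comparison can be made with a competitor that changes $u$ only on a superlevel set intersected with a ball.

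\textbf{Step 1 (Caccioppoli-type estimate for level sets).} Fix $r\le\rho<\rho'\le R$ and a level $k\ge k_0$. Take a cutoff $\eta$ that equals $1$ on $B(x_0,\rho)$, is supported in $B(x_0,\rho')$, and is $\tfrac{2}{\rho'-\rho}$-Lipschitz. Test with the nonpositive function $-\eta(u-k)_+$, or alternatively with $-\min\{(u-k)_+,\ \cdot\,\}$ localized by $\eta$. In case \ref{Thm1.1-item-b}, the same competitor $u-\eta(u-k)_+$ is admissible in $\mathcal K_{\psi,0}(\Omega)$, since $\psi\le k_0\le k$ on $B(x_0,R)$, so the minimality of $u$ gives the same inequality. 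Split the energy into the part where both points lie in $\{u>k\}$ and the cross terms. Using $|a|-|a-b|\le|b|$ and the Lipschitz bound \eqref{eq2.6-29oct} for $\eta$, we aim for
\[
\|(u-k)_+\eta\|_{\dot W^{s,1}(X)}\lesssim \frac{1}{s(1-s)(\rho'-\rho)^s}\int_{B(x_0,\rho')}(u-k)_+\,d\mu .
\]
The key observation is that the interaction of $\{u>k\}\cap B_\rho$ with far-away points only helps: lowering $u$ toward $k$ there decreases $|u(x)-u(y)|$ whenever $u(y)\le k$. The terms where $u(y)>k$ far away are absorbed by the truncation itself, because $(u-k)_+$ is the quantity being measured and we do not need it to vanish outside.

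\textbf{Step 2 (measure decay).} Apply \eqref{isoperimetric-func.}, or via coarea the set version \eqref{isoperimetric-set}, to $v=\eta(u-k)_+$, which is supported in $\overline{B(x_0,\rho')}$. Combining the result with H\"older's inequality on the set $\{u>h\}$ with $h<k$, in the form of \eqref{Poincare-local-2} with exponent $Q/(Q-s)$, yields
\[
\int_{B_\rho}(u-k)_+\lesssim \Big(\tfrac{\rho'}{\rho'-\rho}\Big)^s \mu(\{u>k\}\cap B_{\rho'})^{s/Q}\mu(B_{\rho'})^{-s/Q}\int_{B_{\rho'}}(u-k)_+ .
\]
Chebyshev's inequality gives $\mu(\{u>k\}\cap B_{\rho'})\le (k-h)^{-1}\int_{B_{\rho'}}(u-h)_+$. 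Setting $Y_j=\dashint_{B_{\rho_j}}(u-k_j)_+$ with $\rho_j=r+2^{-j}(R-r)$ and $k_j=k_0+d(1-2^{-j})$, we obtain the recursion $Y_{j+1}\le C\,b^j d^{-s/Q}Y_j^{1+s/Q}(R/(R-r))^{s}$ with $b$ depending only on $s$ and $C_\mu$.

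\textbf{Step 3 (iteration).} The standard fast-geometric-convergence lemma gives $Y_j\to0$ provided $Y_0\le C^{-Q/s}b^{-Q^2/s^2}d\,((R-r)/R)^{Q}$. We therefore choose $d=C_3(R/(R-r))^Q Y_0$, which yields $u\le k_0+d$ a.e.\ on $B(x_0,r)$. Under a $1$-Poincar\'e inequality, replacing \eqref{Poincare-local-2} by \eqref{Poincare-local} gives constants of the form $C'/s$, where the factor $1/s$ comes from \eqref{eq2.6-29oct}, and the factor $1-s$ in \eqref{Poincare-local} cancels the $1/(1-s)$ there. Raising to the power $Q/s$ then gives $C_3=(C_3'/s)^{Q/s}$.

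The main obstacle is Step 1. We must handle the nonlocal cross terms $\int_\Omega\int_{\Omega^c}$ and the region $\{u>k\}\setminus B_{\rho'}$ without producing a tail term. The first thing to try is to avoid the product cutoff and instead use the competitor $\min\{u,k\}$ on $B_\rho$ and $u$ elsewhere, apply coarea, and then insert an intermediate radius by averaging over $\rho$, which is where the $(\rho'-\rho)^{-s}$ factor would come from.
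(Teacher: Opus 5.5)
Your overall architecture is the paper's. It has three stages: a tail-free Caccioppoli inequality for $(u-k)_+\phi$ (Proposition~\ref{lem2.2-13oct}); then Sobolev, H\"older and Chebyshev giving $u(k,\rho)\lesssim s^{-1}\big(\tfrac{r_2}{r_2-\rho}\big)^s(k-l)^{-s/Q}u(l,r_2)^{1+s/Q}$; then the iteration with $\rho_n=r+2^{-n}(R-r)$ and $k_n=k_0+d(1-2^{-n})$. Steps~2 and~3 are fine, with one caveat. \eqref{isoperimetric-func.} alone gives no gain; you need the $L^{Q/(Q-s)}$ Sobolev bound for $\phi(u-k)_+$, which comes from \eqref{Poincare-local-2} (or \eqref{Poincare-local}) after absorbing the mean as in the proof of \eqref{isoperimetric-func.}.

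The genuine gap is Step~1, which is the actual content of the theorem and which you only state as a target. Write $K(x,y):=\mu(B_{x,y})^{-1}d(x,y)^{-s}$. Applied termwise, the inequality you name ($|a|-|a-b|\ge-|b|$) only yields $F(u,u+\eta)\ge-\iint|\eta(x)-\eta(y)|\,K$. That is compatible with $F(u,u+\eta)\le 0$ and says nothing about $\|\phi(u-k)_+\|_{\dot W^{s,1}}$. What is missing is a mechanism showing that the competitor $v=u-\phi(u-k)_+$ saves a definite amount of energy, with an error involving $(u-k)_+$ only on $\supp\phi$. Your remark that far-away terms are ``absorbed by the truncation itself'' does not supply this.

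The missing idea is the comonotone splitting $u=m+w$ with $m=\min\{u,k\}$ and $w=(u-k)_+$.
\begin{itemize}
\item Since $m$ and $w$ are nondecreasing in $u$, we have $|u(x)-u(y)|=|m(x)-m(y)|+|w(x)-w(y)|$.
\item Since $v=m+(1-\phi)w$, we have $|v(x)-v(y)|\le|m(x)-m(y)|+|((1-\phi)w)(x)-((1-\phi)w)(y)|$.
\item If $\phi(y)\ge\phi(x)$, expand both $((1-\phi)w)(x)-((1-\phi)w)(y)$ and $(\phi w)(x)-(\phi w)(y)$ as $(\cdot)(w(x)-w(y))+(\phi(y)-\phi(x))w(y)$ up to sign.
\end{itemize}
This gives, pointwise,
\[
|(\phi w)(x)-(\phi w)(y)|\le \big(|u(x)-u(y)|-|v(x)-v(y)|\big)+2\,w(z)\,|\phi(x)-\phi(y)|,
\]
where $z\in\{x,y\}$ is the point where $\phi$ is larger.

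Integrate over all pairs outside $\Omega^c\times\Omega^c$; $\phi w$ vanishes on $\Omega^c$, so these pairs carry the whole norm. Then use $F(u,v)\le0$, which holds in both cases (a) and (b), as you correctly note for (b). This gives $\|\phi w\|_{\dot W^{s,1}(X)}\le 2\iint w(z)|\phi(x)-\phi(y)|\,K$. Since $z\in\supp\phi$ whenever $\phi(x)\ne\phi(y)$, \eqref{eq2.6-29oct} bounds this by $\frac{C}{s(1-s)}(\rho'-\rho)^{-s}\int_{B(x_0,\rho')}w\,d\mu$, with no tail. That is the real reason values of $u$ far away never enter. Your fallback (cutting off with $\mathbf 1_{B_\rho}$ and averaging in $\rho$) also needs this identity for pairs inside $B_\rho$.

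The paper reaches the same estimate by a different route. It passes to the first variation (Lemma~\ref{lem1.3-1oct}) and sorts pairs in $B\times B$ by the order of $u$ and of $\eta$, which isolates the error $\iint_{B\times B}\max\{w(x),w(y)\}|\phi(x)-\phi(y)|\,K$. It handles the $B\times B^c$ terms using only that the sign factor has modulus at most $1$, together with \eqref{eq2-3Lemma2} and the fact that $\phi$ is supported in $B(x_0,\tfrac{R+\rho}{2})$.
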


	First we note the following standard connection between subminimizers and subsolutions.
	\begin{lemma}\label{lem1.3-1oct}
		If $f$ is an $s$-subminimizer 
		{in a given nonempty open set $\Omega$,}
		where $0<s<1$, then
		for every nonpositive  function  $\eta\in W^{s,1}_0(\Omega)$,
		{
			\begin{align*}
				&0\geq  \left( \int_\Omega\int_\Omega +2\int_\Omega\int_{\Omega^c} \right) \frac{ \frac{u(x)-u(y)}{|u(x)-u(y)|} (\eta(y)-\eta(x)) }{\mu(B_{x,y}) d(x,y)^s} 
				{\bf 1}_{\{ u(x)\neq u(y)\}} \,d\mu(x)\,d\mu(y)\\
				&\quad  - \left( \int_\Omega\int_\Omega +2\int_\Omega\int_{\Omega^c} \right) \frac{  |\eta(x)-\eta(y)| }{\mu(B_{x,y}) d(x,y)^s} {\bf 1}_{\{ u(x)= u(y)\}} \,d\mu(x)\,d\mu(y).
			\end{align*}
		}
		
	\end{lemma}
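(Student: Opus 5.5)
The lemma is a first-variation (one-sided directional derivative) inequality. Note first a harmless typo: the hypothesis should read ``$u$ is an $s$-subminimizer''. The plan is to test the subminimizing property with $\varepsilon\eta$ for $\varepsilon\in(0,1]$, divide by $\varepsilon$, and let $\varepsilon\to0^+$ using dominated convergence.

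Fix a nonpositive $\eta\in W^{s,1}_0(\Omega)$. For every $\varepsilon\in(0,1]$ the function $\varepsilon\eta$ is again nonpositive and in $W^{s,1}_0(\Omega)$, so $F(u,u+\varepsilon\eta)\le 0$. Dividing by $\varepsilon$ gives
\[
0\ge \left(\int_\Omega\int_\Omega+2\int_\Omega\int_{\Omega^c}\right)
\frac{|a|-|a+\varepsilon b|}{\varepsilon}\,\frac{d\mu(x)\,d\mu(y)}{\mu(B_{x,y})d(x,y)^s},
\]
where we write $a:=u(x)-u(y)$ and $b:=\eta(x)-\eta(y)$. Before manipulating this, I would check that $a$ is finite for a.e.\ pair in the domain of integration. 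On $\Omega\times\Omega$ this follows from $u\in\dot W^{s,1}(\Omega)$. On $\Omega\times\Omega^c$ there is a subtlety: only $F(u,u+\eta)$ is given meaning, and $u$ could be infinite there. If so, the difference $|a|-|a+\varepsilon b|$ should be read as zero wherever $a=\pm\infty$, since $b$ is finite a.e.; I would check that this convention is consistent with how $F$ is interpreted.

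The pointwise limit is elementary. For $a\neq0$ and finite, $(|a|-|a+\varepsilon b|)/\varepsilon\to -\tfrac{a}{|a|}b$ as $\varepsilon\to0^+$. For $a=0$, the quotient equals $-|b|$ identically. The resulting limit integrand is
\[
\frac{a}{|a|}\,(\eta(y)-\eta(x))\,{\bf 1}_{\{a\ne0\}}-|\eta(x)-\eta(y)|\,{\bf 1}_{\{a=0\}},
\]
which is exactly the integrand in the statement, with the correct sign.

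The only real step is justifying the passage to the limit. The quotient is bounded in absolute value by $|b|$, by the reverse triangle inequality. We also have
\[
\left(\int_\Omega\int_\Omega+2\int_\Omega\int_{\Omega^c}\right)\frac{|\eta(x)-\eta(y)|}{\mu(B_{x,y})d(x,y)^s}\,d\mu(x)\,d\mu(y)\le 2\|\eta\|_{\dot W^{s,1}(X)}<\infty,
\]
because $\eta\in W^{s,1}(X)$. So $|b|$ is an integrable majorant, and dominated convergence yields the claimed inequality.

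I expect no serious obstacle. The point needing care is the integrability bookkeeping: $F(u,u+\varepsilon\eta)$ is defined as the integral of a difference, not as a difference of two possibly infinite quantities, and dominating this difference by $|b|$ handles exactly that.
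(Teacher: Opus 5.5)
This is essentially the paper's argument: it also uses $F(u,u+t\eta)\le 0$ for all $t>0$, divides by $t$, and passes to the limit. The paper does this via Fatou's lemma, tacitly using the lower bound $-|\eta(x)-\eta(y)|$ on the difference quotients; your dominated convergence with the majorant $|\eta(x)-\eta(y)|$ makes the required integrability explicit.

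One small caveat concerns your suggested convention of reading the quotient as zero where $u=\pm\infty$ on $\Omega^c$. That convention would give a zero limit integrand on those pairs rather than $\frac{u(x)-u(y)}{|u(x)-u(y)|}(\eta(y)-\eta(x))$. The paper's proof also tacitly treats $u(x)-u(y)$ as finite a.e., so this is not a point of divergence from it.
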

	\begin{proof}
		By Fatou's lemma, and since $F(u, u+t\eta)\leq 0$ for each $t>0$, we obtain that
		\begin{align*}
			\left( \int_\Omega\int_\Omega +2\int_\Omega\int_{\Omega^c} \right) &  \liminf_{t\to 0^+} \frac{{|u(x)-u(y)| - |(u+t\eta)(x)-(u+t\eta)(y)| }}{t} \frac{d\mu(x)d\mu(y)}{\mu(B_{x,y})d(x,y)^s} \\
			\leq & \liminf_{t\to 0^+} \frac{F(u,u+t\eta)}{t} \leq 0.
		\end{align*}
		This gives the desired claim because if $u(x)= u(y)$ then 
		\[
		\frac{{|u(x)-u(y)| - |(u+t\eta)(x)-(u+t\eta)(y)| }}{t} =-|\eta(x)-\eta(y)|,
		\]
		and if $u(x)\neq u(y)$ then
		
		\begin{align*}
			&  \liminf_{t\to 0^+}  \frac{{|u(x)-u(y)| - |(u+t\eta)(x)-(u+t\eta)(y)| }}{t} \\
			=&	-\frac{ u(x)-u(y)}{ |u(x)-u(y)| } (\eta(x)-\eta(y)).
		\end{align*}
	\end{proof}
	
	Toward proving the weak Harnack inequality, we first show the following
	Caccioppoli--De~Giorgi type inequality.
	
	\begin{proposition}\label{lem2.2-13oct}

		Let $\Omega$ be a nonempty open set.
		Let $k\in\mathbb{R}$, $x_0\in \Omega$, $0<\rho<R$ with $B(x_0,R)\subset \Omega$, and set $r:=\frac{R+\rho}{2}$.
		Assume that either
		\begin{enumerate}
			\item \label{item-a}
			$u$ is an $s$-subminimizer 
			{in $\Omega$; or }
			\item \label{item-b}
			$u$ is a $\mathcal{K}_{\psi,0}(\Omega)$-solution where $\psi:\Omega\to[-\infty,\infty)$ is a function such that $\psi\leq k$
			a.e.\ on $B(x_0,R)$.
		\end{enumerate}
		Let $\phi$ be a $\frac{1}{r-\rho}$-Lipschitz function with $0\leq \phi\leq 1$, $\phi\equiv 1 $ on $B(x_0,\rho)$ and $\phi\equiv  0$ on $ X\setminus B(x_0,r)$.
		Then we have the following Caccioppoli--De~Giorgi type inequality:  
		there exists a constant $C>0$, depending only on the doubling constant, such that
		\begin{equation}\label{eq3.2-15mar}
			\|(u-k)_+\phi\|_{\dot W^{s,1}(X)}
			\leq 
			\frac{C}{s(1-s)}
			\frac{1}{(R-\rho)^s}
			\int_{B(x_0,R)} (u-k)_+\, d\mu.
		\end{equation}
		
	\end{proposition}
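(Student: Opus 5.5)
The plan is to test the minimality of $u$ against the competitor $u-w$, where $w:=(u-k)_+\phi$, and to derive \eqref{eq3.2-15mar} from a pointwise algebraic inequality plus the kernel estimate \eqref{eq2.6-29oct}. Throughout I write $v:=(u-k)_+$ and $m:=\min\{u,k\}$, so that $u=m+v$ and $u-w=m+v(1-\phi)$. We may assume the right-hand side of \eqref{eq3.2-15mar} is finite. Then $v\in L^1(B(x_0,R))$ (in any case this follows from \eqref{eq2.1-11dec}), and $w$ vanishes outside $B(x_0,r)\subset\Omega$. Together with the estimate below, this gives $w\in W^{s,1}_0(\Omega)$ with $w\ge0$. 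Some care is needed to justify $\|w\|_{\dot W^{s,1}(X)}<\infty$ a priori: one can truncate $v$ at level $M$ and let $M\to\infty$ at the end, or simply use the pointwise bound below, which dominates $|w(x)-w(y)|$ by integrable quantities.

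\textbf{Step 1: admissibility of the competitor.} In case \ref{item-a}, $-w$ is nonpositive and lies in $W^{s,1}_0(\Omega)$, so $F(u,u-w)\le 0$. In case \ref{item-b}, on $B(x_0,R)$ we have $u-w\ge m\ge \min\{\psi,k\}=\psi$ a.e., because $u\ge\psi$ and $\psi\le k$ there. Outside $B(x_0,r)$ we have $u-w=u$. Hence $u-w\in\mathcal K_{\psi,0}(\Omega)$, and minimality gives $\|u\|_{\dot W^{s,1}(X)}\le\|u-w\|_{\dot W^{s,1}(X)}$. Since $u$ and $u-w$ agree on $\Omega^c$, this is again $F(u,u-w)\le 0$, with $F$ understood as the integral of the pointwise difference of the integrands.

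\textbf{Step 2: pointwise inequality.} Write $\Delta f:=f(x)-f(y)$. Both $m$ and $v$ are nondecreasing functions of $u$, so $\Delta m$ and $\Delta v$ have the same sign and
\[
|\Delta u|=|\Delta m|+|\Delta v|.
\]
The triangle inequality gives $|\Delta(u-w)|\le|\Delta m|+|\Delta(v(1-\phi))|$. Expanding with base point $x$,
\[
\Delta(v\phi)=\phi(y)\Delta v+v(x)\Delta\phi,\qquad \Delta(v(1-\phi))=(1-\phi(y))\Delta v-v(x)\Delta\phi .
\]
Since $0\le\phi\le1$, adding absolute values yields $|\Delta w|+|\Delta(v(1-\phi))|\le|\Delta v|+2v(x)|\Delta\phi|$. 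Combining these three facts gives
\[
|\Delta w|\le |\Delta u|-|\Delta(u-w)|+2v(x)|\Delta\phi|.
\]
Expanding instead with base point $y$ gives the same bound with $v(y)$ in place of $v(x)$. Hence the error term may be replaced by $2\min\{v(x),v(y)\}|\Delta\phi|$.

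\textbf{Step 3: integration.} Since $w=0$ on $\Omega^c$, we have
\[
\|w\|_{\dot W^{s,1}(X)}=\Bigl(\int_\Omega\int_\Omega+2\int_\Omega\int_{\Omega^c}\Bigr)\frac{|\Delta w|}{\mu(B_{x,y})d(x,y)^s}\,d\mu\,d\mu .
\]
Integrate the pointwise inequality against this weighted kernel. The first two terms contribute $F(u,u-w)\le0$. For the error term, note that $\Delta\phi\neq0$ forces at least one of $x,y$ into $B(x_0,r)$, and the $\min$ lets us put $v$ at that point. The error is therefore bounded by
\[
8\int_{B(x_0,r)}v(z)\int_X\frac{|\phi(z)-\phi(y)|}{\mu(B_{z,y})d(z,y)^s}\,d\mu(y)\,d\mu(z).
\]
Using \eqref{eq2.6-29oct} for a $\frac{1}{r-\rho}$-Lipschitz function with values in $[0,1]$, the inner integral is at most $\frac{C}{s(1-s)}(r-\rho)^{-s}$. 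Since $r-\rho=(R-\rho)/2$, this gives \eqref{eq3.2-15mar}.

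The main obstacle is Step 2, namely finding the decomposition $u=m+v$ that makes the $p=1$ energy split exactly, so that the $\phi$-cutoff costs only a term of the form $v|\Delta\phi|$. A secondary point is the bookkeeping of possibly infinite quantities in $F(u,u-w)$. I would handle this by working with the pointwise difference of integrands, which is integrable by the Step 2 bound, rather than subtracting energies.
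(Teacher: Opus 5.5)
Your argument is correct, and it takes a genuinely different route from the paper.

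The paper linearizes first. With $\eta=-\phi(u-k)_+$ it lets $t\to0^+$ in $F(u,u+t\eta)\le0$ (Lemma~\ref{lem1.3-1oct}, and the same limit argument in case (b)) to obtain the sign-function inequality \eqref{eq3.2-20nov}. It then splits this into two pieces. The local term $T_{\rm L}$ on $B\times B$ is bounded below by a case analysis over $\{u(x)<u(y)\}$, $\{u(x)=u(y)\}$ and the relative order of $\eta(x),\eta(y)$. The nonlocal term $T_{\rm NL}$ is controlled by the tail estimate \eqref{eq2-3Lemma2}. This first bounds the $\dot W^{s,1}(B(x_0,R))$ seminorm, and \eqref{eq2-3Lemma2} is used a second time to reach $\dot W^{s,1}(X)$.

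You instead test with the finite competitor $u-w$ (i.e.\ $t=1$, which is admissible in both cases). The exact $p=1$ splitting $|\Delta u|=|\Delta m|+|\Delta v|$ for $u=\min\{u,k\}+(u-k)_+$ then reduces everything to one pointwise inequality, whose only error term is $2\min\{v(x),v(y)\}|\Delta\phi|$. Since \eqref{eq2.6-29oct} bounds $\int_X|\phi(z)-\phi(y)|\,\mu(B_{z,y})^{-1}d(z,y)^{-s}\,d\mu(y)$ uniformly in $z$, including far-away $y$, the interaction with $\Omega^c$ is absorbed into that same error term. You therefore reach the $\dot W^{s,1}(X)$ norm in one step, with no limit in $t$, no Fatou argument and no sign bookkeeping.

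The paper's route has the merit of needing only the weak-subsolution inequality \eqref{eq3.2-20nov}, in parallel with \cite{DKP16}. Yours is shorter, but it relies on the additive splitting of the energy, which is special to $p=1$.

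One detail in your integrability bookkeeping needs care. On pairs where one point, say $y$, lies outside $B(x_0,R)$ (in particular $y\in\Omega^c$), do not dominate $|\Delta w|$ by $|\Delta v|$. That quantity need not be integrable there, since $u$ is only assumed to lie in $\dot W^{s,1}(\Omega)$. Use instead that $\phi(y)=0$, so $\Delta w=v(x)\Delta\phi$ exactly. On $B(x_0,R)\times B(x_0,R)$ use $|\Delta w|\le|\Delta u|+\min\{v(x),v(y)\}|\Delta\phi|$. Together these give $\|w\|_{\dot W^{s,1}(X)}<\infty$. The reverse triangle inequality then makes $|\Delta u|-|\Delta(u-w)|$ integrable, so integrating your pointwise inequality to get $F(u,u-w)$ is legitimate.
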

	
	\begin{proof}
		Denote $B:=B(x_0,R)$. 
		Let $\eta:=-\phi (u-k)_+$.  We first prove that if  either  \ref{item-a} or \ref{item-b} holds, then 
		{
			\begin{align}\notag
				0\geq&\, \left( \int_\Omega\int_\Omega +2\int_\Omega\int_{\Omega^c} \right) \frac{\frac{u(x)-u(y)}{|u(x)-u(y)|} (\eta(y)-\eta(x)) }{\mu(B_{x,y}) d(x,y)^s} {\bf 1}_{\{u(x)\neq u(y) \}} \,d\mu(x)\,d\mu(y)\\
				&\, - \left( \int_\Omega\int_\Omega +2\int_\Omega\int_{\Omega^c} \right) \frac{ |\eta(x)-\eta(y)| }{\mu(B_{x,y}) d(x,y)^s} {\bf 1}_{\{u(x)=u(y)\}}\,d\mu(x)\,d\mu(y) \label{eq3.2-20nov}.
			\end{align}
		}
		By Lemma \ref{lem1.3-1oct}, if $u$ satisfies \ref{item-a}, then  \eqref{eq3.2-20nov}  holds.
		We now consider the case \ref{item-b}. Let $u$ be a $\mathcal{K}_{\psi,0}(\Omega)$-solution,
		and $\psi\leq k$ a.e. on $B(x_0,R)$. Hence $u\in \dot W^{s,1}(X)$,
		$u\geq \psi$ a.e. on $\Omega$, and $u=0$ a.e. on $X\setminus\Omega$.
		Let $0<t<\frac{1}{2}$. Set $v:=u+t\eta=u-t\phi(u-k)_+$. Then for a.e. $x\in\Omega$,
		\begin{align*}
			v(x)& =  \begin{cases}
				(1-t\phi(x))u(x)+t\phi(x) k& {\rm \ \ if\ \ }u(x)>k {\rm \ and\ }x\in B,\\
				u(x)& {\rm \ \ otherwise\ \ }
			\end{cases}\\
			&\geq   \psi(x)
		\end{align*}
		since $u\geq \psi$ a.e. on $\Omega$, $k\geq \psi$ a.e. on $B$, and $0\leq t\phi\leq 1$.
		Moreover, we have $v\in \dot W^{s,1}(X)$ because 
		\[
		\|v\|_{\dot W^{s,1}(X)}
		\leq 
		{ \|u\|_{\dot W^{s,1}(X)} }
		+\|t\phi(u-k)_+\|_{\dot W^{s,1}(X)}
		\leq 2 \|u\|_{\dot W^{s,1}(X)}<\infty.
		\]
		Notice that   $v=u=0$ a.e. on $X\setminus\Omega$. 
		Combining this with the two above properties, we obtain that
		$v\in \mathcal K_{\psi,0}(\Omega)$. Hence since $u$ is a $\mathcal{K}_{\psi,0}(\Omega)$-solution,
		we have $\|u\|_{\dot W^{s,1}(X)}\leq \|v\|_{\dot W^{s,1}(X)}=\|u+t\eta\|_{\dot W^{s,1}(X)}$.
		Since $v=u=0$ a.e. on $X\setminus\Omega$, it follows that 
		{ $0\geq F(u,u+t\eta)$ for  all $0<t<\frac{1}{2}$ because 
			\begin{align*}
				0
				&\geq  \int_X\int_X \frac{|u(x)-u(y)|-|v(x)-v(y)|}{\mu(B_{x,y})d(x,y)^s}\,d\mu(x)\,d\mu(y)\\
				&=\left(\int_{\Omega}\int_{\Omega}+2\int_{\Omega}\int_{\Omega^c} \right)\frac{|u(x)-u(y)|-|v(x)-v(y)|}{\mu(B_{x,y})d(x,y)^s}\,d\mu(x)\,d\mu(y)\\
				& = F(u,u+t\eta)
			\end{align*}
			for all $0<t<\frac{1}{2}$. 
		}
		Using this, the proof of Lemma \ref{lem1.3-1oct} gives the inequality \eqref{eq3.2-20nov}.

		Next, by the fact that $\eta\equiv 0$ on $B^c$, we have
		{
			\begin{align*}
				&- \left( \int_\Omega\int_\Omega +2\int_\Omega\int_{\Omega^c} \right) \frac{ |\eta(x)-\eta(y)| }{\mu(B_{x,y}) d(x,y)^s} {\bf 1}_{\{u(x)=u(y)\}}\,d\mu(x)\,d\mu(y)\\
				&=-  \left( \int_B\int_{B} +2\int_{B}\int_{B^c} \right) \frac{ |\eta(x)-\eta(y)| }{\mu(B_{x,y}) d(x,y)^s} {\bf 1}_{\{u(x)=u(y)\}}\,d\mu(x)\,d\mu(y)\\
				&= -2\int_B\int_{B}  \frac{ \eta(y)-\eta(x) }{\mu(B_{x,y}) d(x,y)^s} {\bf 1}_{\{u(x)=u(y)\}}{\bf 1}_{\{ \eta(x)\leq \eta(y)\}}\,d\mu(x)\,d\mu(y)\notag \\
				&\quad   -2\int_{B} \int_{B^c} \frac{ \eta(y) }{\mu(B_{x,y}) d(x,y)^s} {\bf 1}_{\{u(x)=u(y)\}}\,d\mu(x)\,d\mu(y)
			\end{align*}
			and
			\begin{align*}
				&\left( \int_\Omega\int_\Omega+2\int_{\Omega}\int_{\Omega^c}\right) \frac{\frac{u(x)-u(y)}{|u(x)-u(y)|} (\eta(y)-\eta(x)) }{\mu(B_{x,y}) d(x,y)^s} {\bf 1}_{\{ u(x)\neq u(y)\}}\,d\mu(x)\,d\mu(y)\notag \\
				&= \left( \int_B\int_B+2\int_{B}\int_{B^c}\right) \frac{\frac{u(x)-u(y)}{|u(x)-u(y)|} (\eta(y)-\eta(x)) }{\mu(B_{x,y}) d(x,y)^s} {\bf 1}_{\{ u(x)\neq u(y)\}}\,d\mu(x)\,d\mu(y)\notag \\
				&= \int_B\int_B\frac{\frac{u(x)-u(y)}{|u(x)-u(y)|} (\eta(y)-\eta(x)) }{\mu(B_{x,y}) d(x,y)^s} {\bf 1}_{\{ u(x)\neq u(y)\}}\,d\mu(x)\,d\mu(y)  \notag  \\
				&\quad +2\int_{B}\int_{B^c} \frac{\frac{u(x)-u(y)}{|u(x)-u(y)|} \eta(y) }{\mu(B_{x,y}) d(x,y)^s} {\bf 1}_{\{ u(x)\neq u(y)\}}\,d\mu(x)\,d\mu(y).
			\end{align*}
			By \eqref{eq3.2-20nov}, it follows that
			\begin{align}\label{eq1-2oct}
				0\geq T_{\rm L} + T_{\rm NL}
			\end{align}
			where 
			\begin{align*}
				T_{\rm L} &:=  \int_B\int_B\frac{\frac{u(x)-u(y)}{|u(x)-u(y)|} (\eta(y)-\eta(x)) }{\mu(B_{x,y}) d(x,y)^s} {\bf 1}_{\{ u(x)\neq u(y)\}}\,d\mu(x)\,d\mu(y) \\
				& \quad -2\int_B\int_{B}  \frac{ \eta(y)-\eta(x) }{\mu(B_{x,y}) d(x,y)^s} {\bf 1}_{\{u(x)=u(y)\}}{\bf 1}_{\{ \eta(x)\leq \eta(y)\}}\,d\mu(x)\,d\mu(y)
			\end{align*}
			and
			\begin{align*}
				T_{\rm NL} &:= 2\int_{B}\int_{B^c} \frac{\frac{u(x)-u(y)}{|u(x)-u(y)|} \eta(y) }{\mu(B_{x,y}) d(x,y)^s} {\bf 1}_{\{ u(x)\neq u(y)\}}\,d\mu(x)\,d\mu(y)\\
				& \quad -2\int_{B} \int_{B^c} \frac{ \eta(y) }{\mu(B_{x,y}) d(x,y)^s} {\bf 1}_{\{u(x)=u(y)\}}\,d\mu(x)\,d\mu(y).
			\end{align*}
		}
		We need to estimate the ``local'' term $T_{\mathrm{L}}$ and the ``nonlocal''
		term $T_{\mathrm{NL}}$.  
		{We  estimate the first term of $T_{\rm L}$, denoted $T_{\rm L,1}$, that} 
		\begin{align*}
			T_{\rm L, 1} & := \int_B\int_B\frac{\frac{u(x)-u(y)}{|u(x)-u(y)|} (\eta(y)-\eta(x)) }{\mu(B_{x,y}) d(x,y)^s} {\bf 1}_{\{ u(x)\neq  u(y)\}} \,d\mu(x)\,d\mu(y) \\
			&=\int_B\int_B\frac{\frac{u(x)-u(y)}{|u(x)-u(y)|} (\eta(y)-\eta(x)) }{\mu(B_{x,y}) d(x,y)^s} {\bf 1}_{\{ u(x)> u(y)\}} \,d\mu(x)\,d\mu(y)  \notag \\
			&\quad +\int_B\int_B\frac{\frac{u(x)-u(y)}{|u(x)-u(y)|} (\eta(y)-\eta(x)) }{\mu(B_{x,y}) d(x,y)^s} {\bf 1}_{\{ u(x)< u(y)\}} \,d\mu(x)\,d\mu(y) \notag  \\
			&=  \int_B\int_B\frac{\eta(y)-\eta(x) }{\mu(B_{x,y}) d(x,y)^s} {\bf 1}_{\{ u(x)> u(y)\}} \,d\mu(x)\,d\mu(y)  \notag \\
			&\quad +\int_B\int_B\frac{ \eta(x)-\eta(y) }{\mu(B_{x,y}) d(x,y)^s} {\bf 1}_{\{ u(x)< u(y)\}} \,d\mu(x)\,d\mu(y)\notag \\
			&= 2  \int_B\int_B \frac{\eta(x)-\eta(y) }{\mu(B_{x,y}) d(x,y)^s} {\bf 1}_{\{ u(x)< u(y)\}} \,d\mu(x)\,d\mu(y)\notag 
		\end{align*}
		by a change of variables. From this, we have that
		\begin{align}
			T_{\rm L,1}
			&= 2  \int_B\int_B \frac{\eta(x)-\eta(y) }{\mu(B_{x,y}) d(x,y)^s} {\bf 1}_{\{ u(x)< u(y)\}} {\bf 1}_{\{\eta(x)\geq \eta(y)\}} \,d\mu(x)\,d\mu(y)\notag \\
			&\quad + 2  \int_B\int_B \frac{\eta(x)-\eta(y) }{\mu(B_{x,y}) d(x,y)^s} {\bf 1}_{\{ u(x)< u(y)\}} {\bf 1}_{\{\eta(x)\leq \eta(y)\}} \,d\mu(x)\,d\mu(y)\notag \\
			&=  2\int_B\int_B \frac{\eta(x)-\eta(y) }{\mu(B_{x,y}) d(x,y)^s} {\bf 1}_{\{ \eta(x)\geq \eta(y)\}} \,d\mu(x)\,d\mu(y)\notag \\ 
			&\quad - 2  \int_B\int_B \frac{\eta(x)-\eta(y) }{\mu(B_{x,y}) d(x,y)^s} {\bf 1}_{\{ u(x)= u(y)\}} {\bf 1}_{\{\eta(x)\geq \eta(y)\}} \,d\mu(x)\,d\mu(y) \notag \\
			&\quad - 2  \int_B\int_B \frac{\eta(x)-\eta(y) }{\mu(B_{x,y}) d(x,y)^s} {\bf 1}_{\{ u(x)> u(y)\}} {\bf 1}_{\{\eta(x)\geq \eta(y)\}} \,d\mu(x)\,d\mu(y) \notag \\
			&\quad + 2  \int_B\int_B \frac{\eta(x)-\eta(y) }{\mu(B_{x,y}) d(x,y)^s} {\bf 1}_{\{ u(x)< u(y)\}} {\bf 1}_{\{\eta(x)\leq \eta(y)\}} \,d\mu(x)\,d\mu(y) \notag \\
			&=   \int_B\int_B \frac{|\eta(x)-\eta(y)| }{\mu(B_{x,y}) d(x,y)^s}  \,d\mu(x)\,d\mu(y)\notag \\ 
			&\quad - 2  \int_B\int_B \frac{\eta(x)-\eta(y) }{\mu(B_{x,y}) d(x,y)^s} {\bf 1}_{\{ u(x)= u(y)\}} {\bf 1}_{\{\eta(x)\geq \eta(y)\}} \,d\mu(x)\,d\mu(y) \notag \\
			&\quad + 4 \int_B\int_B \frac{\eta(x)-\eta(y) }{\mu(B_{x,y}) d(x,y)^s} {\bf 1}_{\{ u(x)< u(y)\}} {\bf 1}_{\{\eta(x)\leq \eta(y)\}} \,d\mu(x)\,d\mu(y) .\notag
		\end{align}
		{
			Hence
			\begin{align}
				\notag T_{\rm L}&= T_{\rm L,1}   -2\int_B\int_{B}  \frac{ \eta(y)-\eta(x) }{\mu(B_{x,y}) d(x,y)^s} {\bf 1}_{\{u(x)=u(y)\}}{\bf 1}_{\{ \eta(x)\leq \eta(y)\}}\,d\mu(x)\,d\mu(y)\\
				& \geq  \int_B\int_B \frac{|\eta(x)-\eta(y)| }{\mu(B_{x,y}) d(x,y)^s}  \,d\mu(x)\,d\mu(y)\notag\\
				\label{eq2-2oct} & \quad - 8 \int_B\int_B \frac{\eta(y)-\eta(x) }{\mu(B_{x,y}) d(x,y)^s} {\bf 1}_{\{ u(x)\leq  u(y)\}} {\bf 1}_{\{\eta(x)\leq \eta(y)\}} \,d\mu(x)\,d\mu(y).
			\end{align}
		}
		Next, for the non-local term $T_{\rm NL}$,
		we estimate 
		\begin{align*}
			T_{\rm NL}&\geq  -2 \int_{B} \eta(y) \left(\int_{B^c} \frac{1}{\mu(B_{x,y}) d(x,y)^s}\,d\mu(x) \right) d\mu(y)\\
			&=  -2 \int_{B} (u-k)_+(y)\phi(y) \left(\int_{B^c} \frac{1}{\mu(B_{x,y}) d(x,y)^s}\,d\mu(x) \right) d\mu(y).
		\end{align*}
		Setting $r_y=R-d(x_0,y)$ for each $y\in B$, we have that  $B(y,r_y)\subset B$
		for every $y\in B$. It follows
		from the doubling property that  for every $y\in B$,
		\begin{align}\label{eq2-3Lemma2}
			\int_{B^c} \frac{1}{\mu(B_{x,y}) d(x,y)^s}\,d\mu(x)
			&\leq \int_{X\setminus B(y,r_y)} \frac{d\mu(x)}{\mu(B_{x,y}) d(x,y)^s}\notag\\
			&=  \sum_{j=0}^\infty \int_{B(y,\,2^{j+1}r_y)\setminus B(y,\, 2^jr_y)} \frac{d\mu(x)}{\mu(B_{x,y}) d(x,y)^s}\notag\\
			&\simeq   \sum_{j=0}^\infty \frac{1}{(2^j r_y)^s}= \frac{1}{1-2^{-s}}\frac{1}{r_y^s}\notag\\
			& \lesssim \frac{1}{s}\frac{1}{(R-d(x_0,y))^s}.
		\end{align}
		From the last two estimates, we get
		\begin{align}
			T_{\rm NL}
			&\gtrsim  - \frac{1}{s}\int_{B} \frac{(u-k)_+(y)\phi(y)}{(R-d(x_0,y))^s}\,d\mu(y) \notag\\ 
			&=  - \frac{1}{s}\int_{B(x_0,r)} \frac{(u-k)_+(y)\phi(y)}{(R-d(x_0,y))^s}\,d\mu(y)
			\quad\textrm{since $\phi|_{X\setminus B(x_0,r)}\equiv 0$} \notag\\ 
			& \geq - \frac{2^s}{s}\int_{B(x_0,r)} \frac{(u-k)_+(y)\phi(y)}{(R-\rho)^s}\,d\mu(y), \notag
		\end{align}
		because for every $y\in B(x_0,r)$, we have from  $r=\frac{R+\rho}{2}$ that
		\[
		R-d(x_0,y)\geq R-r = \frac{R-\rho}{2}.
		\]
		Combining this with \eqref{eq1-2oct}-\eqref{eq2-2oct}, we get
		{
			\begin{align*}
				0\gtrsim &  \int_B\int_B \frac{|\eta(x)-\eta(y)| }{\mu(B_{x,y}) d(x,y)^s}  \,d\mu(x)\,d\mu(y)\\
				&-   \int_B\int_B \frac{\eta(y)-\eta(x) }{\mu(B_{x,y}) d(x,y)^s} {\bf 1}_{\{ u(x)\leq u(y)\}} {\bf 1}_{\{\eta(x)\leq \eta(y)\}} \,d\mu(x)\,d\mu(y)\\
				& - \frac{1}{s} \int_{B} \frac{(u-k)_+(x)\phi(x)}{(R-\rho)^s} \,d\mu(x).
			\end{align*}
		}
		We have
		\begin{align*}
			&  \int_B\int_B \frac{\eta(y)-\eta(x) } {\mu(B_{x,y}) d(x,y)^s} {\bf 1}_{\{ u(x)\leq u(y)\}} {\bf 1}_{\{\eta(x)\leq \eta(y)\}} \,d\mu(x)\,d\mu(y)\\
			&=  \int_B\int_B \frac{(u-k)_+(y) (\phi(x)-\phi(y))}{\mu(B_{x,y}) d(x,y)^s} {\bf 1}_{\{ u(x)\leq u(y)\}} {\bf 1}_{\{\eta(x)\leq \eta(y)\}} \,d\mu(x)\,d\mu(y)\\
			&\quad +  \int_B\int_B \frac{((u-k)_+(x) -(u-k)_+(y))\phi(x)}{\mu(B_{x,y}) d(x,y)^s} {\bf 1}_{\{ u(x)\leq u(y)\}} {\bf 1}_{\{\eta(x)\leq \eta(y)\}} \,d\mu(x)\,d\mu(y)\\
			&\leq   \int_B\int_B \frac{\max\{(u-k)_+(x), (u-k)_+(y) \} |\phi(y)-\phi(x)|}{\mu(B_{x,y}) d(x,y)^s}  \,d\mu(x)\,d\mu(y),
		\end{align*}
		where the last line is obtained because the second term in the third line is negative. Combining the last two estimates, we conclude that		
		{
			\begin{align}
				0\gtrsim&  \int_B\int_B \frac{|\eta(x)-\eta(y)| }{\mu(B_{x,y}) d(x,y)^s}  \,d\mu(x)\,d\mu(y) \notag \\ \notag
				&-   \int_B\int_B \frac{\max\{(u-k)_+(x), (u-k)_+(y) \} |\phi(y)-\phi(x)|}{\mu(B_{x,y}) d(x,y)^s}  \,d\mu(x)\,d\mu(y)
				\\ & - \frac{1}{s} \int_{B} \frac{ (u-k)_+(x)\phi(x) }{(R-\rho)^s}\,d\mu(x).\label{eq2.3-13oct}
			\end{align}
		}
		Notice that $\phi$ is $\frac{1}{r-\rho}$-Lipschitz, and so $\frac{2}{R-\rho}$-Lipschitz because $r=\frac{R+\rho}{2}$. We have from \eqref{eq2.6-29oct} that
		\[
		\int_{B}  \frac{  |\phi(y)-\phi(x)|}{\mu(B_{x,y}) d(x,y)^s}  \,d\mu(x)
		\lesssim \frac{1}{s(1-s)}\frac{1}{(R-\rho)^s}.
		\]
		Inserting this into \eqref{eq2.3-13oct}, we obtain 
		{
			\[
			0\gtrsim   \int_B\int_B \frac{|\eta(x)-\eta(y)| }{\mu(B_{x,y}) d(x,y)^s}  \,d\mu(x)\,d\mu(y) - \frac{1}{s(1-s)}\frac{1}{(R-\rho)^s}\int_{B} (u-k)_+\, d\mu.
			\]
			Notice from the definition of subminimizers and $\mathcal K_{\psi,0}$-solution that $u\in \dot W^{s,1}(\Omega)$, and so $u$ is locally integrable on $\Omega$ by \eqref{eq2.1-11dec}. In particular, $\int_{B} (u-k)_+\, d\mu$ is finite, and hence by adding both sides of the above estimate  by the finite quantity $ \frac{1}{s(1-s)}
			\frac{1}{(R-\rho)^s}
			\int_{B} (u-k)_+\, d\mu$, 
			we obtain that
			\begin{align}
				\|(u-k)_+\phi\|_{\dot W^{s,1}(B(x_0,R))}
				\lesssim
				\frac{1}{s(1-s)}
				\frac{1}{(R-\rho)^s}
				\int_{B(x_0,R)} (u-k)_+\, d\mu.
				\label{eq2.1-19oct}
			\end{align}
			By the estimate  \eqref{eq2-3Lemma2}, it follows from $\phi\equiv 0$ on $X\setminus B(x_0,R)$ that
			\begin{align*}
				&\quad \|(u-k)_+\phi\|_{\dot W^{s,1}(X)}\\
				&= \|(u-k)_+\phi\|_{\dot W^{s,1}(B(x_0,R))} \\
				&\quad + 2 \int_{B(x_0,R)} \left( \int_{X\setminus B(x_0,R)} \frac{(u-k)_+(y)\phi(y)}{\mu(B_{x,y})d(x,y)^s}d\mu(x) \right)d\mu(y)\\
				&\overset{\eqref{eq2.1-19oct} \&  \eqref{eq2-3Lemma2}}{\lesssim} \frac{1}{s(1-s)}\frac{1}{(R-\rho)^s}\int_{B(x_0,R)} (u-k)_+\, d\mu\\
				&\quad + \frac{1}{s(1-s)}\int_{B(x_0,R)}  \frac{(u-k)_+(y)\phi(y)}{{( R- d(y,x_0))^s}}\, d\mu(y)\\
				& {\lesssim} \, \frac{1}{s(1-s)}\frac{1}{(R-\rho)^s}\int_{B(x_0,R)} (u-k)_+\, d\mu
			\end{align*}
			where the last estimate is obtained since $0\leq \phi\leq 1,$  $\phi\equiv  0$ on $ X\setminus B(x_0,r)$, and  $R-d(y,x_0)\geq R-r=\frac{R-\rho}{2}$ for $y\in B(x_0,r)$. The desired claim \eqref{eq3.2-15mar}  follows.
		}
	\end{proof}
	
	\begin{proof}[Proof of Theorem \ref{thm1-20nov}]
		We first consider the case where $\mu$ supports a $1$-Poincar\'e inequality.
		
		Let $r\leq \rho<r_1<r_2\leq R$ such that $r_2\leq 2\rho$ and $r_1=\frac{r_2+\rho}{2}$.
		Let $\phi$ be $\frac{1}{r_1-\rho}$-Lipschitz such that $0\leq \phi\leq 1, \phi\equiv 1$
		on $B({x_0,\rho}),$ $\phi\equiv 0$ on $B(x_0,r_1)^c$. Notice that $\phi$ is
		$\frac{2}{r_2-\rho}$-Lipschitz because $r_1=\frac{r_2+\rho}{2}$.
		Let further $k>l\geq k_0$. Applying \eqref{Poincare-local}
		to $(u-k)_+\phi$, and by Proposition \ref{lem2.2-13oct}, it follows that 
		\begin{align*}
			\left( \dashint_{B(x_0,r_2)} |(u-k)_+ \phi|^{\frac{Q}{Q-s}} \, d\mu \right)^{\frac{Q-s}{Q}} 
			&\overset{\eqref{Poincare-local}}{\lesssim}  \frac{(1-s) r_2^s}{\mu(B(x_0,r_2))}  \|  (u-k)_+ \phi\|_{\dot W^{s,1}(X)}  \\
			&\overset{\eqref{eq3.2-15mar}}{ \lesssim }  \frac{r_2^s}{s(r_2-\rho)^s} \dashint_{B(x_0,r_2)} (u-k)_+\,  d\mu.
		\end{align*}
		This implies that
		\begin{align}
			&\quad \left( \dashint_{B(x_0,r_2)} |(u-k)_+ \phi|^{\frac{Q}{Q-s}} \, d\mu \right)^{\frac{Q-s}{Q}}\notag \\
			&\lesssim  \frac{r_2^s}{s(r_2-\rho)^s} \dashint_{B(x_0,r_2)} (u-k)_+\,  d\mu\notag \\
			&\leq  \frac{r_2^s}{s(r_2-\rho)^s} \dashint_{B(x_0,r_2)} (u-l)_+\,  d\mu\quad 
			\textrm{since\ }k>l.  \label{eq3-3oct}
		\end{align}
		Notice from  $r_2\leq 2\rho$ that  $\mu(B(x_0,\rho))$, $\mu(B(x_0,r_2))$ are comparable.
		We have from the H\"older inequality that
		\begin{align*}
			&\quad \dashint_{B(x_0,\rho)} (u-k)_+\,d\mu\\
			&\lesssim   \dashint_{B(x_0,r_2)} (u-k)_+\phi \, d\mu, \quad \text{since $\phi|_{B(x_0,\rho)}\equiv 1$,}\\
			&\leq \left( \dashint_{B(x_0,r_2)} |(u-k)_+\phi|^{\frac{Q}{Q-s}} \,d\mu\right)^{\frac{Q-s}{Q}} \left( \dashint_{B(x_0,r_2)} {\bf 1}_{\{ u\geq k\}}\, d\mu\right)^{\frac{s}{Q}}\\
			&\leq \left( \dashint_{B(x_0,r_2)} |(u-k)_+\phi|^{\frac{Q}{Q-s}} \,d\mu\right)^{\frac{Q-s}{Q}} \left( \dashint_{B(x_0,r_2)} \frac{(u-l)_+}{k-l}\, d\mu\right)^{\frac{s}{Q}}\\
			&= \frac{1}{(k-l)^{\frac{s}{Q}}}\left( \dashint_{B(x_0,r_2)} |(u-k)_+\phi|^{\frac{Q}{Q-s}} \,d\mu\right)^{\frac{Q-s}{Q}} \left( \dashint_{B(x_0,r_2)} {(u-l)_+}\, d\mu\right)^{\frac{s}{Q}}\\
			&\overset{\eqref{eq3-3oct}}{\lesssim}
			\frac{1}{s}
			\frac{1}{(k-l)^{\frac{s}{Q}}} \frac{r_2^s}{(r_{2}-\rho)^s} \left( \dashint_{B(x_0,r_2)} {(u-l)_+}\, d\mu\right)^{1+\frac{s}{Q}}.
		\end{align*}
		Hence there is a constant $C>0$  depending only  on the doubling and Poincar\'e constants such that 
		\begin{equation}\label{eq4-3oct}
			u(k,\rho)\leq \frac{C}{s}\frac{r_2^s}{(r_2-\rho)^s}\frac{1}{(k-l)^{\frac{s}{Q}}} u(l,r_2)^{1+\frac{s}{Q}}
		\end{equation}
		where
		\[
		u(k,\rho):=\dashint_{B(x_0,\rho)}(u-k)_+\,d\mu.
		\]
		
		For $n=0,1,\ldots,$ let $\rho_n=r+2^{-n}(R-r)\leq R$ and $k_n=k_0+d(1-2^{-n})\geq k_0$ where $d>0$ is defined in \eqref{eq2.8-13oct}. Then
		\[
		\rho_0=R,\, \rho_n\to r,\, k_n\to k_0+d {\rm \ as\ }n\to\infty.
		\]

		We now show by induction that 
		\begin{equation}\label{eq1.5-3oct}
			u(k_n,\rho_n)\leq (2^{-n})^{1+Q} u(k_0,R).
		\end{equation}
		If $n=0$, then \eqref{eq1.5-3oct} is trivial.
		Assume that \eqref{eq1.5-3oct} holds for the index $n$.
		Applying \eqref{eq4-3oct} 
		{to}
		$r_2=\rho_n, \rho=\rho_{n+1}, k=k_{n+1}, l=k_n$,
		we obtain from $k_{n+1}-k_n=2^{-n-1}d$ and $\rho_n-\rho_{n+1}=2^{-n-1}(R-r)$ that
		\begin{align*}
			u(k_{n+1},{\rho_{n+1}})
			&\overset{\eqref{eq4-3oct}}{\leq}  \frac{ C\cdot R^s}{s(\rho_n-\rho_{n+1})^s (k_{n+1}-k_n)^{\frac{s}{Q}}} u(k_{n}, \rho_n)^{1+\frac{s}{Q}}\\
			&\overset{\eqref{eq1.5-3oct}}{\leq } \frac{C\cdot R^s}{s(2^{-n-1} (R-r))^s (2^{-n-1}d)^{\frac{s}{Q}}} (2^{-n})^{(1+Q)\frac{Q+s}{Q}} u(k_0,R)^{\frac{Q+s}{Q}}\\
			&= (2^{-(n+1)})^{1+Q} u(k_0,R),
		\end{align*}
		where 
		\begin{align}
			d:= & \left( \frac{C}{s} \left(\frac{R}{R-r} \right)^s2^{1+s+Q+\frac{s}{Q}} \right)^{\frac{Q}{s}} u(k_0,R). \label{eq2.8-13oct}
		\end{align}
		So \eqref{eq1.5-3oct} is proved with $d$ as above.
		Letting $n\to\infty$, we conclude that $u(k_0+d,r)=0$ and so, for a.e. $x\in B(x_0,r)$,
		\begin{align}
			u(x)-k_0\leq \, d \overset{\eqref{eq2.8-13oct}}{\simeq} \left(\frac{C}{s} \right)^{\frac{Q}{s}} \left(\frac{R}{R-r}\right)^{Q}  \dashint_{B(x_0,R)}(u-k_0)_+\,d\mu \label{eq3.13-10dec},
		\end{align}
		which is the proof for the case where $\mu$ supports a $1$-Poincar\'e inequality.

		In the case where $(X,d,\mu)$ is a complete, connected metric measure space equipped with a doubling measure, the conclusion follows by applying \eqref{Poincare-local-2} with constants depending on $s$.
	\end{proof}
	
	\subsection{Proof of Corollary \ref{cor3.3-16oct}}\
	
	\begin{proof}[Proof of Corollary \ref{cor3.3-16oct}]
		We prove only the lower semicontinuity of \(u^\wedge\);
		the proof of the upper semicontinuity of \(u^\vee\) is similar.
		
		Let $u\in \dot W^{s,1}(\Omega)$ be an $s$-superminimizer in $\Omega$.
		Let $x\in \Omega$ be arbitrary. Notice from \eqref{eq2.1-11dec} that
		$u$ is locally integrable on $\Omega$. Hence the weak Harnack inequality in
		Theorem \ref{thm1} (applied with $k_0=0$ and the $s$-subminimizer $-u$ in $\Omega$) gives that
		$u$ is essentially bounded from below in a neighbourhood of $x$, i.e. there are
		$\beta\in\mathbb R$ and a sufficiently small $R>0$ with $B(x,2R)\Subset\Omega$ such that
		$u\geq \beta$ a.e. in $B(x,R)$. Then $u^\wedge(x)>-\infty$.
		Let $-\infty<t\leq u^\wedge(x)$ be a real number and $\varepsilon>0$.
		By Theorem \ref{thm1} applied with $k_0=0$ and the $s$-subminimizer $(t-u)$ on $\Omega$, we obtain
		that there is a constant $C>0$ depending only on $s$ and the doubling 
		such that for every $0<r<R$,
		\begin{align*}
			&\quad \esssup_{B(x,r/2)}(t-u)\\
			&\leq  C\dashint_{B(x,r)} (t-u)_+\,d\mu\\
			&= \frac{C}{\mu(B(x,r))} \left( \int_{B(x,r)\cap \{ t-\varepsilon
				\leq u<t\}}(t-u)\,d\mu+ \int_{B(x,r)\cap \{ u<t-\varepsilon\}}(t-u)\,d\mu \right)\\
			&\leq  C\cdot \varepsilon + C(t-\beta)_+ \frac{\mu(\{u<t-\varepsilon\}\cap B(x,r)) }{\mu(B(x,r))},
		\end{align*}
		because $u\geq \beta$ a.e. in $B(x,R)$. Since the second term of the right-hand side of the above
		estimate converges to $0$ as $r\to 0^+$ by the definition of $u^\wedge(x)$,
		letting $r\to 0^+$ we obtain that 
		\[
		\lim_{r\to0^+}\esssup_{B(x,r/2)}(t-u) \leq C\cdot \varepsilon.
		\]
		Then there is $r_{t,\varepsilon}>0$ sufficiently small
		that $u(y)\geq t-2C\cdot \varepsilon$  for a.e.
		$y\in B(x,r_{t,\varepsilon}/2)$. It follows that $u^\wedge(y)\geq t-2C\cdot\varepsilon$ for each
		$y\in B(x,r_{t,\varepsilon}/2)$. If $u^\wedge(x)\in\mathbb R$, we pick $t=u^\wedge(x)$ and
		hence $u^\wedge$ is lower semicontinuous at $x$. If $u^\wedge(x)=\infty$, by letting $t\to\infty$,
		we get $u^\wedge(y)\to\infty$ as $y\to x$, and so $u^\wedge$ is again
		lower semicontinuous at $x$.
		Since $x$ is arbitrary in $\Omega$, we obtain the  desired claim.
	\end{proof}
	
	\subsection{Proof of Theorem \ref{weakCartan-main}}\

	We denote the interaction operator between two disjoint measurable sets $E,F\subset X$ by
	\[
	L_s(E,F):=\int_E\int_F  \frac{1}{\mu(B_{x,y})d(x,y)^s}\,d\mu(x)\,d\mu(y).
	\]
	We denote the \emph{$s$-perimeter} of a measurable set $E\subset X$ in an open set $\Omega$ by $$P_{s}(E,\Omega):=L_{s}(E\cap\Omega, \Omega\setminus E)=\frac{1}{2}\|{\bf  1}_E \|_{\dot W^{s,1}(\Omega)}.$$
	When $\Omega=X$, we simply write $P_s(E):=P_s(E,X)$.
	Observe that for two disjoint sets $F_1,F_2\subset X$ with $P_s(F_1)<\infty$ and $P_s(F_2)<\infty$, we have
	\begin{align} 
		\notag		P_s(F_1\cup F_2) & = \int_{F_1\cup F_2}\int_{X\setminus (F_1\cup F_2)}  \frac{1}{\mu(B_{x,y})d(x,y)^s}\,d\mu(x)\,d\mu(y)\\
		\notag & =  \left(\int_{F_1}\int_{X\setminus F_1} +  \int_{F_2}\int_{X\setminus F_2} - 2 \int_{F_1}\int_{F_2} \right) \frac{d\mu(x)\,d\mu(y)}{\mu(B_{x,y})d(x,y)^s}\, \\
		& =P_s(F_1)+P_s(F_2)  -2L_s(F_1,F_2)
		\label{eq:F1 and F2}
	\end{align}
	where the second line is obtained because $F_1, F_2$ are disjoint.
	Moreover, for two measurable sets $F_1, F_2\subset X$ with $P_s(F_1)<\infty$ and $P_s(F_2)<\infty$, we have
	\begin{equation}
		\label{eq3.25-dec}P_s(F_1\setminus F_2)\leq P_s(F_1)+P_s(F_2) <\infty.
	\end{equation}
	
	\begin{lemma}\label{nonlocal}

		Let 
		$x_0\in X$ and $r>0$.
		Then for each $0<c_0<1$, for sufficiently large $k\in\mathbb N$
		depending only on $c_0$, $s$ and the doubling constant we have for every measurable
		$E\subset B(x_0, 2^{-(k-3)}r)$ that
		\begin{align}\label{nonlocalestimate}
			L_{s}(E, X\setminus B(x_0,r))\leq c_0 L_s(E, X\setminus B(x_0,2^{-(k-3)}r)).
		\end{align}
	\end{lemma}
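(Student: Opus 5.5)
Write $\rho:=2^{-(k-3)}r$ and $K(x,y):=\frac{1}{\mu(B_{x,y})d(x,y)^s}$. The plan is to bound the left-hand side of \eqref{nonlocalestimate} pointwise in $y\in E$ from above, and the right-hand side pointwise in $y\in E$ from below. Both bounds should reduce to dyadic annulus sums of the type already used in \eqref{eq2-3Lemma2}. The point is that $E\subset B(x_0,\rho)$ is deep inside $B(x_0,r)$: the far region $X\setminus B(x_0,r)$ is at distance $\gtrsim r$ from every $y\in E$. By contrast, the region $X\setminus B(x_0,\rho)$ contains points at distance comparable to $\rho$, and these contribute a kernel mass of order $\rho^{-s}$. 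Since $\rho^{-s}/r^{-s}=2^{(k-3)s}\to\infty$, the ratio can be made smaller than any $c_0$ by choosing $k$ large.

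\emph{Upper bound.} For $y\in B(x_0,\rho)$ and $\rho\le r/2$ we have $B(y,r/2)\subset B(x_0,r)$. Hence
\[
\int_{X\setminus B(x_0,r)}K(x,y)\,d\mu(x)\le\int_{X\setminus B(y,r/2)}K(x,y)\,d\mu(x)\le\frac{C}{s}\,r^{-s},
\]
by exactly the dyadic computation in \eqref{eq2-3Lemma2}. Here $C$ depends only on $C_\mu$, because $\mu(B_{x,y})\simeq\mu(B(y,2^{j}r))$ on the $j$-th annulus. Integrating over $y\in E$ gives $L_s(E,X\setminus B(x_0,r))\le \frac{C}{s}r^{-s}\mu(E)$.

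\emph{Lower bound (main obstacle).} For $y\in E$ I would use the annulus $A:=B(x_0,4\rho)\setminus B(x_0,2\rho)$, which is contained in $X\setminus B(x_0,\rho)$. Every $x\in A$ satisfies $d(x,y)\le 5\rho$, so $B_{x,y}\subset B(y,10\rho)$ and $\mu(B_{x,y})\le\mu(B(y,10\rho))\lesssim\mu(B(x_0,\rho))$. This gives
\[
\int_{X\setminus B(x_0,\rho)}K(x,y)\,d\mu(x)\gtrsim \frac{\mu(A)}{\mu(B(x_0,\rho))\,\rho^{s}}.
\]
The difficulty is that $\mu(A)$ must be comparable to $\mu(B(x_0,\rho))$. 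I would get this from the reverse-doubling property used in the isoperimetric corollary (\cite[Lemma 3.7]{BB11} together with connectedness): for $4\rho<\frac14\diam X$ it gives $\mu(B(x_0,2\rho))\le\xi\,\mu(B(x_0,4\rho))$, and hence $\mu(A)\ge(1-\xi)\mu(B(x_0,4\rho))\gtrsim\mu(B(x_0,\rho))$. The condition $4\rho<\frac14\diam X$ holds once $k$ is large relative to $r/\diam X$. If $X$ is bounded and $r$ is comparable to $\diam X$ or larger, the constraint depends on $r$, so I would treat that case separately. When $r\gtrsim\diam X$, the set $X\setminus B(x_0,r)$ is empty or very small. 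If it is empty the left-hand side vanishes and there is nothing to prove. Otherwise I would first run the argument with $r$ replaced by a comparable smaller radius $r'\le r$ with $r'<\frac14\diam X$, using monotonicity of the left-hand side in $r$ and adjusting $k$ by a fixed amount. Alternatively I would replace the annulus argument by a connectedness-based lower bound on $\mu(B(x_0,r)\setminus B(x_0,\rho))$. Integrating over $y\in E$ then gives $L_s(E,X\setminus B(x_0,\rho))\ge c\,\rho^{-s}\mu(E)$.

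\emph{Conclusion.} Combining the two bounds,
\[
L_s(E,X\setminus B(x_0,r))\le \frac{C}{cs}\Big(\frac{\rho}{r}\Big)^{s}L_s(E,X\setminus B(x_0,\rho))=\frac{C}{cs}\,2^{-(k-3)s}L_s(E,X\setminus B(x_0,\rho)).
\]
It then suffices to choose $k$ with $\frac{C}{cs}2^{-(k-3)s}\le c_0$. This choice depends only on $c_0$, $s$ and $C_\mu$, as required.
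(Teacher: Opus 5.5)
Your proof is correct and is essentially the paper's argument. Both proofs compare, pointwise in $y\in E$, the far-field bound $\int_{X\setminus B(x_0,r)}\frac{d\mu(x)}{\mu(B_{x,y})d(x,y)^s}\lesssim\frac{1}{s\,r^{s}}$ from \eqref{eq2-3Lemma2} with the kernel mass of a single dyadic annulus lying outside $B(x_0,\rho)$. The paper centers that annulus at $y$ at scale $2^{-j_0}r$ and takes $k=100j_0$; you center it at $x_0$ at scale $\rho=2^{-(k-3)}r$, which gives the explicit factor $2^{-(k-3)s}$ and spells out the reverse-doubling step that the paper leaves implicit.

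Your case split on $r$ versus $\diam X$ is unnecessary. If $r>\diam X$, then $B(x_0,r)=X$ and the left-hand side vanishes. If $r\le\diam X$, then $4\rho\le 2^{-(k-5)}\diam X<\frac14\diam X$ for every $k\ge 8$, so the reverse-doubling condition holds with no dependence on $r$.
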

	\begin{proof}
		Since  $r-d(x_0,y)\geq r/2$ for $y\in B(x_0,r/2)$, for such $y$ we have that
		\begin{align*}
			\int_{X\setminus B(x_0,r)} \frac{1}{\mu(B_{x,y})d(x,y)^s}\,d\mu(x)
			\overset{\eqref{eq2-3Lemma2}}{\lesssim}\,  \frac{1}{s} \frac{1}{(r-d(x_0,y))^s}  
			\leq \,  \frac{2^s}{s\cdot r^s}.
		\end{align*}
		We denote $A_j(y):=B(y,2^{-j+1}r)\setminus B(y,2^{-j} r)$ for $j\in \mathbb N$. Since
		\[
		\lim_{j\to \infty}\int_{A_j(y)} \frac{1}{\mu(B_{x,y})d(x,y)^s}\,d\mu(x)=\infty,
		\]
		it follows from the above estimates that there is  $j_0\in\mathbb N$ depending only  on $s,c_0$ and the doubling constant such that for every $y\in B(x_0,r/2)$,
		\[
		\int_{X\setminus B(x_0,r)} \frac{1}{\mu(B_{x,y})d(x,y)^s}\,d\mu(x)
		\le c_0\int_{A_{j_0}(y)}\frac{1}{\mu(B_{x,y})d(x,y)^s}\,d\mu(x).
		\]
		Let $k:=100\cdot j_0$. Let $y\in B(x_0, 2^{-(k-3)}r)$ be arbitrary.
		Then the condition $k=100\cdot j_0$ and the definition of $A_{j_0}(y)$ give that for each  $y\in B(x_0,2^{-(k-3)}r)$,
		\[
		A_{j_0}(y)\subset X\setminus B(x_0, 2^{-(k-3)}r).
		\]
		Combining this with the above inequality, we obtain that 
		\[
		\int_{X\setminus B(x_0,r)} \frac{1} {\mu(B_{x,y})d(x,y)^s}\,d\mu(x)
		\leq c_0 \int_{X\setminus B(x_0, 2^{-(k-3)}r)} \frac{1}{\mu(B_{x,y})d(x,y)^s}\,d\mu(x).
		\]
		In particular, this holds for every $y\in E$ since $E\subset B(x_0, 2^{-(k-3)}r)$, which implies  \eqref{nonlocalestimate} by integrating both sides of the above inequality over $y\in E$.
	\end{proof}

	\begin{lemma}\label{lem3.15-dec}
		For almost every $r>0$, for $z\in X$,
		$
		{P_s(B(z,r))<\infty} .
		$
	\end{lemma}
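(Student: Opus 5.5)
Fix $z\in X$ and work with $P_s(B(z,r))=L_s(B(z,r),X\setminus B(z,r))$. The plan is to integrate in $r$ and use Tonelli: the claim follows once
\[
\int_a^b P_s(B(z,r))\,dr<\infty\quad\text{for all }0<a<b<\infty,
\]
since then $P_s(B(z,r))<\infty$ for a.e. $r\in(a,b)$. Taking $a=1/m$, $b=m$ and a countable union of exceptional null sets gives the statement for a.e. $r>0$. (For each fixed $z$ the exceptional set depends on $z$; that is how the statement should be read.)

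By Tonelli,
\[
\int_a^b P_s(B(z,r))\,dr=\int_X\int_X \frac{|\{r\in(a,b): d(z,y)<r\le d(z,x)\}|}{\mu(B_{x,y})d(x,y)^s}\,d\mu(x)\,d\mu(y).
\]
The inner length is at most $\min\{d(z,x)-d(z,y),\,b-a\}\le\min\{d(x,y),b-a\}$, and it vanishes unless $y\in B(z,b)$. So the integral is bounded by
\[
\int_{B(z,b)}\int_X \frac{\min\{d(x,y),b-a\}}{\mu(B_{x,y})d(x,y)^s}\,d\mu(x)\,d\mu(y).
\]

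For fixed $y$, split the $x$-integral dyadically into annuli $B(y,2^{j+1}\delta)\setminus B(y,2^j\delta)$ with $\delta=b-a$, exactly as in \eqref{eq2.6-29oct}.
\begin{itemize}
\item For $j\le 0$ the integrand is at most $d(x,y)^{1-s}/\mu(B_{x,y})$. Using doubling, $\mu(B_{x,y})\simeq\mu(B(y,2^j\delta))$ on the annulus, so this part contributes $\lesssim\sum_{j\le0}(2^j\delta)^{1-s}\lesssim_s\delta^{1-s}$.
\item For $j\ge1$ the integrand is at most $\delta\, d(x,y)^{-s}/\mu(B_{x,y})$, giving $\lesssim\delta\sum_{j\ge1}(2^j\delta)^{-s}\lesssim_s\delta^{1-s}$.
\end{itemize}
Hence the inner integral is bounded by $C(s,C_\mu)\,\delta^{1-s}$ uniformly in $y$. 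Integrating over $y\in B(z,b)$ gives a bound $C\delta^{1-s}\mu(B(z,b))<\infty$.

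The only real point is the one-dimensional observation that the $r$-measure of the set of radii separating $x$ and $y$ is at most $d(x,y)$. This observation is what converts the kernel singularity $d^{-s}$ into the integrable $d^{1-s}$; the rest is routine doubling-annulus estimates already carried out in the paper.
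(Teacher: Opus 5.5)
Your argument is correct and is essentially the paper's proof with the coarea formula \eqref{eq2.7-27nov} unpacked. The paper applies coarea to the truncated distance $f=\min\{d(z,\cdot),R\}$ and bounds $\|f\|_{\dot W^{s,1}(X)}$ via the annulus estimate \eqref{eq2.6-29oct}; your bound $\min\{d(x,y),b-a\}$ on the length of the set of separating radii plays exactly the role of $|f(x)-f(y)|$, which also makes your version self-contained and gives the explicit bound $\int_a^b P_s(B(z,r))\,dr\lesssim_s (b-a)^{1-s}\mu(B(z,b))$.
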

	\begin{proof}
		Let $R>0$ be arbitrary. Let $f(x)$ be a $1$-Lipschitz function defined by $f(x):=d(z,x)$ if $d(z,x)\leq R$, and $f(x)=R$ if $d(z,r)\geq R$.
		First, we have from $f\equiv R$ on $X\setminus B(z,R)$ that
		\begin{align*}
			\|f\|_{\dot W^{s,1}(X)}&\leq 2 \int_{B(z,R)} \int_{X} \frac{|f(x)-f(y)|}{\mu(B_{x,y})d(x,y)^s}d\mu(x)d\mu(y)\\
			&\overset{\eqref{eq2.6-29oct}}\lesssim \int_{B(z,R)} \frac{1}{s(1-s)}  d\mu(x) <\infty
		\end{align*}
		because $f$ is $1$-Lipschitz and bounded.  By coarea formula \eqref{eq2.7-27nov}, this gives that 
		\begin{align*}
			\int_{-\infty}^\infty  \| {\bf 1}_{\{f<t\}} \|_{\dot W^{s,1}(X)} dt
			&=\int_{-\infty}^\infty \| {\bf 1}_{\{-f>-t\}} \|_{\dot W^{s,1}(X)} dt\\
			&\overset{\eqref{eq2.7-27nov}}{=}\|-f\|_{\dot W^{s,1}(X)}=\|f\|_{\dot W^{s,1}(X)}<\infty.
		\end{align*}
		Notice that $B(z,r)=\{ f<r\}$ for $0<r\leq R$. Hence it follows from Fubini's theorem that for almost every $R\geq r>0$, $${P_s(B(z,r))=\frac{1}{2} \| {\bf 1}_{B(z,r)}\|_{\dot W^{s,1}(X)}<\infty}.$$
		Since $R>0$ is arbitrary, this gives the desired claim.
	\end{proof}
	
	\begin{proposition}\label{empty-intersection}

		Let 
		$3<k\in\mathbb N$ be the constant from Lemma \ref{nonlocal} applied with $c_0=\frac{1}{4}$.
		Let $B':=B(x,R)$ be an open ball. Suppose that each $B_j:=B(x,2^{-j}R)$, where $j\in\mathbb N\cup\{0\}$, satisfies $P_s(B_j)<\infty$.
		Let $W'\subset X$ be such that 
		\begin{equation*}
			\sup_{0<t\le R} \frac{\caps(W'\cap B(x,t),B(x,2t))}{\caps(B(x,t),B(x,2t))}\le \frac{1}{2C_4},
		\end{equation*}
		where $C_4:=2^{k} C_1C_2C_3C_{\mu}^{k+1}$,
		and
		\begin{equation}\label{eq3.14-23Jan}
			W'\cap \left(\frac{9}{20}B_{lk}\setminus \frac{1}{2^k}B_{lk}\right)=\emptyset 
			\quad \text{for every $l\in \mathbb N\cup \{0\}$}.
		\end{equation}
		Here $C_1,C_2,C_3$ are
		the constants from \eqref{isoperimetric-set}, \eqref{eq4.1-28oct}, and
		Theorem \ref{thm1-20nov} respectively.
		
		Then for every  $\mathcal{K}_{W'\cap B',0}(2^{k-1}B')$-solution $E$, after modifying $E$
		in a set of zero measure, we have
		\begin{equation}\label{eq-empty-intersection}
			E\cap \big(\frac{1}{4}B_{lk}\setminus \frac{1}{2^{k-1}}B_{lk}\big)=\emptyset\quad \text{for every $l\in \mathbb N\cup\{0\}$.}
		\end{equation}
		Furthermore, for each $l\in\mathbb N\cup\{0\}$, 
		\begin{equation}
			P_s(E\cap B_{lk+k-1}) \le
			\caps(W'\cap B_{lk}, 2B_{lk}).
			\label{new-2feb}
		\end{equation}
	\end{proposition}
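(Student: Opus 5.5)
The plan is an induction on $l\in\mathbb N\cup\{0\}$. At each level I first prove the emptiness \eqref{eq-empty-intersection} on the annulus $\frac14 B_{lk}\setminus \frac1{2^{k-1}}B_{lk}$, using the weak Harnack inequality for obstacle solutions. I then use this emptiness to split $E$ and run a comparison argument giving \eqref{new-2feb}. That bound in turn gives the measure smallness needed at the next level. Throughout, write $\rho_l:=2^{-lk}R$ for the radius of $B_{lk}$, $\Omega:=2^{k-1}B'$ and $\psi:={\bf 1}_{W'\cap B'}$. By Lemma \ref{lemma4.6} (with Proposition \ref{prop2.4-22nov}), $E$ may be taken with $W'\cap B'\subset E\subset \Omega$, and ${\bf 1}_E$ is a $\mathcal K_{\psi,0}(\Omega)$-solution.

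\emph{Emptiness at level $l$.} Fix $y\in \frac14B_{lk}\setminus \frac1{2^{k-1}}B_{lk}$ and set $R_y:=2^{-k}\rho_l$. Then $B(y,R_y)\subset \frac9{20}B_{lk}\setminus \frac1{2^k}B_{lk}$: the inner gap is at least $2^{-k}\rho_l$, and the outer gap is at least $(\frac9{20}-\frac14)\rho_l=\rho_l/5$. By \eqref{eq3.14-23Jan}, $\psi=0\le k_0:=0$ on $B(y,R_y)$. Theorem \ref{thm1-20nov}\ref{Thm1.1-item-b} with $r=R_y/2$ gives
\[
\esssup_{B(y,R_y/2)}{\bf 1}_E\le C_3 2^Q\frac{\mu(E\cap B(y,R_y))}{\mu(B(y,R_y))}.
\]
Since ${\bf 1}_E$ takes only the values $0$ and $1$, it suffices to make the right-hand side $<1$. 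Covering the annulus by such balls then gives \eqref{eq-empty-intersection} after modifying $E$ on a null set. By doubling, $\mu(B(y,R_y))\ge C_\mu^{-(k+1)}\mu(B_{lk})$, so the task is to show $\mu(E\cap B_{lk})\le \mu(B_{lk})/(2^{k}C_3C_\mu^{k+1}\cdot\text{const})$.

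For $l=0$ I use the isoperimetric inequality \eqref{isoperimetric-set}, then Lemma \ref{lemma4.6}, \eqref{e4.2-29oct}, the hypothesis on $W'$ with $t=R$, and \eqref{eq4.1-28oct}:
\[
\mu(E)\le C_1 (2^{k-1}R)^s\,2P_s(E)\lesssim C_12^{k}R^s\caps(W'\cap B',2B')\le \frac{C_12^kR^s}{2C_4}\caps(B',2B')\le \frac{C_1C_22^k}{2C_4}\mu(B').
\]
This is exactly where the definition of $C_4=2^kC_1C_2C_3C_\mu^{k+1}$ is calibrated. For $l\ge1$ I run the same chain with $E$ replaced by $E\cap B_{(l-1)k+k-1}$. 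This set contains $E\cap B_{lk}$ because $E$ is empty on the annulus at level $l-1$. Its perimeter is controlled by \eqref{new-2feb} at level $l-1$, and $W'\cap B(x,t)$ is controlled at the appropriate scale $t\simeq\rho_l$. To avoid losing a factor $C_\mu^k$ from the scale mismatch $\rho_{l-1}$ versus $\rho_l$, I will instead compare with a capacitary set for $W'\cap B_{lk}$ directly in the comparison below. The constants may need slight reshuffling, but the mechanism is the same.

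\emph{The perimeter estimate \eqref{new-2feb}.} Given emptiness at level $l$, split $E=E_{\rm in}\cup E_{\rm out}$ with $E_{\rm in}:=E\cap B_{lk+k-1}$ and $E_{\rm out}\subset X\setminus\frac14B_{lk}$. Lemma \ref{lem3.15-dec} and \eqref{eq3.25-dec} make all perimeters involved finite. Let $G$ be the set from Lemma \ref{lemma4.6} for $W'\cap B_{lk}$ in $2B_{lk}$, so $P_s(G)\le \frac12\caps(W'\cap B_{lk},2B_{lk})$. Then $E_{\rm out}\cup G$ contains $W'\cap B'$ (the obstacle inside $B_{lk+k-1}$ is covered by $G$) and lies in $\Omega$, so minimality gives $P_s(E)\le P_s(E_{\rm out}\cup G)\le P_s(E_{\rm out})+P_s(G)$. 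On the other hand, \eqref{eq:F1 and F2} gives $P_s(E)=P_s(E_{\rm in})+P_s(E_{\rm out})-2L_s(E_{\rm in},E_{\rm out})$. Lemma \ref{nonlocal} with $c_0=\frac14$, applied with $r=\rho_l/4$-type scaling so that $E_{\rm in}\subset B(x,2^{-(k-3)}\cdot\rho_l/4)$, gives
\[
L_s(E_{\rm in},E_{\rm out})\le L_s(E_{\rm in},X\setminus\tfrac14B_{lk})\le \tfrac14 L_s(E_{\rm in},X\setminus E_{\rm in})=\tfrac14P_s(E_{\rm in}).
\]
Hence $\frac12P_s(E_{\rm in})\le P_s(G)\le\frac12\caps(W'\cap B_{lk},2B_{lk})$, which is \eqref{new-2feb}.

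The main obstacle is the bookkeeping of scales and constants. The inclusion $E_{\rm in}\subset B(x,2^{-(k-3)}r)$ has to match the annulus where $E$ was shown to be empty, with the correct powers of $2$. The measure-smallness input at level $l+1$ has to come out with exactly the constant $C_4$, using the hypothesis on $W'$ at the single scale $t\simeq\rho_{l+1}$ together with \eqref{isoperimetric-set} and \eqref{eq4.1-28oct}. I expect to fix the radii in Lemma \ref{nonlocal} first and then check that the Harnack step closes with the given $C_4$.
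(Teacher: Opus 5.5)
Your proposal follows the paper's proof essentially step for step. It runs an induction on $l$. It gets emptiness of each annulus from Theorem~\ref{thm1-20nov}\ref{Thm1.1-item-b} with $k_0=0$, applied on balls avoiding $W'$, combined with \eqref{isoperimetric-set}. It gets \eqref{new-2feb} by comparing $E$ with the union of its outer part and a capacitary solution for $W'\cap B_{lk}$, using \eqref{eq:F1 and F2} and Lemma~\ref{nonlocal} with $c_0=\frac14$ to absorb the interaction term.

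The scale mismatch you flag is genuine. In the paper's inductive Harnack step, the inclusion $B_{(l+1)k}\subset B(y,2^{k+1}r)$ only bounds $\mu(B_{(l+1)k})$, not $\mu(B_{lk})$, so a factor $C_\mu^{k}$ is lost there without comment. Your remedy is legitimate: split $E$ across the level-$(l-1)$ annulus, but compare with a capacitary set for $W'\cap B_{lk}$. This comparison set is admissible because \eqref{eq3.14-23Jan} forces $W'\cap\frac14 B_{(l-1)k}\subset B_{lk}$. It yields $P_s(E\cap B_{lk-1})\le\caps(W'\cap B_{lk},2B_{lk})$, which closes with the stated $C_4$. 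This holds provided the factor $2^Q$ from the Harnack radius ratio is absorbed into $C_3$, as the paper also does implicitly.
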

	\begin{proof}

		We prove this lemma by induction in  $l\in\mathbb N\cup\{0\}$. Set $F_{lk}:=\frac{1}{4}B_{lk}\setminus\frac{1}{2^{k-1}}B_{lk}$.
		For $l=0$, since $E$ is a $\mathcal{K}_{W'\cap B',0}(2^{k-1}B')$-solution, we have ${\bf 1}_E=0$ a.e. in $X\setminus (2^{k-1}B')$. 
		By the isoperimetric inequality \eqref{isoperimetric-set},
		we have
		\begin{align}
			\notag	\mu(E)
			&\overset{\eqref{isoperimetric-set}}{\le}C_1 (2^{k-1}R)^s \|{\bf 1}_E\|_{\dot W^{s,1}(X)}\\
			&\overset{\eqref{eq2.7-19nov}}{\le} 2^{(k-1)s}C_1 R^s \caps(W'\cap B',2^{k-1}B') \notag \\
			&\overset{\eqref{e4.2-29oct}}{\leq}2^{(k-1)s}C_1 R^s \caps(W'\cap B',2B').
			\label{eq3.16-23Jan}
		\end{align}
		Let 
		{$y\in F_0:=\frac{1}{4}B_0\setminus \frac{1}{2^{k-1}}B_0$}
		be arbitrary. Set $r:={R}/{2^k}$. Then
		\begin{equation}\label{eq3.17-23Jan}B(y,r)\subset \frac{9}{20}B'\setminus \frac{1}{2^k}B'.
		\end{equation}
		Since 
		{$0\geq {\bf 1}_{W'\cap B'}$}
		on $B(y,r)$ by \eqref{eq3.14-23Jan} and \eqref{eq3.17-23Jan}, applying Theorem \ref{thm1-20nov}\ref{Thm1.1-item-b} with $k_0=0$, we obtain that
		\begin{align*}
			\esssup_{B(y,r/2)}{\bf 1}_E
			&\le C_3  \dashint_{B(y,r)}{\bf 1}_E\,d\mu\\
			&\le   \frac{C_3\mu(E)}{\mu(B(y,r))}\\
			&\le C_3C_{\mu}^{k+1} \frac{\mu(E)}{\mu(B')}, 
			\quad \text{ since $B(y,r)\subset B(x,R)\subset 2^{k+1}B(y,r)$, }
			\\
			&\overset{\eqref{eq3.16-23Jan}}{\le}2^{(k-1)s} C_1C_3C_{\mu}^{k+1} \frac{R^s}{\mu(B')}\caps(W'\cap B',2B')\\
			&\overset{\eqref{eq4.1-28oct}}{\le }2^{(k-1)s} C_1C_2C_3C_{\mu}^{k+1} 
			\frac{\caps(W'\cap B',2B')}{\caps(B',2B')}.
		\end{align*}
		For every $y\in F_0$, it follows that
		\begin{equation*}
			{\bf 1}_{E}^\vee(y)
			\le C_4
			\frac{\caps(W'\cap B',2B')}{\caps(B',2B')}\le\frac{1}{2},
		\end{equation*}
		which implies by Lebesgue's differentiation theorem that $E\cap F_0=\emptyset$
		after modifying $E$ in a set of zero measure.
		
		Suppose that the claim \eqref{eq-empty-intersection} holds for
		$l\in\mathbb N\cup\{0\}$. It suffices to prove that it holds for $l+1$.
		
		We have $E\cap (\frac{1}{4}B_{lk}\setminus \frac{1}{2^{k-1}}B_{lk})=\emptyset$. 
		Let $E_{lk}$ be a $\mathcal{K}_{W'\cap B_{lk}}(2^{k-1}B_{lk}) $-solution.
		Note that the set $E_{lk}\cup (E\setminus B_{lk+k-1})$ is admissible for $\mathcal{K}_{W'\cap B',0}(2^{k-1}B')$-obstacle problem. Then
		\begin{align*}
			&P_s(E\cap B_{lk+k-1})+P_s(E\setminus B_{lk+k-1})-2L_s(E\cap B_{lk+k-1},E\setminus B_{lk+k-1})\\
			&\overset{\eqref{eq:F1 and F2}}{=} P_s(E)\\
			&\le P_s(E_{lk}\cup (E\setminus B_{lk+k-1}))\\
			&\overset{\eqref{eq:F1 and F2}}{\le } P_s(E_{lk})+P_s(E\setminus B_{lk+k-1}).
		\end{align*} 
		Notice from \eqref{eq4.1-28oct} and Lemma \ref{lemma4.6}
		that $P_s(E)<\infty$.
		By $\eqref{eq3.25-dec}$ and since $P_s(B_j)<\infty$ for each $j\in\mathbb N\cup \{ 0\}$ by the assumption, we have that $P_s(E\setminus B_{lk+k-1})\le P_s(E)+P_s(B_{lk+k-1})<\infty$, and thus we can subtract above to get
		\begin{equation}\label{weak-cartan-1}
			P_s(E\cap B_{lk+k-1})\le P_s(E_{lk})+2L_s(E\cap B_{lk+k-1}, E\setminus B_{lk+k-1}).
		\end{equation}
		On the other hand, by Lemma \ref{nonlocal} applied with $c_0=\frac{1}{4}$, we have that
		\begin{align*}
			&\quad L_s(E\cap B_{lk+k-1}, E\setminus B_{lk+k-1})\\
			&=L_s(E\cap B_{lk+k-1}, E\setminus B_{lk+2}),\quad \text{since }E\cap F_{lk}=\emptyset,\\
			&\le L_s(E\cap B_{lk+k-1}, X\setminus B_{lk+2})\\
			& \overset{\eqref{nonlocalestimate}}{\le }
			\frac{1}{4} L_s(E\cap B_{lk+k-1}, X\setminus B_{lk+k-1})\\
			&\le \frac{1}{4}P_s(E\cap B_{lk+k-1}).
		\end{align*}
		Combining this with $\eqref{weak-cartan-1}$ and recalling that $P_s(E_{lk})=\frac{1}{2}\|{\bf1}_{E_{lk}} \|_{\dot W^{s,1}(X)}$, we have that 
		\begin{align}
			P_s(E\cap B_{lk+k-1})
			&\le 2P_s(E_{lk})\notag\\
			&\le \caps(W'\cap B_{lk}, 2^{k-1}B_{lk}),\quad \text{by Lemma \ref{lemma4.6}},\notag\\
			&\overset{\eqref{e4.2-29oct}}{\le}\caps(W'\cap B_{lk}, 2B_{lk})
			\label{eq3.24-24Jan},
		\end{align}
		which implies that  the last claim \eqref{new-2feb} holds.
		Let $y\in F_{(l+1)k}=\frac{1}{4}B_{(l+1)k}\setminus \frac{1}{2^{k-1}}B_{(l+1)k}$ be arbitrary.
		Set $r:=2^{-(l+2)k}R $. Then 
		\begin{equation}\label{eq3.25-24Jan}
			B(y,r)\subset \frac{9}{20} B_{(l+1)k}\setminus \frac{1}{2^k}B_{(l+1)k}.
		\end{equation}
		By the isoperimetric inequality \eqref{isoperimetric-set} and the fact that $E\cap F_{lk}=\emptyset$, we obtain that
		\begin{align}
			\mu(E\cap \tfrac{1}{4}B_{lk})
			&=\mu(E\cap B_{lk+k-1})\notag\\
			&\le C_1\left(2^{-(lk+k-1)}R\right)^s \|{\bf 1}_{E\cap B_{lk+k-1}}\|_{\dot W^{s,1}(X)}\notag\\
			&\overset{\eqref{eq3.24-24Jan}}{\le }2C_1\left(2^{-(lk+k-1)}R\right)^s\caps(W'\cap B_{lk}, 2B_{lk}).
			\label{eq3.26-24Jan}
		\end{align}
		Since 
		{$0\geq {\bf 1}_{W'\cap B'}$}
		on $B(y,r)$ by \eqref{eq3.14-23Jan} and \eqref{eq3.25-24Jan}, applying Theorem \ref{thm1-20nov}\ref{Thm1.1-item-b} with $k_0=0$, we obtain that
		\begin{align*}
			\esssup_{B(y,r/2)}{\bf 1}_E
			&\le C_3  \dashint_{B(y,r)}{\bf 1}_E\,d\mu\\
			&\le C_3\frac{\mu(E\cap \frac{1}{4}B_{lk})}{\mu(B(y,r))},\quad \text{since }B(y,r)\subset \frac{1}{4}B_{lk},\\
			&\le C_3C_{\mu}^{k+1}\frac{\mu(E\cap \frac{1}{4}B_{lk})}{\mu(B_{lk})},\quad \text{since }
			B(y, r)\subset B_{(l+1)k}\subset B(y, 2^{k+1}r),\\
			&\overset{\eqref{eq3.26-24Jan}}{\le}2C_1C_3C_{\mu}^{k+1}\frac{\left(2^{-(lk+k-1)}R \right)^s}{\mu(B_{lk})}\caps(W'\cap B_{lk}, 2B_{lk})\\
			&\overset{\eqref{eq4.1-28oct}}{\le }2C_1C_2C_3C_{\mu}^{k+1} \frac{\caps(W'\cap B_{lk},2B_{lk})}{\caps(B_{lk},2B_{lk})}.
		\end{align*}
		
		For each $y\in F_{(l+1)k}$, it follows that
		\[
		{\bf 1}_{E}^\vee(y)
		\le C_4\frac{\caps(W'\cap B_{lk},2B_{lk})}{{\rm cap}_{s,1}(B_{lk},2B_{lk})}\le\frac{1}{2}.
		\]
		
		By the Lebesgue's differentiation theorem, the claim holds for $l+1$ after modifying $E$ on a set of zero measure, which completes the proof.
	\end{proof}
	
	\begin{proof}[Proof of Theorem \ref{weakCartan-main}]
		Let $3< k\in\mathbb N$ be the constant from Lemma \ref{nonlocal} applied with $c_0=\frac{1}{4}$. 
		By Lemma \ref{openset}, there exists an open set $W\supset A$ that is $s$-thin at $x$.
		Then fix $R>0$ such that
		\begin{equation}\label{eq3.27-22nov}
			\sup_{0<t\leq R} \frac{\caps(W\cap B(x,t), B(x,2t))}{\caps(B(x,t),B(x,2t))}\leq \frac{1}{2C_4}
		\end{equation}
		where $C_4$ is the constant from Proposition \ref{empty-intersection}. By Lemma \ref{lem3.15-dec}, we may assume that  $P_s(B(x, 2^{-i}R))<\infty$ for each $i=0,1,\ldots$.
		Let $B_i:=B(x,2^{-i}R)$ and  $H_i:=B_i\setminus \frac{9}{10} \overline{B_{i+1}}$ where $i=0,1,\ldots$. For each $i=0,1,\ldots$, we set
		\[
		D_i:= \bigcup_{l=0}^\infty H_{i+lk}.
		\]
		Then $\bigcup_{i=0}^{k-1}D_i=B(x,R)$. Let $W_i:=W\cap D_i$ where $i=0,1,\ldots$,
		and so each $W_i$ is open.
		Moreover,
		\begin{equation}\label{eq3.28--22nov}
			\bigcup_{i=0}^{k-1}W_i=W\cap B(x,R)\supset A\cap B(x,R).
		\end{equation}
		By Lemma \ref{lemma4.6} applied to $W_i$ with $i\in \mathbb N\cup\{0\}$, there exists a $\mathcal{K}_{W_i,0}(2^{k-1}B_i)$-solution $E_i$  such that  $W_i\subset E_i\subset 2^{k-1}B_i$ and 
		\begin{align}
			\|{\bf 1}_{E_i}\|_{\dot W^{s,1}(X)}\leq\,  {\rm cap}_{s,1}(W_i,2^{k-1}B_i) 
			\overset{\eqref{e4.2-29oct}}{\leq} \ {\rm cap}_{s,1}(W_i,2B_i) \label{eq5.20-19nov}.
		\end{align}
		We will show that the sets $E_j$, with $j=0,\ldots, k-1$, satisfy our desired claims.

		Each $E_j$ is an $s$-superminimizer 
		{in $2^{k-1}B_j$, }
		for $j=0,\ldots, k-1$,
		by Proposition~\ref{prop2.4-22nov}, and hence an $s$-superminimizer 
		{in  $B$, }
		since $B \subset 2^{k-1}B_j$.
		Since each $W_j$ is open, we have ${\bf 1}_{E_j}^\wedge=1$ on $W_j$, and  hence \eqref{eq3.28--22nov} gives that 
		\[
		\max\{ {\bf 1}_{E_0}^\wedge, {\bf 1}_{E_1}^\wedge,\ldots,{\bf 1}_{E_{k-1}}^\wedge\} \equiv 1
		\]
		on $A\cap B(x,R)$.
		Hence, \ref{a-item-}-\ref{c-item-} hold.
		
		Let $F_{i}:=\frac{1}{4}B_{i}\setminus \frac{1}{2^{k-1}}B_{i}=B_{i+2}\setminus B_{i+k-1}$, where $i\in\mathbb N\cup\{0\}$.  Applying Proposition \ref{empty-intersection} with the choices
		$W'=W_i$ and $B'=B_i$,
		we may assume by possibly modifying $E_i$ on a set of zero measure that for each  $i,l\in\mathbb N\cup\{0\}$,
		\begin{align}\label{eq5.21-19nov}
			E_i\cap F_{i+lk}=\emptyset,
		\end{align}
		and
		\begin{align}
			P_s(E_i\cap B_{i+lk+k-1}) 
			&\overset{\eqref{new-2feb}}{\le} 
			\caps(W_i\cap B_{i+lk}, 2B_{i+lk})\notag\\
			&\le \caps(W\cap B_{i+lk}, 2B_{i+lk}).
			\label{eq3.32-24Jan}
		\end{align}
		
		Fix $0<\delta <1$.
		Since $W$ is $s$-thin at $x$, there exists a sufficiently large $i_\delta\in \mathbb N$ such that for all $i\in\mathbb N\cup\{0\}$ with $i\geq i_\delta$,
		\begin{align}\label{weakCartan-3}
			\frac{\caps(W\cap B_i,2B_i)}{\caps(B_i,2B_i)}\leq \delta.   
		\end{align}
		Combining this with the isoperimetric inequality \eqref{isoperimetric-set}, we have that for $j=0,\ldots,k-1$ and $l\in\mathbb N\cup\{0\}$ with $j+lk> i_{\delta}$,
		\begin{align*}
			\mu(E_j\cap 2B_{j+lk+k-1})&\overset{\eqref{eq5.21-19nov}}{=}\mu(E_j\cap B_{j+lk+k-1})\\
			&\overset{\eqref{isoperimetric-set}}{\lesssim_s} (2^{-j-lk-k+1}R)^s\|{\bf 1}_{E_j\cap B_{j+lk+k-1}}\|_{\dot W^{s,1}(X)}\\
			&\overset{\eqref{eq3.32-24Jan}}{\lesssim_k}  (2^{-j-lk}R)^s \caps(W\cap B_{j+lk},2B_{j+lk})\\
			&\overset{\eqref{weakCartan-3}}{\leq} \delta  (2^{-j-lk}R)^s \caps(B_{j+lk},2B_{j+lk})\\
			&\overset{\eqref{eq4.1-28oct}}{\simeq_s}  \delta \mu(B_{j+lk})\\
			&\simeq_k \delta\mu(2B_{j+lk+k-1}).
		\end{align*}
		Since $k$ depends only on $s$ and the doubling constant, the above estimate yields
		\begin{equation}\label{eq3.31-22dec}
			\frac{\mu(E_j\cap 2B_{j+lk+k-1})}{\mu(2B_{j+lk+k-1})}\le C_5\delta,
		\end{equation}
		where $C_5>0$ depends only on $s$ and the doubling constant.
		We now choose $\delta_0:=\left(2C_5C_{\mu}^{\lceil \log_2(50) \rceil} \right)^{-1}$.  
		Then for all $0<\delta<\delta_0$ and all $l\in \mathbb N\cup\{0\}$ with $j+lk>i_\delta$,
		\begin{align}
			\frac{\mu(E_j\cap 2B_{j+lk+k-1})}{\mu(2B_{j+lk+k-1})}
			\overset{\eqref{eq3.31-22dec}}{\le} \frac{1}{2C_\mu^{\lceil \log_2(50)\rceil}}.\label{weakCartan-4}
		\end{align}
		{By \eqref{weakCartan-4},}
		we conclude that ${\bf 1}_{E_j}^\vee(x)=0$ for each $j=0,\ldots,k-1$, which proves \ref{d-item-}.
		Recall that $I_{E_0}$ and $\partial^*E_0$ are defined  as in \eqref{eq2.19-2011}-\eqref{eq2.20-2011}.
		Notice that for $j=0,\ldots,k-1$, $l\in\mathbb N\cup\{0\}$,
		\begin{align*}
			&\quad \{{\bf 1}_{E_j}^\vee=1\}\cap B_{j+lk+k-1}\\
			&= (I_{E_j}\cup \partial^*E_j)\cap B_{j+lk+k-1}\\
			&= \left(I_{E_j\cap B_{j+lk+k-1}}\cup \partial^*(E_j\cap B_{j+lk+k-1}) \right)\cap B_{j+lk+k-1}.
		\end{align*}
		Combining this with Lemma $\ref{lemma4.11-Nov.12}$, which is applicable due to
		\eqref{weakCartan-4}, we get that for $j=0,\ldots,k-1$, $l\in\mathbb N\cup\{0\}$ with  $j+lk>i_\delta$, and $0<\delta<\delta_0$,
		\begin{align*}
			&\quad  \frac{\caps(\{{\bf 1}_{E_j}^\vee =1\}\cap B_{j+lk+k-1}, 2B_{j+lk+k-1})}{\caps(B_{j+lk+k-1},2B_{j+lk+k-1})}\\
			&\overset{\eqref{eq4.1-28oct}}{\simeq_k}\, \frac{\caps(\left(I_{E_j\cap B_{j+lk+k-1}}\cup \partial^*(E_j\cap B_{j+lk+k-1}) \right)\cap B_{j+lk+k-1}, 2B_{j+lk+k-1})}{\caps(B_{j+lk},2B_{j+lk})}\\
			&\overset{\eqref{eq2.27-22nov}}{\lesssim_k}  \, 2\frac{P_s(E_j\cap B_{j+lk+k-1})} {\caps(B_{j+lk},2B_{j+lk})}\\
			& \overset{\eqref{eq3.32-24Jan}}{\lesssim_k}   \frac{{\rm cap}_{s,1}(W\cap B_{j+lk}, 2B_{j+lk})}{\caps(B_{j+lk},2B_{j+lk})}\\
			&\overset{ \eqref{weakCartan-3}}{\leq}\delta.
		\end{align*}
		It follows that for $j=0,\ldots,k-1$,
		\[
		\lim_{l\to\infty}  \frac{\caps(\{{\bf 1}_{E_j}^\vee =1\}\cap B_{j+lk+k-1}, 2B_{j+lk+k-1})}{\caps(B_{j+lk+k-1},2B_{j+lk+k-1})}=0.
		\]
		By Lemma \ref{lem4.4-29oct}, this gives that for $j=0,\ldots,k-1$,
		\[
		\lim_{r\to 0^+} \frac{\caps(\{{\bf 1}_{E_j}^\vee =1\}
			\cap B(x,r), B(x,2r))}{\caps(B(x,r),B(x,2r))}=0,
		\]
		which implies that for $j=0,\ldots,k-1$, $\{ {\bf 1}_{E_j}^\vee>0\}$ is $s$-thin at $x$.
		By the subadditivity of ${\rm cap}_{s,1}$, and the fact that $$\left\{\max_{0\leq j\leq k-1} {\bf 1}_{E_j}^\vee >0\right\}\subset \bigcup_{0\leq j\leq k-1}\{ {\bf 1}_{E_j}^\vee>0 \},$$
		we conclude that
		\[
		\left\{\max_{0\leq j\leq k-1} {\bf 1}_{E_j}^\vee >0\right\}
		\]
		is $s$-thin at $x$, which is \ref{e-item-}.
	\end{proof}
	
	\subsection{Proof of Theorem \ref{thm3.17-26nov}}\

	Let $(X,d,\mu)$ be a metric measure space.
	A function $u:X\to[-\infty,\infty]$ is said to
	be \emph{$C_{s,1}$-quasicontinuous} if for every $\varepsilon>0$, there exists an open set
	$G\subset X$ such that $C_{s,1}(G)<\varepsilon$ and $u|_{X\setminus G}$ is continuous. 
	
	A point $x\in X$ is said to be a \emph{Lebesgue point} of a locally integrable function $f:X\to[-\infty,\infty]$
	if 
	\begin{equation}\label{eq3.34-14dec}
		\lim_{r\to 0^+}\dashint_{ B(x,r)}|f-
		{ f(x)}
		|\,d\mu=0.
	\end{equation}
	We denote by $\mathcal N_{f}$ the set of non-Lebesgue points of $f$.
	The Lebesgue representative $f^*$ of $f$ at $x\in X$ is defined by
	\[
	\limsup_{r\to0^+}\dashint_{B(x,r)}f(y)\,dy=:f^*(x).
	\]

	\begin{proposition}\label{prop:quasi.-rep.}
		
		Let  $f\in W^{s,1}(X)$.
		Then $C_{s,1}(\mathcal N_f)=0$ and 
		the Lebesgue representative  $f^*$ is $C_{s,1}$-quasicontinuous.
	\end{proposition}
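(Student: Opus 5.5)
We follow the standard capacitary scheme: approximate $f$ by continuous functions, then combine a weak-type capacitary estimate for a fractional maximal-type operator with a telescoping argument.

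\textbf{Step 1 (density).} Lipschitz functions with bounded support are dense in $W^{s,1}(X)$; this is standard in complete doubling spaces via discrete convolutions. Choose Lipschitz $f_i$ with
\[
\|f-f_i\|_{W^{s,1}(X)}\le 4^{-i}.
\]

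\textbf{Step 2 (capacitary weak-type estimate).} For $g\in W^{s,1}(X)$ we prove
\[
C_{s,1}(\{M g>\lambda\})\lesssim_s \lambda^{-1}\|g\|_{W^{s,1}(X)}
\]
for a suitable maximal operator $M$ dominating $\limsup_{r\to0}\dashint_{B(x,r)}|g|\,d\mu$, for instance the discrete maximal function $M g:=\sup_j |g|_j$ built from discrete convolutions $|g|_j$ at scales $2^{-j}$. Such a maximal function is lower semicontinuous, so the superlevel set $\{Mg>\lambda\}$ is open, and the key inequality to establish is
\[
\|Mg\|_{\dot W^{s,1}(X)}\lesssim_s \|g\|_{W^{s,1}(X)}.
\]
Granting this, $\min\{1,Mg/\lambda\}$ is an admissible test function (it equals $1$ on the open set $\{Mg>\lambda\}$), and the capacity bound follows. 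This boundedness of a maximal operator on the fractional space $\dot W^{s,1}$, with $p=1$, is the main obstacle. It cannot be done as in Sobolev spaces by maximal-function boundedness in $L^1$. I would instead try to avoid it by a level-set trick: $|g|$ with truncation and the coarea formula \eqref{eq2.7-27nov}, estimating each discrete convolution $|g|_j$ in $\dot W^{s,1}$ uniformly in $j$, and the supremum via $|\sup_j a_j-\sup_j b_j|\le\sup_j|a_j-b_j|$ combined with a scale splitting. For pairs $d(x,y)\ge 2^{-j}$ one uses the $L^1$ bound; for small $d(x,y)$ one uses the Lipschitz-type behaviour of the partition of unity, summed over $j$ geometrically.

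\textbf{Step 3 (telescoping).} Apply Step 2 with $g=f_{i+1}-f_i$ and $\lambda=2^{-i}$. Set
\[
G_i:=\{M(f_{i+1}-f_i)>2^{-i}\},\qquad U_m:=\bigcup_{i\ge m}G_i,
\]
so $U_m$ is open with $C_{s,1}(U_m)\lesssim 2^{-m}$. Off $U_m$, the functions $f_i$ converge uniformly to a continuous limit $h$, and $\dashint_{B(x,r)}|f-f_m|\,d\mu\le\sum_{i\ge m}M(f_{i+1}-f_i)(x)\le 2^{1-m}$ for small $r$. Combining this with the continuity of $f_m$ gives that every $x\notin\bigcap_m U_m$ is a Lebesgue point with $f^*(x)=h(x)$.

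Hence $\mathcal N_f\subset\bigcap_m U_m$, which has capacity $0$. Moreover $f^*|_{X\setminus U_m}=h|_{X\setminus U_m}$ is continuous with $C_{s,1}(U_m)\to0$, so $f^*$ is $C_{s,1}$-quasicontinuous.
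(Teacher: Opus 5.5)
Your architecture (Lipschitz approximation, a capacitary weak-type bound for a discrete maximal function, telescoping) is the standard one, and Step~3 is fine up to constants. The gap is in Step~2. Everything rests on $\|Mg\|_{\dot W^{s,1}(X)}\lesssim_s\|g\|_{W^{s,1}(X)}$, which you leave open, and the sketch you give would not prove it.

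\emph{Why the sketch fails.} Uniform-in-$j$ bounds on $\||g|_j\|_{\dot W^{s,1}(X)}$ give no control of $\sup_j|g|_j$. In the regime $d(x,y)\ge 2^{-j}$, an $L^1$ bound on $|g|_j$ gives, after integrating the kernel over $\{y: d(x,y)\ge 2^{-j}\}$ as in \eqref{eq2-3Lemma2}, a contribution $\simeq s^{-1}2^{js}\|g\|_{L^1(X)}$ for each $j$, and these do not sum. Bounding the supremum by $Mg(x)+Mg(y)$ instead also fails, because $\int_X\frac{d\mu(y)}{\mu(B_{x,y})d(x,y)^s}=\infty$.

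\emph{What is needed.} The estimate has to be quantitative. In this regime, insert $|g|(x)-|g|(y)$ and use the approximation rate
\[
\sum_j 2^{js}\big\||g|_j-|g|\big\|_{L^1(X)}\lesssim_s \|g\|_{\dot W^{s,1}(X)},
\]
which follows from $\||g|_j-|g|\|_{L^1(X)}\lesssim\int_X\dashint_{B(x,C2^{-j})}|g(x)-g(y)|\,d\mu(y)\,d\mu(x)$ together with a dyadic decomposition of the seminorm. In the regime $d(x,y)<2^{-j}$ you must similarly use $\sum_i(\phi_i(x)-\phi_i(y))=0$. This gives $|\,|g|_j(x)-|g|_j(y)|\lesssim 2^{j}d(x,y)\dashint_{B(x,C2^{-j})}|g-g_{B(x,C2^{-j})}|\,d\mu$, and you must keep the factor $2^jd(x,y)$ through the $y$-integration. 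With $\dashint|g|$ in place of the oscillation, the resulting sum $\sum_j 2^{js}\|g\|_{L^1(X)}$ again diverges.

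\emph{Admissibility.} $C_{s,1}$ uses the full norm $\|u\|_{L^1(X)}+\|u\|_{\dot W^{s,1}(X)}$, and $Mg$ need not be integrable. With all scales, $Mg(x)\gtrsim|x|^{-n}$ at infinity in $\mathbb R^n$ whenever $g\not\equiv 0$; with scales $\le 1$ only, integrability needs an extra argument you do not give. So $\min\{1,Mg/\lambda\}$ need not lie in $W^{s,1}(X)$. Use $\min\{1,(2Mg/\lambda-1)_+\}$ instead. It equals $1$ on the open set $\{Mg>\lambda\}$ and vanishes off $\{Mg>\lambda/2\}$. That set has measure $\lesssim\lambda^{-1}\|g\|_{L^1(X)}$, since $Mg$ is dominated by the Hardy--Littlewood maximal function. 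Its seminorm is at most $2\lambda^{-1}\|Mg\|_{\dot W^{s,1}(X)}$.

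\emph{How the paper proceeds.} The paper sidesteps all of this. It identifies $W^{s,1}(X)=M^s_{1,1}(X)$ with comparable norms by \cite[Proposition~4.1]{KYZ10}, so the two capacities are comparable. It then quotes the Lebesgue point and quasicontinuity result \cite[Theorem~8.1]{HKT17} for Haj{\l}asz--Triebel--Lizorkin functions. Your route, once both points above are carried out, would be a self-contained proof working directly with the Gagliardo-type seminorm; as written, its decisive estimate is missing.
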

	
	\begin{proof}
		By \cite[Proposition~4.1]{KYZ10}, we have $W^{s,1}(X) = M^{s}_{1,1}(X)$,
		where $M^{s}_{1,1}$ denotes the \emph{Haj{\l}asz--Triebel--Lizorkin space}.
		Moreover, the two norms  $\|f\|_{W^{s,1}(X)}$
		and $\|f\|_{M^{s}_{1,1}(X)}$ are comparable, with comparisons constants depending only
		on $s$ and the doubling constant.
		In particular, the capacity associated with the $M^{s}_{1,1}$-norm is comparable with
		$C_{s,1}$.
		By \cite[Theorem~8.1]{HKT17}, the claim holds for $M^{s}_{1,1}$-functions, and hence it also holds for $W^{s,1}$-functions.
	\end{proof}
	See also \cite[Corollary~1.2]{BBS22} for an alternative proof of the above proposition
	on compact doubling metric spaces.
	
	\begin{lemma}
		\label{lem3.16-26nov}
		Let 
		$0<R_0<\infty$.
		Let $A\subset X$ and $x\in X$ such that $A$ is $s$-thin at $x$.
		Then
		\[
		\lim_{r\to0^+}{\rm cap}_{s,1}(A\cap B(x,r), B(x, R_0))=0.
		\] 
	\end{lemma}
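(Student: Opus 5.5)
The idea is to compare the condenser capacity relative to the fixed ball $B(x,R_0)$ with the capacity relative to the shrinking ball $B(x,2r)$, and then to show that the latter is $o(1)$ because of thinness. Let $0<r<R_0/2$, and also $2r<\frac14\diam X$ if $X$ is bounded. By monotonicity of the condenser capacity in the outer set, i.e.\ \eqref{e4.2-29oct}, or directly from the definition since an admissible function for $\caps(\cdot,B(x,2r))$ is admissible for $\caps(\cdot,B(x,R_0))$, we have
\[
\caps(A\cap B(x,r),B(x,R_0))\le \caps(A\cap B(x,r),B(x,2r)).
\]
It therefore suffices to show that the right-hand side tends to zero. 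Write it as
\[
\frac{\caps(A\cap B(x,r),B(x,2r))}{\caps(B(x,r),B(x,2r))}\cdot \caps(B(x,r),B(x,2r)).
\]
The first factor tends to $0$ by $s$-thinness \eqref{eq2.8-2011}. By \eqref{eq4.1-28oct} with $\theta=1$, the second factor is comparable to $\mu(B(x,r))/r^s$.

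The obstacle is that $\mu(B(x,r))/r^s$ need not stay bounded as $r\to0$. In Example \ref{ex3.9-15dec} it blows up, since there $n+\delta<s$. So the product argument above only works when $\limsup_{r\to0}\mu(B(x,r))r^{-s}<\infty$, and the general case needs a different route.

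For the general case I would split according to whether $C_{s,1}(\{x\})$ is zero or positive.

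\emph{Case $C_{s,1}(\{x\})=0$.} Here I would argue via outer regularity and monotone limits rather than through the ratio. Fix $\varepsilon>0$. Since $\caps(\cdot,B(x,R_0))$ is outer by \eqref{eq2.8-22nov}, and $\caps(\{x\},B(x,R_0))\lesssim C_{s,1}(\{x\})=0$ by \eqref{eq4.7-29oct} together with Lemma \ref{lem4.3-29oct}, there is an admissible $u$ for $\caps(\{x\},B(x,R_0))$ with energy less than $\varepsilon$. This $u$ equals $1$ on some ball $B(x,\rho)$, so for $r<\rho$ it is admissible for $A\cap B(x,r)\subset B(x,\rho)$. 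Hence the limit is at most $\varepsilon$, and since $\varepsilon$ is arbitrary the limit is $0$. In this case thinness is not even needed.

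\emph{Case $C_{s,1}(\{x\})>0$.} By Lemma \ref{lem2.12-15dec}, $\{x\}$ is $s$-thick at $x$, and in particular $x\notin A$, because a set containing $x$ is thick there. I would then use Lemma \ref{openset} to replace $A$ by an open thin set $W\supset A$. Next, using subadditivity of the capacity over the dyadic annuli $A\cap(B_i\setminus B_{i+1})$ with $B_i=B(x,2^{-i})$, I would reduce to summing $\caps(A\cap B_i,B(x,R_0))$. Each term can be compared with $\caps(A\cap B_i,2B_i)$, but only at the price of the constant factor in \eqref{e4.3-29oct}, which does not help.

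A cleaner approach is a decreasing-limit argument. The sets $A\cap B(x,r)$ decrease to $A\cap\{x\}=\emptyset$, but condenser capacity is not continuous along decreasing sequences in general, so this fails as stated. Instead I would use the fact that positive capacity of the point forces $\mu(B(x,r))/r^s\to\infty$, consistent with \eqref{eq4.7-29oct}. In this regime I would look for a modified admissible function: take near-optimal functions $u_i$ for $\caps(A\cap B_i,2B_i)$ and glue them as $\max_{i\ge m}u_i$, bounding its energy by $\sum_{i\ge m}\|u_i\|$. The difficulty is that this sum is still weighted by $\mu(B_i)/2^{-is}$, which may diverge.

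I therefore expect the main obstacle to be exactly this: controlling the blow-up of $\caps(B(x,r),B(x,2r))$ when $C_{s,1}(\{x\})>0$. I would try first to show that in this case the thinness ratio decays fast enough along a subsequence to be summable, exploiting the freedom in Lemma \ref{lem4.4-29oct}.
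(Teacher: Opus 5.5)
Your case $C_{s,1}(\{x\})=0$ is correct and is essentially the paper's argument: outer regularity plus \eqref{eq4.7-29oct}, and thinness is indeed not needed there. The gap is the case $C_{s,1}(\{x\})>0$, which carries the real content of the lemma and which your proposal leaves open. Your sketched routes (annular subadditivity, gluing $\max_{i\ge m}u_i$, a summable subsequence of thinness ratios) all end up bounding the quantity by sums of $\caps(A\cap B(x,t),B(x,2t))$. The definition of thinness only gives that this is $o(\mu(B(x,t))/t^s)$. When $\mu(B(x,t))/t^s\to\infty$, that says nothing about the size of $\caps(A\cap B(x,t),B(x,2t))$ itself. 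Summability of the ratios along a subsequence does not help either, since they are multiplied by unbounded weights. Lemma \ref{lem4.4-29oct} only transfers thinness between scales and supplies no rate.

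Your claim that positive capacity of the point forces $\mu(B(x,r))/r^s\to\infty$ is also false. If $\mu(B(x,r))\simeq r^s$ (e.g.\ weight $|y|^{s-n}$ on $\mathbb R^n$ at the origin), then Lemma \ref{lem4.10}, outer regularity and \eqref{eq4.7-29oct} give $C_{s,1}(\{x\})>0$. That regime is in fact harmless. The left side $\caps(A\cap B(x,r),B(x,R_0))$ is monotone in $r$ and is bounded by $\caps(A\cap B(x,t),B(x,2t))$ for every $t\in[r,R_0/2]$. So your product argument settles every $x$ with $\liminf_{t\to0}\mu(B(x,t))/t^s<\infty$, and the hard case is exactly $\mu(B(x,t))/t^s\to\infty$.

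The missing idea is the paper's: replace small-scale capacity estimates by a fine-continuity argument.
\begin{enumerate}
\item Since $x\notin A$ by Lemma \ref{lem2.12-15dec}, and the claim is trivial if $x\notin\overline A$, apply Theorem \ref{weakCartan-main}. This is where the weak Harnack inequality enters.
\item It gives finitely many $u_j={\bf 1}_{E_j}\in W^{s,1}(X)$ with $\max_j u_j^\wedge\equiv1$ on $A\cap B(x,R)$, hence $\max_j u_j^\vee\equiv 1$ there, and with $u_j^\vee(x)=0$.
\item Fix $0<\varepsilon<C_{s,1}(\{x\})$. Proposition \ref{prop:quasi.-rep.} gives an open $G$ with $C_{s,1}(G)<\varepsilon$ such that each $u_j^*|_{X\setminus G}$ is continuous. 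All points of $X\setminus G$ are Lebesgue points, so $u_j^*=u_j^\vee$ there.
\item Since $\varepsilon<C_{s,1}(\{x\})$, we have $x\notin G$. Continuity at $x$ then forces $u_j^\vee<1$ on $(X\setminus G)\cap B(x,r)$ for small $r$, so $A\cap B(x,r)\subset G$ and $C_{s,1}(A\cap B(x,r))<\varepsilon$.
\item Finally, \eqref{eq4.7-29oct} turns this into $\caps(A\cap B(x,r),B(x,R_0))\lesssim_s(1+R_0^{-s})\varepsilon$.
\end{enumerate}
Positivity of $C_{s,1}(\{x\})$ is used precisely to keep $x$ outside the exceptional set $G$. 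Nothing in your sketch plays this role.
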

	\begin{proof}
		Suppose that $C_{s,1}(\{x\})=0$. Then 
		by \eqref{eq4.7-29oct} and by the outer capacity property, the claim is obtained since
		\begin{align*}
			\limsup_{r\to0^+} {\rm cap}_{s,1}(A\cap B(x,r),B(x,R_0))
			&\lesssim_s \left(1+\frac{1}{R_0^s}\right) \limsup_{r\to0^+} C_{s,1}(A\cap B(x,r))\\
			&\leq \left(1+\frac{1}{R_0^s}\right) \limsup_{r\to0^+} C_{s,1}(B(x,r))\\
			&=\left(1+\frac{1}{R_0^s}\right)\limsup_{r\to0^+} C_{s,1}(\{ x\})=0.
		\end{align*}
		We next consider the case $C_{s,1}(\{x\})>0$.
		By Lemma \ref{lem2.12-15dec}, we know that
		$x\not\in A$.
		Let $0<\varepsilon< C_{s,1}(\{x\})$ be arbitrary.
		By Theorem \ref{weakCartan-main}, there exist $R>0$ and finitely many functions
		$u_0,u_2,\ldots,u_{k-1} \in W^{s,1}(X)$, for some $k\in\mathbb N$, such that
		$\max_{0\le j\le k-1} u_j^\wedge \equiv 1$ on $A\cap B(x,R)$,
		and $u_j^\vee(x)=0$ for all $j=0,\ldots,k-1$.
		Since the functions take values between $0$ and $1$, then also
		$\max_{0\le j\le k-1} u_j^\vee \equiv 1$ on $A\cap B(x,R)$.
		
		By Proposition \ref{prop:quasi.-rep.}, there is a set $G$ such that
		$C_{s,1}(G)<\varepsilon$ and each $u^*_j|_{X\setminus G}$ is continuous;
		moreover, we can assume that all points in $X\setminus G$ are Lebesgue points
		of $u_j$ for all $j$, so that $u_j^*=u_j^{\vee}$ in $X\setminus G$.
		In particular, each $u^*_j|_{X\setminus G}$ is continuous at $x$ 
		{ and $u^*_j(x)=u_j^\vee(x)=0$,
		}
		since $0<\varepsilon< C_{s,1}(\{x\})$.
		Thus for a sufficiently small $r_0\in (0,R_0)$,  for all $0<r<r_0$ we have 
		\[
		\{ 
		{ \cup_{j=0}^{k-1}\{ u_j^\vee = 1\}}
		\} \cap B(x,r) \subset G,
		\]
		and so 
		\[
		C_{s,1}(
		{\cup_{j=0}^{k-1}\{ u_j^\vee = 1\}\cap B(x,r)}
		) \leq C_{s,1}(G) <\varepsilon.
		\]
		Since 
		{$\max_{0\leq j\leq k-1} u_j^\vee\equiv 1$}
		on $A\cap B(x,R)$,  the above gives that $C_{s,1}(A\cap B(x,r))<\varepsilon$ for all $0<r<r_0$.
		We obtain from \eqref{eq4.7-29oct} that
		\[
		\ {\rm cap}_{s,1}(A\cap B(x,r),B(x,R_0))
		\lesssim_s \left(1+\frac{1}{R_0^s}\right)\varepsilon.
		\] 
		Letting $\varepsilon\to 0$, we obtain the  claim. 
	\end{proof}
	
	\begin{proof}[Proof of Theorem \ref{thm3.17-26nov}]
		Let $0<R_0<\frac{1}{4}\diam X$ be arbitrary.
		By Lemma \ref{lem3.16-26nov}, we have 
		$$\lim_{r\to0^+}\caps(A\cap B(x,r), B(x, R_0))=0.$$
		Then there exist a decreasing sequence $r_j$ with $r_j\to0^+$ and a sequence of open sets $U_j$  with $A\cap B(x,r_j)\subset U_j$
		such that 
		\begin{equation}\notag
			{\rm cap}_{s,1}(U_j, B(x,R_0))< \frac{\varepsilon}{j\cdot 2^j}.
		\end{equation}
		By Lemma \ref{lemma4.6}, for each $j$ there is 
		a $\mathcal K_{U_j,0}(B(x,R_0))$-solution  $f_j$ such that $0\leq f_j\leq 1$,
		$f_j=0$ on $X\setminus B(x,R_0)$ and  $f_j=1$ in $U_j$,
		and $\|f_j\|_{\dot W^{s,1}(X)}\leq {\rm cap}_{s,1}(U_j, B(x,R_0))$.
		By the Sobolev-type inequality \eqref{isoperimetric-func.}, it follows that 
		\begin{align*}
			\|f_j\|_{W^{s,1}(X)}
			&=\|f_j\|_{L^1(X)}+\|f_j\|_{\dot W^{s,1}(X)}\\
			&\lesssim_{R_0} \|f_j\|_{\dot W^{s,1}(X)}\\
			&\leq  \caps(U_j, B(x,R_0))\\
			&\leq \frac{\varepsilon}{j\cdot 2^j}.  
		\end{align*}
		{For $y\in X$, we set
			\[
			f(y):= \begin{cases}
				\sum_{j\in\mathbb N} j\cdot f_j(y) &\text{\ if $y\neq x$},\\
				0	&\text{\ if $y= x$}.
			\end{cases}
			\]
			Since $W^{s,1}(X)$ is a Banach space by \cite[Remark 9.8]{BBS22}, we obtain that  $f\in W^{s,1}(X)$.
		}
		It follows that $f=0$ on $X\setminus B(x,R_0)$, {  $0\leq f<\infty$ on $X$, and  $f(y)\geq j$ for $y\in A\cap B(x,r_j)\setminus \{x\}$.}
		Moreover, $f\in \mathcal K_{f,0}(B(x,R_0))$ because $f=0$ on $X\setminus B(x,R_0)$, $f\in W^{s,1}(X)$, {and $0\leq f<\infty$ on $X$}. Thus $\mathcal K_{f,0}(B(x,R_0))\neq\emptyset$, and we obtain from
		Lemma \ref{prop2.4-22nov}
		that there is an $s$-superminimizer $u\in\mathcal K_{f,0}(B(x,R_0))$ on $B(x,R_0)$  such that $u\in W^{s,1}(X)$, $u\geq f$ a.e. on $B(x,R_0)$ and $u=0$ a.e. on $X\setminus B(x,R_0)$, and so
		$
		\lim_{A\ni y\to x}u^\wedge (y)=\infty.
		$
		
		It suffices to prove that $u^\vee(x)<\infty$.
		Since $C_{s,1}(\{x\})>0$, Proposition~\ref{prop:quasi.-rep.} yields that $x$ is a Lebesgue point of $u$, so that $u^\wedge(x)=u^\vee(x)=u^*(x)\in\mathbb R$, and the desired result follows.
	\end{proof}

	
	\
	
	\
	
	\

	{\bf Conflict of interest statement}: None declared.
	
	{\bf Data availability}: 
	No data was used for the research described in the article.
	
	
	\newcommand{\etalchar}[1]{$^{#1}$}


\begin{thebibliography}{99}
		
		
		
		\bibitem[BB11]{BB11}A. Bj\"orn and J. Bj\"orn,
		\textit{ Nonlinear potential theory on metric spaces},
		European Mathematical Society (EMS), Zürich, 2011, xii+403 pp.
		
		\bibitem[BB23]{BB23}A. Bj\"orn and J. Bj\"orn,
		\textit{Sharp Besov capacity estimates for annuli in metric spaces with doubling measures}, 
		Math. Z. 305 (2023), no. 3, Paper No. 41, 26 pp.
		
		\bibitem[BBL18]{BBL18}A. Bj\"orn, J. Bj\"orn, and V. Latvala,
		\textit{The Cartan, Choquet and Kellogg properties for the fine topology on metric spaces},
		J. Anal. Math, 135 (2018), 59--83. 
		
		\bibitem[BBL15]{BBL15}A. Bj\"orn, J. Bj\"orn, and V. Latvala,
		\textit{The weak Cartan property for the $p$-fine topology on metric spaces},
		Indiana University Mathematics Journal 64, no. 3 (2015), 915--941.
		
		\bibitem[BBS22]{BBS22}A. Bj\"orn, J. Bj\"orn, and N. Shanmugalingam,
		\textit{Extension and trace results for doubling metric measure spaces and their hyperbolic fillings},
		J. Math. Pures Appl. 159 (2022) 196--249.
		
		\bibitem[CRS10]{CRS10}L. Caffarelli, J.-M.  Roquejoffre, and O. Savin,
		\textit{Nonlocal minimal surfaces},
		Comm. Pure Appl. Math. 63 (2010), no. 9, 1111--1144.
		
		\bibitem[DKP16]{DKP16}A. Di Castro, T. Kuusi, and G. Palatucci,
		\textit{Local behavior of fractional  $p$-minimizers},
		Ann. Inst. H. Poincaré C Anal. Non Linéaire 33 (2016), no. 5, 1279--1299.
		
		\bibitem[DLV23]{DLV23}B. Dyda, J. Lehrb\"ack, and A. V\"ah\"akangas,
		\textit{Fractional Poincar\'e and localized Hardy inequalities on metric spaces},
		Adv. Calc. Var. 16, (2023) no. 4, 867--884.
		
		\bibitem[Ha03]{Ha03}P. Hajłasz,
		\textit{Sobolev spaces on metric-measure spaces},
		Heat kernels and analysis on manifolds, graphs, and metric spaces (Paris, 2002), 173--218.
		Contemp. Math., 338,
		American Mathematical Society, Providence, RI, 2003.
		
		\bibitem[HK00]{HK00}P. Hajłasz and P.  Koskela,
		\textit{Sobolev met Poincar\'e},
		Mem. Amer. Math. Soc. 145 (2000), no. 688, x+101 pp.
		
		\bibitem[HKT17]{HKT17}T. Heikkinen, P. Koskela, and H. Tuominen,
		\textit{Approximation and quasicontinuity of Besov and Triebel-Lizorkin functions},
		Transactions of the American Mathematical Society (2017), 369(5), 3547--3573.
		
		\bibitem[HKST15]{HKST15}J. Heinonen, P. Koskela, N. Shanmugalingam, and J. Tyson,
		\textit{Sobolev spaces on metric measure spaces. An approach based on upper gradients},
		New Mathematical Monographs, 27. Cambridge University Press, Cambridge, 2015. xii+434 pp.
		
		\bibitem[HKM06]{HKM06}J. Heinonen, T. Kilpeläinen, and O. Martio,
		\textit{Nonlinear Potential Theory of Degenerate Elliptic Equations},
		2nd ed., Dover, Mineola, NY, 2006. 
		
		\bibitem[Kl25]{Kl25}J. Kline,
		\textit{Regularity of sets of finite fractional perimeter and nonlocal minimal surfaces in metric measure spaces},
		J. Geom. Anal. 35 (2025), no. 6, Paper No. 182, 44 pp.
		
		\bibitem[KLLZ25]{KLLZ25}J. Kline, P. Lahti, J. Li, and X. Zhou,
		\textit{Geometric inequalities related to fractional perimeter: fractional Poincaré, isoperimetric, and boxing inequalities in metric measure spaces},
		arXiv:2511.04187.
		
		\bibitem[KYZ11]{KYZ10}P. Koskela, D. Yang, and Y. Zhou,
		\textit{Pointwise characterizations of Besov and Triebel–Lizorkin spaces and quasiconformal mappings}, Advances in Mathematics, Volume 226, Issue 4, 2011, Pages 3579--3621.
		
		\bibitem[La20]{La20}P. Lahti,
		\textit{Superminimizers and a weak Cartan property for p = 1 in metric spaces},
		J. Anal. Math. 140 (2020), no. 1, 55--87.	
		
		
		
		\bibitem[MZ97]{MZ}J. Mal\'{y} and W. Ziemer,
		\textit{Fine regularity of solutions of elliptic partial differential equations},
		Mathematical Surveys and Monographs, 51. American Mathematical Society, Providence, RI, 1997. xiv+291 pp.
		
		
		\bibitem[Sh00]{Sh00}N. Shanmugalingam,
		\textit{Newtonian spaces: an extension of Sobolev spaces to metric measure spaces},
		Rev. Mat. Iberoamericana 16 (2000), no. 2, 243--279.
		
		\bibitem[Vi91]{Vi91}A. Visintin,
		\textit{Generalized coarea formula and fractal sets},
		Japan J. Indust. Appl. Math. 8 (1991), 175--201
		
	\end{thebibliography}
\end{document}